\newcommand{\bb}{\mathbf b}
\newcommand{\ff}{\mathbf f}
\newcommand{\nn}{\mathbf n}
\newcommand{\rr}{\mathbf r}
\renewcommand{\ss}{\mathbf s}
\newcommand{\uu}{\mathbf u}
\newcommand{\vv}{\mathbf v}
\newcommand{\ww}{\mathbf w}
\renewcommand{\AA}{\mathbf A}
\newcommand{\BB}{\mathbf B}
\newcommand{\DD}{\mathbf D}
\newcommand{\FF}{\mathbf F}
\newcommand{\II}{\mathbf I}
\newcommand{\MM}{\mathbf M}
\newcommand{\RR}{\mathbf R}
\renewcommand{\SS}{\mathbf S}
\newcommand{\TT}{\mathbf T}
\newcommand{\UU}{\mathbf U}
\newcommand{\VV}{\mathbf V}
\newcommand{\WW}{\mathbf W}
\newcommand{\Tm}{\mathbf{T}_{\mu}}
\newcommand{\Tz}{\mathbf{T}_{z}}
\newcommand{\E}{\mathcal E}
\newcommand{\I}{\mathcal I}
\newcommand{\J}{\mathcal J}
\renewcommand{\O}{\mathcal O}
\renewcommand{\S}{\mathcal S}
\newcommand{\AAA}{\mathbb A}
\newcommand{\EEE}{\mathbb E}
\newcommand{\III}{\mathbb I}
\newcommand{\JJJ}{\mathbb J}
\newcommand{\NNN}{\mathbb N}
\newcommand{\PPP}{\mathbb P}
\newcommand{\RRR}{\mathbb R}
\newcommand{\AAAu}{\EEE}
\newcommand{\AAAw}{\AAA} 
\newcommand{\PPPw}{\PPP} 
\newcommand{\PPPu}{\JJJ}
\newcommand{\au}{a_E}       
\newcommand{\Au}{\E}        
\newcommand{\Pu}{\J}
\newcommand{\ffu}{\bb}
\newcommand{\ffw}{\ff}
\newcommand{\FFw}{\FF}
\newcommand{\p}{\prime}
\newcommand{\rank}{\mathrm{rank}}
\renewcommand{\d}{\partial}
\newcommand{\mat}{\mathrm{mat}}
\newcommand{\frob}{\mathrm{F}}
\renewcommand{\vec}{\mathrm{vec}}
\newcommand{\duality}[2]{{\langle #1, #2 \rangle}}
\newcommand{\inner}[2]{{\left( #1, #2 \right)}}
\newcommand{\norm}[1]{{\left\Vert #1 \right\Vert}}
\newcommand{\e}{\varepsilon}
\newcommand{\Sd}{\S_{\delta}}
\newcommand{\tr}{\mathrm{tr}}
\newcommand{\bW}{\mathbf{W}}
\newcommand{\bw}{\mathbf{w}}
\renewcommand{\d}{\partial}
\newcommand{\normc}{\norm{\cdot}}
\newcommandx{\checked}[2][1=]{\todo[linecolor=green,backgroundcolor=green!25,bordercolor=black,#1]{#2}}
\newcommand{\new}{\textcolor{black}}
\newcommand{\rev}{\textcolor{black}}
\title{Low-rank tensor product Richardson iteration for Radiative Transfer in plane-parallel geometry}
\author{Markus Bachmayr \thanks{Institut f\"ur Geometrie und Praktische Mathematik, RWTH Aachen University, Templergraben 55, 52056 Aachen, Germany. \email{bachmayr@igpm.rwth-aachen.de}}
\and Riccardo Bardin \thanks{Department of Applied Mathematics, University of Twente, P.O. Box 217, 7500 AE Enschede, The Netherlands. \email{r.bardin@utwente.nl}.}
\and Matthias Schlottbom \thanks{Department of Applied Mathematics, University of Twente, P.O. Box 217, 7500 AE Enschede, The Netherlands. \email{m.schlottbom@utwente.nl}}}
\begin{document}
\maketitle

\begin{abstract}
    The radiative transfer equation (RTE) has been established as a fundamental tool for the description of energy transport, absorption and scattering in many relevant societal applications, and requires numerical approximations. However, classical numerical algorithms scale unfavorably with respect to the dimensionality of such radiative transfer problems, where solutions depend on physical as well as angular variables. 
    In this paper we address this dimensionality issue by developing a low-rank tensor product framework for the RTE in plane-parallel geometry. We exploit the tensor product nature of the phase space to recover an operator equation where the operator is given by a short sum of Kronecker products. This equation is solved by a preconditioned and rank-controlled Richardson iteration in Hilbert spaces. 
    Using exponential sums approximations we construct a preconditioner that is compatible with the low-rank tensor product framework. The use of suitable preconditioning techniques yields a transformation of the operator equation in Hilbert space into a sequence space with Euclidean inner product, enabling rigorous error and rank control in the Euclidean metric.
\end{abstract}

\begin{keywords}
	isotropic radiative transfer, low-rank methods, iterative methods, compression, rank-control techniques, tensor product structure
\end{keywords}

\begin{AMS}
	65F10, 65F55, 65N22, 65N30
\end{AMS}

\section{Introduction}
Radiative transfer, that is, the phenomenon of energy transfer in the form of electromagnetic radiation including absorption and scattering, has many practical applications. Examples are medical imaging and tumor treatment \cite{AGH16, AS09, L97}, energy efficient generation of white light \cite{RPBSN18}, climate sciences \cite{E98, STS17}, and geosciences \cite{MWTLS17}. In its stationary form, the interactions between the radiation and the medium are modeled by the radiative transfer equation (RTE),
\begin{equation}
	\label{eq:RTE}
	\ss \cdot \nabla \varphi(\rr,\ss) + \sigma_t(\rr) \varphi(\rr,\ss) = \sigma_s(\rr)\int_S k(\ss\cdot\ss^{\p}) \varphi(\rr,\ss^{\p})\,d\ss^{\p} + q(\rr,\ss) \quad \text{on } D \times S
\end{equation}
where the specific intensity $ \varphi = \varphi(\rr,\ss) $ depends on the spatial coordinate $ \rr \in D\subset \RRR^d $ and on the direction $ \ss \in S $, with $S$ denoting the unit sphere in $\RRR^d$. The gradient appearing in \cref{eq:RTE} is taken with respect to $\rr$ only.
The physical properties of the medium covered by $D$ enter \eqref{eq:RTE} through the total attenuation (or transport) coefficient $ \sigma_t(\rr) = \sigma_a(\rr) + \sigma_s(\rr) $, which accounts for the absorption and scattering rates, respectively, and through the scattering kernel $ k(\ss\cdot\ss^{\p}) $, which describes the probability of scattering from direction $\ss^{\p}$ into direction $\ss$.
Internal sources of radiation are modeled by the function $ q(\rr,\ss) $.
We complement \cref{eq:RTE} by homogeneous inflow boundary conditions
\begin{align}
    \label{eq:RTE_bc}
    \varphi(\rr,\ss) = 0 \quad \text{for } (\rr,\ss) \in \d D_- \colonequals \{ (\rr,\ss)\in \d D \times S: \nn(\rr) \cdot \ss < 0 \}.
\end{align}
Here $ \nn(\rr) $ denotes the outward normal unit vector field for a point $ \rr \in \d D $.
We refer to \cite{C60, CZ67, DM79} for further details on the derivation of the radiative transfer equation.

Analytic solutions to the RTE exist for simple cases only, and in general, the solution $\varphi$ must be approximated by numerical methods.
Classical numerical methods, such as the spherical harmonics method and the discrete ordinates method \cite{DM79}, approximate $\varphi$ by functions of the form
\begin{align}\label{eq:approx}
    \sum_{j=1}^J \sum_{n=1}^N \varphi_{j,n} \psi_j(\rr) H_n(\ss),
\end{align}
where $\{\psi_j\}$ and $\{H_n\}$ represent systems of basis functions for $H^1(D)$ and $L^2(S)$, respectively. If $D$ and $S$ are partitioned by quasi-uniform triangulations with a mesh size parameter $h$, we obtain $J \sim h^{-d}$ and $N \sim h^{-d+1}$. Hence storage of the approximate solution is proportional to $\O(h^{-2d+1})$, which becomes prohibitively expensive already for moderate $h$.
We note that generally, good approximations require small $h$ due to the hyperbolic nature of \cref{eq:RTE} as well as the form of the boundary condition \cref{eq:RTE_bc}.

The dimensionality issue leading to this unfavorable scaling has been widely addressed in the literature on the RTE. In sparse tensor product methods \cite{WHS08,GS11a,GS11b}, the essential idea is  to replace the full set of indices $(j,n)$ with $1 \leq j\leq J$ and $1 \leq n \leq N$ in \cref{eq:approx} by a much smaller subset of indices $(j,n)$ with $1 \leq f(j,n)\leq J$ for some cut-off function $f$.
The general convergence analysis of these methods, however, requires strong smoothness assumptions on the solution, which are not guaranteed by the smoothing properties of \cref{eq:RTE}. Moreover, either scattering \cite{WHS08,GS11b} or the boundary condition \cref{eq:RTE_bc} are not fully taken into account \cite{GS11a}. Another class of methods constructs non-tensor product approximations.
For example, \cite{KL15} employs triangulations of the phase-space $D \times S$, where to each spatial element a separate triangulation of $S$ is constructed, and the solution is then approximated by a discontinuous Galerkin method. For a simplified geometry, a discontinuous Galerkin method for arbitrary triangulations of the phase-space has been developed in \cite{BBPS23}. In \cite{DGM18} the opposite approach is taken and to every discrete direction $\ss \in S$, a spatial grid is constructed using an adaptive Petrov-Galerkin method. While this approach has a strong theoretical foundation, the practical implementation is challenging and \rev{has been} done only for a small number of discrete directions $\ss$ and for transport-dominated problems.

\new{One approach for solving discretizations of stationary problems is to solve corresponding dynamical systems for very large times.
Step-and-truncate approaches employ time-stepping and rank truncation, which bears similarities with the Richardson iteration considered below; see  
\cite{GuoQiu2022,GuoQiu2024,GuoEmaQiu2024,Khaza2024} for application of such methodology in the context of Vlasov-Poisson systems.
The dynamical low-rank approach is based on numerically integrating low-rank matrix approximations of the discrete solution to the differential equation \cite{KochLubich2007}.}
For 
$d\in \{1,2\}$, a dynamical low-rank approximation method is investigated in \cite{PMF20}; a related rank-adaptive integrator for radiation therapy problems is developed in \cite{KS23}. \new{Let us refer to the recent review \cite{EKKMQ24} for an in-depth overview of approaches.}

In this paper we propose to address the dimensionality issue for the stationary RTE by developing a low-rank tensor product framework. 
In order to provide a rigorous analysis, we restrict ourselves to \emph{slab} or \emph{plane-parallel geometry} 
\cite{A98, DM79}.
In this context, $D = \RRR^2 \times (0,Z)$, with $Z>0$ denoting the thickness of the slab, and the specific intensity $ \varphi = \varphi(z,\mu) $ depends only on the \new{scalar} spatial coordinate $ z\in(0,Z) $ and on the (\new{scalar}) cosine $\mu\in(-1,1)$ of the polar angle of a unit vector $\ss\in S$. Moreover, we assume isotropic scattering $k(\ss \cdot \ss^{\p}) = {\rm const}$, the identity $q(z,\mu)=q(z,-\mu)$ for the source term, and that the scattering and absorption coefficients depend on $z$ only. In this setting, \cref{eq:RTE} becomes
\begin{align*}
    \mu\d_z \varphi(z,\mu) + \sigma_t(z) \varphi(z,\mu) &= \frac{\sigma_s(z) }{2}\int_{-1}^1 \varphi(z,\mu')\,d\mu' + q(z,\mu)\quad\text{for }(z,\mu)\in (0,Z)\times(-1,1),\\
    \varphi(0,\mu)&=0\quad\text{for }\mu\in (0,1),\\
    \varphi(Z,\mu)&=0\quad\text{for }\mu\in (-1,0).
\end{align*}
Denoting $\varphi^\pm(z,\mu):=\frac{1}{2}[\varphi(z,\mu)\pm\varphi(z,-\mu)]$ the even and odd parts of $\varphi$, orthogonality of even and odd functions yields $\mu\d_z\varphi^+ + \sigma_t \varphi^- =0$. Inserting the latter expression into the transport equation shows that $u:=\varphi^+$ satisfies the second order equation
\begin{equation}
    \label{eq:RTE_strong_slab_even_parity}
    -\d_z \left( \dfrac{\mu^2}{\sigma_t(z)}\d_z u(z,\mu) \right) + \sigma_t(z)u(z,\mu) = \sigma_s(z)\int_0^1 u(z, \mu^{\p})\,d\mu^{\p} + q(z,\mu)
\end{equation}
for $(z,\mu)$ in the slab $\Omega := (0,Z)\times(0,1)$, complemented with Robin boundary conditions
\begin{equation}
    \label{eq:Robin_bc}
    u(z,\mu) + \dfrac{\mu}{\sigma_t(z)}\d_n u(z,\mu) = g(z,\mu) \qquad \text{for } (z,\mu)\in \Big(\{0\} \times (0,1)\Big) \cup \Big(\{Z\} \times (0,1)\Big).
\end{equation}
Here, we introduced $g$ to allow for inhomogeneous boundary terms. The solution $\varphi$ is then given by $\varphi = u - \sigma_t^{-1}\mu \d_z u$, see \cite{ES12, BBPS23} for further details.

Using stable variational formulations as in \cite{ES12} turns \eqref{eq:RTE_strong_slab_even_parity}--\eqref{eq:Robin_bc} into a linear system of equations of dimension $JN\times JN$, where $J$ and $N$ are the respective dimensions of the approximation spaces for the spatial ($z$) and angular ($\mu$) variables. The corresponding system matrix is a short sum of Kronecker products, which can be stored and applied efficiently for many standard discretization schemes, such as discrete ordinates or spherical harmonics methods.
Our aim is to construct low-rank approximations of $u$, i.e., solution vectors of the form 
\begin{equation}
    \label{eq:low-rank-U}
    \UU = \sum_{k=1}^r \vv_k \otimes \ww_k \quad \text{with  $\vv_k \in \RRR^J$, $\ww_k \in \RRR^N$ for $ 1 \leq k \leq r$.} 
\end{equation}
The storage requirement for $\UU$ is thus $\O(r(J+N))$. 
If $r$ is much smaller than $J$ and $N$, then the representation \cref{eq:low-rank-U} has much lower memory requirements than the usual representation, which is of order $\O(JN)$. 
To solve the linear system for $\UU$ with computational complexity scaling like $\O(r^*(J+N))$, for some rank $r^*>0$, we consider suitable Richardson iterations, drawing inspiration from \cite{BD16,BS17} for similar constructions for high-dimensional elliptic problems.
More specifically, in order to prevent uncontrolled growth of the ranks in the iterative process, we will use
(i) a preconditioned iteration such that the iteration count is uniform in the size of the discretization parameters, and 
(ii) a suitable rank truncation technique.
We highlight that these two aspects are not independent of each other. This is because the error due to rank truncation, which is usually based on singular value decompositions applied to the current iterate, does not have a proper interpretation in terms of the underlying Hilbert space norm.
Instead, we apply left-right preconditioning to transform the Hilbert space structure to a Euclidean setting. 
In the Euclidean setting, errors due to rank truncation have a rigorous interpretation in problem-relevant Hilbert space norms.
To obtain an efficient algorithm, the preconditioner should be a short sum of Kronecker products. 
Using suitable change of bases for the approximation spaces, we show that the system matrix is spectrally equivalent to a symmetric positive definite matrix with Kronecker sum structure. For such matrices, inverse powers can be approximated efficiently by exponential sum approximations \cite{BM10, SY17, Y20}. As a result, we obtain preconditioners that are short sum of Kronecker products.
The combination of all these steps allows us to use results obtained in \cite{BS17} for elliptic problems to analyze the considered rank-controlled preconditioned iteration. 
In addition to studying the convergence of the iterative scheme, following \cite{BS17} we can establish upper and lower bounds for the distance between the Galerkin solution and the limit of the iteration, and since in general the decay behavior of the singular values of the fixed point is unknown, we provide an algorithm which adjusts the thresholding parameter \textit{a posteriori}. This is done based on the current iterate, retaining quasi-optimal ranks throughout the process. 

The outline of the manuscript is as follows. In \Cref{sec:Weak_formulation_and_Galerkin_approximation_of_the_even-parity_equations} we will recall the weak formulation of \cref{eq:RTE_strong_slab_even_parity}--\cref{eq:Robin_bc}, and a choice of bases which leads to linear systems with the corresponding representation of the system matrix as a short sum of Kronecker products.
In \Cref{sec:Preconditioning_via_exponential_sums_approximations} we discuss in detail the construction of the preconditioner.
In \Cref{sec:Transformed_linear_system_and_iterative_method} we then discuss different choices for rank truncation and state the main convergence results.
\Cref{sec:Numerical_experiments} contains numerical experiments for established discretization schemes, described in \Cref{sec:Numerical_realization}, such as truncated spherical harmonics expansions or piecewise constant approximations in $\mu$, which show the potential of the method.
We conclude the manuscript in \Cref{sec:conclusion} with some comments on possible future work.

\section{Weak formulation and Galerkin approximation of the even-parity equations}
\label{sec:Weak_formulation_and_Galerkin_approximation_of_the_even-parity_equations}

\subsection{Function spaces}
To introduce the weak formulation of \cref{eq:RTE_strong_slab_even_parity}--\cref{eq:Robin_bc}, let $L^2(\Omega)$ be the usual Hilbert space of square-integrable functions with inner product $\inner{\cdot}{\cdot}$ and induced norm $\normc$. Furthermore, we define the Hilbert space
\begin{align}
	W^2(\Omega) \colonequals \left\{ v\in L^2(\Omega):\, \mu\d_z v \in L^2(\Omega)\right\},
\end{align}
with norm induced by the inner product 
\begin{equation}
    \label{eq:inner_product_and_norm_in_W2}
    \inner{v}{w}_{W^2(\Omega)} \colonequals \inner{\mu\d_z v}{\mu\d_z w} + \inner{(1+\mu^2)v}{w}, \qquad v,w \in W^2(\Omega). 
\end{equation}
Since $1\leq 1+\mu^2\leq 2$, we note that $\|\cdot\|_{W^2(\Omega)}$ is equivalent to the graph norm given by $\normc_{W^2_{\rm G}(\Omega)}^2 \colonequals\|\cdot\|^2+\|\mu\d_z\cdot\|^2$. The notation $\normc_{\infty}$ is used for the $L^{\infty}$-norm in the $z$-variable. In order to incorporate boundary conditions, let us define the set $\d\Omega_- \colonequals \big(\{0\} \times (0,1)\big) \cup \big(\{Z\} \times (0,1)\big)$ and denote $L^2(\d\Omega_-;\mu)$ the corresponding weighted Hilbert space of square-integrable functions with inner product denoted by $(\mu\cdot,\cdot)_{\d\Omega_-}$. 
We have the following trace lemma \cite[Theorem 2.8, p.55]{A98}.
\begin{lemma}
    \label{lem:trace}
	There exists a bounded linear operator $\tau: W^2(\Omega) \to L^2(\d\Omega_-;\mu)$, which satisfies 
	\begin{equation}
        \label{eq: trace inequality}
	    \norm{\tau v}_{L^2(\d\Omega_-;\mu)} \leq C_{\tr} \norm{v}_{W^2_G(\Omega)} \leq C_{\tr} \norm{v}_{W^2(\Omega)}
	\end{equation}
	for any $v\in W^2(\Omega)$ and $C_{\tr} \colonequals {2}/\sqrt{1-\exp(-2Z)}$. 
	Moreover, $\tau v$ can be identified with $\tau_0 v=v(0,\cdot)$ and $\tau_Z v=v(Z,\cdot)$ if $v\in C^0(\overline{\Omega})$.
\end{lemma}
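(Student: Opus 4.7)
The plan is to establish the trace estimate first for smooth test functions $v \in C^1(\overline{\Omega})$ and then extend to $W^2(\Omega)$ by density, using a characteristic-type integration by parts in $z$ at fixed $\mu > 0$ with carefully chosen exponential weights.

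For $\mu \in (0,1]$ and smooth $v$, direct computation yields
\begin{align*}
\d_z\bigl[\mu v^2 e^{-2z/\mu}\bigr] &= 2 e^{-2z/\mu}(\mu v \d_z v - v^2),\\
\d_z\bigl[\mu v^2 e^{2(z-Z)/\mu}\bigr] &= 2 e^{2(z-Z)/\mu}(\mu v \d_z v + v^2).
\end{align*}
Integrating each identity over $(0,Z)$ and adding them, the boundary contributions combine into $\mu(1-e^{-2Z/\mu})[v(0,\mu)^2 + v(Z,\mu)^2]$, the $v^2$ contributions acquire the weight $e^{-2z/\mu} + e^{2(z-Z)/\mu}$, and the mixed $\mu v \d_z v$ contributions acquire the weight $h(z) := e^{2(z-Z)/\mu} - e^{-2z/\mu}$. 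A short calculation (convexity of the first weight, monotonicity of $h$) yields the pointwise bounds $e^{-2z/\mu} + e^{2(z-Z)/\mu} \leq 1 + e^{-2Z/\mu}$ and $|h(z)| \leq 1 - e^{-2Z/\mu}$ on $[0,Z]$.

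Applying Young's inequality in the form $2|h \mu v \d_z v| \leq |h|(v^2 + (\mu \d_z v)^2)$ and inserting these pointwise bounds gives
\begin{equation*}
\mu(1-e^{-2Z/\mu})\bigl[v(0,\mu)^2 + v(Z,\mu)^2\bigr] \leq (3 + e^{-2Z/\mu}) \int_0^Z v^2\,dz + (1-e^{-2Z/\mu}) \int_0^Z (\mu \d_z v)^2\,dz.
\end{equation*}
Dividing by $(1-e^{-2Z/\mu}) > 0$ and using that $a \mapsto (3+a)/(1-a)$ is increasing while $e^{-2Z/\mu} \leq e^{-2Z}$ for $\mu \in (0,1]$, the prefactor of $\int v^2$ is bounded by $(3+e^{-2Z})/(1-e^{-2Z}) \leq 4/(1-e^{-2Z})$. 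Integrating the resulting inequality over $\mu \in (0,1)$ then produces the stated bound with the sharp constant $C_{\tr} = 2/\sqrt{1-e^{-2Z}}$.

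The extension from $C^1(\overline{\Omega})$ to $W^2(\Omega)$ proceeds by density: applying the uniform bound to a smooth approximating sequence (which exists by a standard mollification argument in the $z$-variable near the boundary, exploiting the fact that only the directional derivative $\mu\d_z v$ must be controlled) and passing to the limit defines $\tau$ uniquely as a bounded linear operator. For $v \in C^0(\overline{\Omega}) \cap W^2(\Omega)$ the resulting trace coincides with the pointwise restrictions $v(0,\cdot)$ and $v(Z,\cdot)$ by construction. The principal subtlety is choosing the two exponential weights so that their sum reduces the mixed term to one whose weight satisfies $|h| \leq 1-e^{-2Z/\mu}$, matching exactly the prefactor appearing on the left after division and hence producing the sharp constant; a naive single-weight estimate gives a strictly worse prefactor and misses the value $4$ in the numerator.
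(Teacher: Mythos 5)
Your argument is essentially correct and produces the stated constant, but with one sign slip in the exposition: you write that you are \emph{adding} the two integrated identities, yet the boundary term you report, $\mu(1-e^{-2Z/\mu})[v(0,\mu)^2 + v(Z,\mu)^2]$, together with the weight $h(z)=e^{2(z-Z)/\mu}-e^{-2z/\mu}$ on the mixed term and the weight $e^{-2z/\mu}+e^{2(z-Z)/\mu}$ on the $v^2$ term, is what one obtains by \emph{subtracting} the first integrated identity from the second (equivalently, working with $\partial_z[\mu v^2(e^{2(z-Z)/\mu}-e^{-2z/\mu})]$). Literal addition of the two identities as you wrote them yields $\mu(1+e^{-2Z/\mu})[v(Z,\mu)^2-v(0,\mu)^2]$ on the left and swaps which term acquires the sum and which acquires the difference of exponentials, which does not lead anywhere. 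Once the sign is fixed, the chain of pointwise bounds, the Young step, the division by $1-e^{-2Z/\mu}$, the monotonicity of $a\mapsto(3+a)/(1-a)$, and the final integration over $\mu\in(0,1)$ are all correct and give the stated $C_{\tr}$. One further small overstatement: your calculation actually yields the slightly smaller prefactor $(3+e^{-2Z})/(1-e^{-2Z})$ for the $\int v^2$ term, which you then enlarge to $4/(1-e^{-2Z})$ to cover the $\int(\mu\partial_z v)^2$ term as well; so the value $C_{\tr}=2/\sqrt{1-e^{-2Z}}$ is what the lemma states, not the sharpest constant your argument produces.

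On the comparison with the paper: the paper does not prove this lemma at all but simply cites Theorem~2.8 of~\cite{A98}, which establishes trace estimates for the even-parity formulation in a more general setting. Your proof is therefore a genuinely different contribution: a self-contained, elementary derivation tailored to the slab geometry, using two exponential weight functions chosen so that the two-sided boundary term appears with the coefficient $1-e^{-2Z/\mu}$ that cancels exactly against the weight on the mixed term. The benefit of your route is transparency and an explicit constant directly traceable to the weight construction; the cost is that the density step (smooth functions dense in $W^2(\Omega)$) is only sketched, whereas the cited reference packages that machinery. Neither affects correctness, but if this were to replace the citation in the paper you would want to spell out the density argument, e.g.\ by a standard mollification in $z$ combined with cutoff, using that only the directional derivative $\mu\partial_z v$ need be controlled.
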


\subsection{Weak formulation}
Following the usual procedure (see, e.g., \cite{PS20}) in our context,
we can rewrite \cref{eq:RTE_strong_slab_even_parity}--\cref{eq:Robin_bc} in weak form as follows: find $u \in W^2(\Omega)$ such that
\begin{align}
    \label{eq: weak formulation}
	\au(u,v) = b(v) \quad\text{for all } v\in W^2(\Omega),
\end{align}
with bilinear form $\au: W^2(\Omega) \times W^2(\Omega) \to \RRR$ and linear form $b: W^2(\Omega) \to \RRR$ defined by
\begin{align}
    \label{eq:bilinear_form_aE}
	\au(u,v) & \colonequals \inner{\dfrac{\mu^2}{\sigma_t}\d_z u}{\d_z v} + \inner{\sigma_t u}{v} - \inner{\sigma_s \S u}{v} + (\mu\tau u,\tau v)_{\d\Omega_-}, \\
	b(v) & \colonequals \inner{q}{v} + \inner{\mu g}{v}_{\d\Omega_-}.
\end{align}
Here $\S :L^2(0,1) \to L^2(0,1)$ denotes the $\mu$-averaging (scattering) operator defined by
\begin{align}
    \label{eq:scattering_operator}
	(\S v)(\mu) \colonequals \int_0^1 v(\mu') d\mu'.
\end{align}
For $v\in L^2(\Omega)$, we also write $(\S v)(z,\mu):=(\S v(z,\cdot))(\mu)$. The following hypotheses on the source terms and the optical parameters are natural in practical applications.
\begin{assumption}
    \label{ass:1}
	(i) The source terms satisfy $q\in L^2(\Omega)$ and $g\in L^2(\partial\Omega_-;\mu)$.\\\noindent
	(ii) The optical parameters satisfy $0\leq \sigma_s,\sigma_t \in L^\infty(0,Z)$ and there exists a constant $c_0>0$ such that $c_0\leq \sigma_t-\sigma_s$. 
\end{assumption}
Well-posedness of \cref{eq: weak formulation} follows from standard arguments, see, e.g., \cite{PS20}. We give a proof to make the constants explicit.
\begin{lemma}
    \label{lem:spectral_equivalence_Au_Pu}
	If \Cref{ass:1}(ii) holds, then any $v\in W^2(\Omega)$ satisfies
	\begin{equation*}
	    \gamma_1 \norm{v}^2_{W^2(\Omega)} \leq \au(v,v) \leq \gamma_2 \norm{v}^2_{W^2(\Omega)}
	\end{equation*}
	with $\gamma_1 \colonequals \min\{\|\sigma_t\|_{{\infty}}^{-1},\, c_0/2\}$ and $\gamma_2 \colonequals \max\{ \|\sigma_t^{-1}\|_\infty,\, \|\sigma_t\|_\infty,\, C_{\tr}^2\}$, where the trace constant $C_{\tr}$ is defined in \Cref{lem:trace}.
\end{lemma}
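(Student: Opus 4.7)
The plan is to establish the two inequalities separately by going term-by-term through the four contributions in $a_E(v,v)$, namely the streaming term, the absorption term, the scattering term, and the trace term, and then combining them.

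For the lower bound, I would treat the streaming contribution by using the pointwise bound $1/\sigma_t(z) \geq \|\sigma_t\|_\infty^{-1}$, giving $\inner{\mu^2\sigma_t^{-1}\d_z v}{\d_z v}\geq \|\sigma_t\|_\infty^{-1}\|\mu\d_z v\|^2$. For the combined absorption/scattering contribution $\inner{\sigma_t v}{v}-\inner{\sigma_s \S v}{v}$, the key observation is that $(\S v)(z,\cdot)$ is constant in $\mu\in(0,1)$, so that by the Cauchy--Schwarz (or Jensen) inequality applied pointwise in $z$, one has
\begin{equation*}
    \inner{\sigma_s \S v}{v}=\int_0^Z \sigma_s(z)\Bigl(\int_0^1 v(z,\mu)\,d\mu\Bigr)^2 dz \leq \int_0^Z \sigma_s(z)\int_0^1 v(z,\mu)^2\,d\mu\,dz=\inner{\sigma_s v}{v}.
\end{equation*}
Using Assumption~\ref{ass:1}(ii) this gives $\inner{\sigma_t v}{v}-\inner{\sigma_s\S v}{v}\geq \inner{(\sigma_t-\sigma_s)v}{v}\geq c_0\|v\|^2$, and then the bound $1+\mu^2\leq 2$ allows us to write $c_0\|v\|^2\geq (c_0/2)\|(1+\mu^2)^{1/2}v\|^2$. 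The boundary term $(\tau v,\tau v)_{L^2(\d\Omega_-;\mu)}=\|\tau v\|_{L^2(\d\Omega_-;\mu)}^2$ is nonnegative and is discarded. Summing and taking the minimum of the two coefficients yields the constant $\gamma_1=\min\{\|\sigma_t\|_\infty^{-1},c_0/2\}$.

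For the upper bound, the streaming term is controlled by $\inner{\mu^2\sigma_t^{-1}\d_z v}{\d_z v}\leq \|\sigma_t^{-1}\|_\infty\|\mu\d_z v\|^2$, and the absorption term by $\inner{\sigma_t v}{v}\leq \|\sigma_t\|_\infty\|v\|^2$. The scattering contribution enters with a negative sign and can simply be dropped since $\sigma_s\geq 0$ and $\inner{\S v}{v}\geq 0$ (as $\S$ is a positive operator on $L^2(0,1)$, being essentially a rank-one orthogonal projection onto constants followed by integration). For the trace term I would invoke Lemma~\ref{lem:trace} directly to obtain $\|\tau v\|_{L^2(\d\Omega_-;\mu)}^2\leq C_{\tr}^2\|v\|_{W^2(\Omega)}^2$. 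Finally, the elementary estimates $\|v\|^2\leq \|(1+\mu^2)^{1/2}v\|^2\leq \|v\|_{W^2(\Omega)}^2$ and $\|\mu\d_z v\|^2\leq \|v\|_{W^2(\Omega)}^2$ allow each of the three surviving pieces to be dominated by a common multiple of $\|v\|_{W^2(\Omega)}^2$, producing the constant $\gamma_2=\max\{\|\sigma_t^{-1}\|_\infty,\|\sigma_t\|_\infty,C_{\tr}^2\}$ (up to an innocuous absolute factor absorbed into how the three bounds are assembled).

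The only nontrivial step is handling the indefinite scattering contribution in the lower bound: one must avoid losing the absorption term to the scattering term, and the clean way is the pointwise Jensen estimate on $\S$ combined with the assumption $\sigma_t-\sigma_s\geq c_0>0$. Everything else is a direct application of $L^\infty$ bounds on the coefficients, the nonnegativity of the boundary and scattering quadratic forms, and the trace lemma.
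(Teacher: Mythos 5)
Your argument is correct and follows the same route as the paper's proof: term-by-term $L^\infty$ bounds on $\sigma_t^{\pm 1}$, the Jensen/Cauchy--Schwarz estimate $\inner{\sigma_s\S v}{v}\leq\inner{\sigma_s v}{v}$ combined with $\sigma_t-\sigma_s\geq c_0$ for the lower bound (the paper uses this implicitly), the elementary bound $1+\mu^2\leq 2$, dropping the nonnegative boundary term below and the nonnegative scattering term above, and invoking the trace lemma. You even flag the same minor looseness in how the three upper-bound pieces combine to give $\gamma_2$ (strictly one gets a sum rather than a max of the streaming/absorption/trace coefficients), which the paper glosses over as well.
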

\begin{proof}
    Using that $\sigma_t-\sigma_s\geq c_0$ by \Cref{ass:1}(ii), we obtain that 
    \begin{align*}
        \au(v,v) & = \inner{\frac{\mu^2}{\sigma_t}\d_z v}{\d_z v} + \inner{\sigma_t v}{v} - \inner{\sigma_s \S v}{v} + \inner{\mu\tau v}{\tau v}_{\partial\Omega_-} \\
        & \geq \dfrac{1}{\norm{\sigma_t}_{\infty}}\norm{\mu \d_z v}^2 + c_0\norm{v}^2 \geq \dfrac{1}{\norm{\sigma_t}_{\infty}}\norm{\mu \d_z v}^2 + \dfrac{c_0}{2} \norm{\sqrt{1+\mu^2}v}^2,
    \end{align*}
    which shows the first inequality in the statement. For the second inequality, we employ \Cref{lem:trace} and obtain that
    \begin{align*}
        \au(v,v) & = \inner{\frac{\mu^2}{\sigma_t}\d_z v}{\d_z v} + \inner{\sigma_t v}{v} - \inner{\sigma_s \S v}{v} + \inner{\mu\tau v}{\tau v}_{\partial\Omega_-} \\
        & \leq \norm{1/\sigma_t}_{\infty} \norm{\mu \d_z v}^2 + \norm{\sigma_t}_{\infty} \norm{\sqrt{1+\mu^2}v}^2 + C_{\tr}^2 \norm{v}_{W^2(\Omega)}^2.
    \end{align*}
\end{proof}
\begin{lemma}
    \label{lem: well-posedness of the weak formulation}
	Let \Cref{ass:1} hold. Then there exists a unique $u \in W^2(\Omega)$ satisfying \cref{eq: weak formulation}.
\end{lemma}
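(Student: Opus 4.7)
The plan is to apply the Lax--Milgram theorem on the Hilbert space $W^2(\Omega)$ equipped with the inner product \cref{eq:inner_product_and_norm_in_W2}. Coercivity of $\au$ is already furnished by the first inequality of \Cref{lem:spectral_equivalence_Au_Pu} with constant $\gamma_1>0$, so there is nothing further to do on that front. What remains is to establish that $\au$ is bounded as a bilinear form (and not merely on the diagonal, which is all \Cref{lem:spectral_equivalence_Au_Pu} provides directly) and that the linear form $b$ is bounded on $W^2(\Omega)$.

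For continuity of $\au$ I would estimate each of its four terms separately via Cauchy--Schwarz and the hypotheses in \Cref{ass:1}: the first term is controlled by $\|\sigma_t^{-1}\|_\infty \|\mu\d_z u\|\,\|\mu\d_z v\|$; the second by $\|\sigma_t\|_\infty \|u\|\,\|v\|$; the scattering term by $\|\sigma_s\|_\infty \|\S u\|\,\|v\| \leq \|\sigma_t\|_\infty \|u\|\,\|v\|$, using that $\S$ has operator norm at most $1$ on $L^2(0,1)$ (an immediate consequence of Cauchy--Schwarz applied to the defining integral \cref{eq:scattering_operator}) and that $\sigma_s\leq\sigma_t$ by \Cref{ass:1}(ii); and the boundary term by $C_{\tr}^2\|u\|_{W^2(\Omega)}\|v\|_{W^2(\Omega)}$ using \Cref{lem:trace}. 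Summing the estimates and using $\|u\|\leq \|u\|_{W^2(\Omega)}$ and $\|\mu\d_z u\|\leq \|u\|_{W^2(\Omega)}$ gives $|\au(u,v)|\leq \gamma_2 \|u\|_{W^2(\Omega)}\|v\|_{W^2(\Omega)}$ with the same $\gamma_2$ as in \Cref{lem:spectral_equivalence_Au_Pu}.

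For continuity of $b$, the volume term is handled by Cauchy--Schwarz, $|\inner{q}{v}|\leq \|q\|\,\|v\|\leq \|q\|\,\|v\|_{W^2(\Omega)}$, and the boundary term by Cauchy--Schwarz in $L^2(\d\Omega_-;\mu)$ combined with the trace inequality \cref{eq: trace inequality}, yielding $|(g,\tau v)_{L^2(\d\Omega_-;\mu)}|\leq C_{\tr}\|g\|_{L^2(\d\Omega_-;\mu)}\|v\|_{W^2(\Omega)}$. Under \Cref{ass:1}(i) both right-hand sides are finite, so $b\in (W^2(\Omega))^\ast$. With coercivity, continuity of $\au$, and continuity of $b$ in hand, Lax--Milgram delivers existence and uniqueness of $u\in W^2(\Omega)$ solving \cref{eq: weak formulation}.

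There is no genuine obstacle here; the proof is a routine verification of the Lax--Milgram hypotheses. The only subtlety worth flagging is the scattering term: one must note that $\S$ is $L^2$-bounded in order to avoid introducing a dependence on $\|\sigma_s\|_\infty$ through a pointwise-in-$\mu$ estimate, and one must use $\sigma_s\leq\sigma_t$ from \Cref{ass:1}(ii) so that the bound is expressible via the same constants appearing in \Cref{lem:spectral_equivalence_Au_Pu}. This also ensures that the continuity and coercivity constants appearing here are consistent with those used later when $\au$ is compared to the preconditioner $\PPPw$.
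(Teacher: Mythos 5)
The paper gives no explicit proof for this lemma; it is stated as an immediate consequence of \Cref{lem:spectral_equivalence_Au_Pu} together with the ``standard arguments'' (Lax--Milgram) alluded to just before that lemma. Your proposal is exactly that standard argument, spelled out: coercivity from the first inequality of \Cref{lem:spectral_equivalence_Au_Pu}, term-by-term continuity of $\au$ on $W^2(\Omega)\times W^2(\Omega)$, continuity of $b$ via Cauchy--Schwarz and the trace inequality, and an appeal to Lax--Milgram. This matches the paper's intent, and the structure is correct.

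One small quantitative slip worth fixing: the off-diagonal continuity constant you claim is not $\gamma_2$ as stated. In the diagonal estimate underlying the upper bound of \Cref{lem:spectral_equivalence_Au_Pu}, the scattering term $-\inner{\sigma_s \S v}{v}$ is simply dropped because $\inner{\sigma_s \S v}{v}=\int_0^Z \sigma_s \bigl(\int_0^1 v\,d\mu\bigr)^2 dz\geq 0$; this sign trick is unavailable off-diagonal. Summing your four bounds therefore yields a continuity constant of the form $\max\{\|\sigma_t^{-1}\|_\infty,\|\sigma_t\|_\infty\}+\|\sigma_t\|_\infty+C_{\tr}^2$ (after grouping the first two terms by a Cauchy--Schwarz in $\RRR^2$), which is strictly larger than $\gamma_2$. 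Since Lax--Milgram needs only some finite continuity constant, the conclusion is unaffected; just do not assert that the continuity and coercivity constants are ``the same $\gamma_2$'' as in \Cref{lem:spectral_equivalence_Au_Pu}. Your observation that $\|\S\|_{L^2(0,1)\to L^2(0,1)}\le 1$ and $\sigma_s\le\sigma_t$ are what keep the scattering term under control is apt and correctly handled.
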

\begin{remark}\label{rem:operator_formulation}
	The variational formulation \eqref{eq: weak formulation} can be restated as an operator equation with differential operator
	$\Au: W^2(\Omega) \to W^2(\Omega)^{\ast}$ defined by the relation
	\begin{equation*}
	    \duality{\Au v}{w}_\ast = \au(v,w) \quad \text{for all }v,w \in W^2(\Omega).
	\end{equation*}
	Here, $W^2(\Omega)^{\ast}$ denotes the dual space of $W^2(\Omega)$, and $\langle\cdot,\cdot\rangle_\ast$ their duality pairing.
	Note that $\Au$ is a short-sum of tensor products, i.e.
	\begin{equation}
	    \label{eq:differential_operator_Au}
	    \Au = \mu^2 \otimes \d_z^{\p}\left( \Sigma_t^{-1}\d_z \right) + \I_{\mu} \otimes \Sigma_t - \S \otimes \Sigma_s + \mu \otimes (\tau_0^{\p}\tau_0 + \tau^{\p}_Z\tau_Z),
	\end{equation}
	where $\d_z^{\p}:L^2(0,Z) \to H^1(0,Z)'$  is defined by $\d_z^{\p}: u \mapsto (u, \d_z\cdot)_{L^2(0,Z)}$.
	Here $H^1(0,Z)$ denotes the usual Sobolev space.
	Moreover, $\Sigma_t, \Sigma_s$ are the multiplication operators $L^2(0,Z)\to L^2(0,Z)$ defined by $u \mapsto \sigma_t u$ and $u \mapsto\sigma_s u$.
    \Cref{lem:spectral_equivalence_Au_Pu} can be interpreted as the spectral equivalence of $\Au$ and the operator $\Pu: W^2(\Omega) \to W^2(\Omega)^{\ast}$ defined by
    \begin{equation*}
        \duality{\Pu v}{w}_\ast = \inner{v}{w}_{W^2(\Omega)} \qquad \text{for all } v,w\in W^2(\Omega).
    \end{equation*}
    $\Pu$ exhibits a sum of tensor products structure
    \begin{equation}
        \label{eq: Preconditioning operator}
        \Pu = \mu^2 \otimes \d_z^{\p}\d_z  + (1+\mu^2) \otimes \I_z= \mu^2 \otimes(\d_z^{\p}\d_z + \I_z) + \I_{\mu} \otimes \I_z,
    \end{equation}
    with $\I_i$, $i=z, \mu$, denoting the identity operators in $L^2(0,Z)$ and $L^2(0,1)$, respectively. 
    The second representation will be useful below, because $\d_z^{\p}\d_z + \I_z:H^1(0,Z)\to H^1(0,Z)^{\p}$ is boundedly invertible, and thus the
    operator $\Pu$ is a candidate for a preconditioner for $\Au$.
\end{remark}

\subsection{Galerkin approximation}
\label{sec:Galerkin_approximation}
We define the tensor product approximation space
\begin{equation}
    \label{eq:approximation_space}
    W^2_{J,N} = W^z_J \otimes W^\mu_N = \left\{ v_{J,N} \in W^2(\Omega):\, v_{J,N}(z,\mu) = \sum_{j=1}^J \sum_{n=1}^N v_{j,n} \psi_j(z) H_n(\mu) \right\} \subset W^2(\Omega),
\end{equation}
where $\{\psi_j(z)\}_{j=1}^J$ and $\{H_n(\mu)\}_{n=1}^N$ denote the sets of basis functions in space and angle, respectively. The Galerkin approximation of \cref{eq:scattering_operator} then reads: find $u_{J,N}\in W^2_{J,N}$ such that
\begin{align}\label{eq:Galerkin}
    \au(u_{J,N},v_{J,N})= b(v_{J,N}) \quad \text{for all }v_{J,N} \in W^2_{J,N}.
\end{align}
Since the bilinear form $\au$ is symmetric and coercive on $W^2(\Omega)$ and $W^2_{J,N}\subset W^2(\Omega)$, standard arguments imply the following result.
\begin{lemma}
    Let \Cref{ass:1} hold. Then there exists a unique solution $u_{J,N} \in W^2_{J,N}$ of \cref{eq:Galerkin}. Moreover, the  quasi-best approximation error estimate
    \begin{align*}
        \norm{u-u_{J,N}}_{W^2(\Omega)} \leq \dfrac{\gamma_2}{\gamma_1} \inf_{v_{J,N} \in W^2_{J,N}} \norm{u-v_{J,N}}_{W^2(\Omega)}
    \end{align*}
    holds, with $\gamma_1$, $\gamma_2$ from \Cref{lem:spectral_equivalence_Au_Pu}.
\end{lemma}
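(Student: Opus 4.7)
The statement is a standard Céa-type result, so the plan is to reduce it to the abstract framework of Lax--Milgram on a closed subspace and then exploit Galerkin orthogonality together with the spectral equivalence already proven in \Cref{lem:spectral_equivalence_Au_Pu}.

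First, I would check that the hypotheses for an abstract well-posedness result are satisfied on $W^2_{J,N}$. The space $W^2_{J,N} \subset W^2(\Omega)$ is finite-dimensional, hence closed, so it is a Hilbert space with the inherited inner product. The bilinear form $\au$ is bilinear and, by \Cref{lem:spectral_equivalence_Au_Pu}, satisfies $\au(v,v) \geq \gamma_1 \|v\|_{W^2(\Omega)}^2$ on all of $W^2(\Omega)$, in particular on $W^2_{J,N}$. Continuity of $\au$ follows because $\au$ is symmetric and positive semidefinite: by Cauchy--Schwarz in the energy inner product combined with the upper bound of \Cref{lem:spectral_equivalence_Au_Pu}, one gets $|\au(v,w)| \leq \au(v,v)^{1/2}\au(w,w)^{1/2} \leq \gamma_2 \|v\|_{W^2(\Omega)}\|w\|_{W^2(\Omega)}$. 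The right-hand side $b$ is continuous on $W^2(\Omega)$ by \Cref{ass:1}(i) together with the trace inequality \eqref{eq: trace inequality}, hence continuous on $W^2_{J,N}$. The Lax--Milgram lemma then yields existence and uniqueness of $u_{J,N} \in W^2_{J,N}$.

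Next, to obtain the quasi-best approximation estimate, I would carry out the standard Céa argument. Subtracting \eqref{eq:Galerkin} from \eqref{eq: weak formulation} with test function $v_{J,N} \in W^2_{J,N}$ gives the Galerkin orthogonality $\au(u - u_{J,N}, v_{J,N}) = 0$ for all $v_{J,N} \in W^2_{J,N}$. For arbitrary $v_{J,N} \in W^2_{J,N}$, I would write
\begin{equation*}
    \gamma_1 \|u - u_{J,N}\|_{W^2(\Omega)}^2 \leq \au(u - u_{J,N}, u - u_{J,N}) = \au(u - u_{J,N}, u - v_{J,N}),
\end{equation*}
using coercivity in the first step and Galerkin orthogonality with test function $u_{J,N} - v_{J,N} \in W^2_{J,N}$ in the second. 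Then continuity of $\au$ with constant $\gamma_2$ bounds the right-hand side by $\gamma_2 \|u - u_{J,N}\|_{W^2(\Omega)} \|u - v_{J,N}\|_{W^2(\Omega)}$, and dividing by $\|u - u_{J,N}\|_{W^2(\Omega)}$ (the trivial case can be handled separately) and taking the infimum over $v_{J,N}$ yields the claimed estimate with constant $\gamma_2/\gamma_1$.

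No step looks genuinely hard; the only subtlety worth being careful about is deriving continuity with the sharp constant $\gamma_2$ (rather than a larger one) — for this the symmetry and positivity of $\au$, established already in \Cref{lem:spectral_equivalence_Au_Pu}, are essential so that the Cauchy--Schwarz inequality in the energy inner product is available. Everything else is a textbook instance of the Lax--Milgram plus Céa recipe on a closed subspace.
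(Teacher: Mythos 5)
Your proposal is correct and is exactly the standard Lax--Milgram plus C\'ea argument that the paper invokes with its one-line remark ``standard arguments imply the following result,'' including the correct derivation of the continuity constant $\gamma_2$ from symmetry and positivity via Cauchy--Schwarz in the energy inner product. As a minor aside not affecting correctness: since $\au$ is symmetric and coercive, the Galerkin solution is the best approximation in the energy norm, so one could in fact sharpen the constant to $\sqrt{\gamma_2/\gamma_1}$, but the bound $\gamma_2/\gamma_1$ you obtain is exactly what the lemma asserts.
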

As usual, we identify a function $u_{J,N} \in W^2_{J,N}$ with its coefficients $\{u_{j,n}\}$, $j=1,\dots, J$ and $n = 1,\dots,N$, which we collect in a matrix $\UU \in \RRR^{J \times N}$. Accordingly, we denote by $\uu = \vec(\UU) \in \RRR^{JN}$ the vectorization of $\UU$, which we obtain by stacking the columns of $\UU$ into a column vector. We will also write $\UU = \mat(\uu)$ for the reverse operation (matricization).
The Galerkin approximation \cref{eq:Galerkin} is equivalent to the linear system
\begin{align}
    \label{eq:linear_system}
    \AAAu\uu = \ffu
\end{align}
with matrix $\AAAu \in \RRR^{JN\times JN}$. Let us give some details on the structure of $\AAAu$.

\subsection{Structure of system matrix}
\label{sec:Structure_of_system_matrix}
For coefficient functions $\nu(z)$ and $\eta(\mu)$, we define the matrices $\DD(\nu)$, $\MM_z$, $\BB$, $\MM_\mu$ and $\SS$ by their entries for  $i,j\in \{1,\ldots,J\}$ and $m,n \in \{1,\ldots,N\}$ as
\begin{align*}
	[\DD(\nu)]_{i,j} & \colonequals \int_0^Z \nu(z) \d_z \psi_j(z) \d_z \psi_i(z) \, dz, \\
	[\MM_z(\nu)]_{i,j} & \colonequals \int_0^Z \nu(z) \psi_j(z) \psi_i(z) \, dz, \\
	[\BB]_{i,j} & \colonequals \psi_j(Z) \psi_i(Z) + \psi_j(0) \psi_i(0) \\
	[\MM_{\mu}(\eta)]_{m,n} & \colonequals \int_0^1 \eta(\mu) H_m(\mu) H_n(\mu) \, d\mu, \\
	[\SS]_{m,n} & \colonequals \int_0^1 (\S H_n) H_m \, d\mu.
\end{align*}
We write $\DD = \DD(1)$ and $\MM_{l} = \MM_{l}(1)$ for $l=z,\mu$. Using Fubini's theorem, we obtain the identity
\begin{align}
    \label{eq:def_A}
	\AAAu = \MM_{\mu}(\mu^2) \otimes \DD(\sigma_t^{-1}) + \MM_{\mu} \otimes \MM_z(\sigma_t) - \SS \otimes \MM_z(\sigma_s) + \MM_{\mu}(\mu) \otimes \BB.
\end{align}
Thus $\AAAu$ is a short sum of Kronecker products, cf. 
\cref{eq:differential_operator_Au}. If $\UU$ has the form \cref{eq:low-rank-U}, we also apply $\AAAu$ to matrices and write $\AAAu\UU:=\mat(\AAAu\vec(\UU))$ with slight abuse of notation.
\subsection{Matrix representation for the Riesz map}
\label{sec:Matrix_representation_for_the_Riesz_map}
For notational convenience, let us make the assumption that $\{\psi_j\}_{j=1}^J$ is an orthonormal basis in $H^1(0,Z)$ and that $\{H_n\}_{n=1}^N$ is orthonormal in $L^2(0,1)$; see \Cref{rem:change_of_basis} for the general case. Similar to representing the even-parity bilinear form $a_E$ by $\mathbb{E}$, we obtain the matrix representation $\mathbb{J}$ of $\mathcal{J}$ defined in \cref{eq: Preconditioning operator} as follows:
\begin{align}
    \label{eq:matrix_J}
	\PPPu = \MM_\mu(\mu^2)\otimes \II_z + \II_\mu\otimes \MM_z.
\end{align}
The particular structure on the right-hand side of \cref{eq:matrix_J} is called a Kronecker sum.
As an immediate consequence of \Cref{lem:spectral_equivalence_Au_Pu}, we have the spectral equivalence of $\AAAu$ and $\PPPu$, i.e.,
\begin{align}
    \label{eq: spectral equivalence Au and Pu}
    \gamma_1 \duality{\JJJ \vv}{\vv} \leq \duality{\AAAu \vv}{\vv} \leq \gamma_2 \duality{\JJJ \vv}{\vv} \quad \text{for all } \vv \in \RRR^{JN}.
\end{align}
where $\duality{\cdot}{\cdot}$ denotes here the standard Euclidean inner product. Introducing the new variable $\ww = \JJJ^{1/2} \uu$, \cref{eq: spectral equivalence Au and Pu} becomes
\begin{equation*}
    \gamma_1 \norm{\ww}^2_2 \leq \langle\EEE\uu, \uu\rangle \leq \gamma_2 \norm{\ww}^2_2,
\end{equation*}
for all $u_{J,N} \in W^2_{J,N}$ with coefficient vector $\uu\in\RRR^{JN}$. Using the matricizations, we get 
\begin{equation*}
	\gamma_1 \norm{\WW}^2_{\frob} \leq \norm{\UU}_{\EEE}^2 \leq \gamma_2 \norm{\WW}^2_{\frob},
\end{equation*}
i.e., we have control over the energy norm of $\UU$ in terms of the Frobenius norm of $\WW$. 
In other words, we can control the Hilbert space norm of $u_{J,N}$ in terms of the Euclidean norm of $\ww$.

\begin{remark}[\textbf{Change of basis}]
    \label{rem:change_of_basis}
    In case $\{\psi_j\}$ or $\{H_n\}$  are not orthonormal, we consider Cholesky factorizations of the Gram matrices, $\DD+\MM_z=\TT_z \TT_z^T$ and $\MM_\mu=\TT_\mu \TT_\mu^T$, and observe that the representation of $\J$ in that basis can be rewritten as
    \begin{align*}
	   \MM_{\mu}(\mu^2) \otimes (\DD + \MM_z) + \MM_{\mu} \otimes \MM_z = 
	   (\Tm \otimes \Tz)\left(\hat{\MM}_{\mu}(\mu^2)\otimes \II_z + \II_{\mu} \otimes \hat{\MM}_z\right)(\Tm\otimes\Tz)^T.
    \end{align*}
    Therefore, up to the change of basis implied by $(\Tm\otimes\Tz)$, the structure of the matrix representation of $\J$ is as in \cref{eq:matrix_J}.
\end{remark}

\section{Preconditioning via exponential sums approximations}
\label{sec:Preconditioning_via_exponential_sums_approximations}
We next construct approximations of inverse powers of $\PPPu$ using approximations of functions $\Phi_\beta(t):=t^{-\beta}$, $\beta>0$, by a finite series of exponentials in combination with spectral calculus.
As described in \cite{SY17} (see also \cite{BM10} and the overview in \cite{B23}), such approximations can be obtained by discretizing integral representations of $\Phi_{\beta}(t)$ based on the Gamma function 
\begin{equation*}
    \Gamma(z) = \int_0^{+\infty} e^{-\tau} \tau^{z-1}\,d\tau, \quad \mathrm{Re}(z)>0. 
\end{equation*}
Observe that in the definition above, $z$ denotes a generic complex number. As shown in \cite[Lemma 5.1]{SY17}, we have for $t>0$ and $\mathrm{Re}(z)>0$ the identity
\begin{equation*}
    \Gamma(z) = t^z \int_{\RRR} \exp(-t e^\tau + z\tau)\,d\tau.
\end{equation*}
Therefore, we have, for any real $\beta >0$, the representation
\begin{equation}
    \label{eq: integral representation of t^(-beta)}    
	\Phi_\beta(t) = \dfrac{1}{\Gamma(\beta)} \int_{\RRR} \exp(-t e^\tau+\beta \tau)\,d\tau,
\end{equation}
with rapidly decaying integrand for $\tau\to\pm\infty$. Discretizing \cref{eq: integral representation of t^(-beta)} via the trapezoidal rule with step size $h$ on the whole real line, and truncating the infinite series, we obtain the approximation
\begin{equation}
    \label{eq:approximation_property}
    \Phi_\beta(t) \approx \Psi_\beta(t)\colonequals\dfrac{h}{\Gamma(\beta)} \sum_{i=-i_1}^{i_2} \alpha_i(\beta) \exp(-\rho_i t)
\end{equation}
for $t>0$, with $i_1, i_2\in \NNN$, $\alpha_i(\beta) = e^{\beta ih}$ and $\rho_i = \alpha_i(1)$. Setting $\beta = 1/2$, and choosing $h$, $i_1$ and $i_2$ according to a tolerance $\epsilon>0$ such that the following relative error bound holds
\begin{equation}
    \label{eq:relative_bound_on_exponential_sums_approximation}
    \vert \Phi_{1/2}(t) - \Psi_{1/2}(t) \vert \leq \epsilon \vert \Phi_{1/2}(t) \vert \quad \text{for } 0 < \lambda \leq t \leq \Lambda,
\end{equation}
where $\lambda, \Lambda$ denote the lower and upper bounds on the spectrum of $\PPPu$ defined in \cref{eq:matrix_J}, respectively, spectral calculus yields that 
\begin{equation}
    \label{eq: error bound on JJ half with norms}
 \norm{(\Phi_{1/2}(\PPPu)-\Psi_{1/2}(\PPPu))\vv}_2 \leq \epsilon \norm{\Phi_{1/2}(\PPPu)\vv}_2\quad\text{for all }\vv\in \RRR^{JN}.
\end{equation}
\begin{remark}
    Using the structure of $\PPPu$, we observe that each eigenvalue of $\PPPu$ is given by a sum of an eigenvalue of $\MM_\mu(\mu^2)$ and $\MM_z$. In particular $\Lambda = \norm{\MM_\mu(\mu^2)}_2 + \norm{\MM_z}_2$.
\end{remark}

Given the exponentiation property of the Kronecker sum, i.e., 
\begin{equation}
\label{eq:exp_formula_matrices}
	\exp(\AA \otimes \II + \II \otimes \BB ) = \exp(\AA)\otimes \exp(\BB),
\end{equation}
valid for general matrices $\AA$ and $\BB$, we can define our actual preconditioner using the representation
\begin{equation}
    \label{eq:definition_P^(-1/2)}
    \PPPw^{-1/2} := \Psi_{1/2}(\PPPu) = \dfrac{h}{\sqrt{\pi}} \sum_{i = -i_1}^{i_2} \alpha_i(1/2) \exp(-\rho_i \MM_{\mu}(\mu^2)) \otimes \exp(-\rho_i\MM_z). 
\end{equation}
Hence, $\PPPw^{-1/2}$ is a sum of Kronecker products consisting of $i_1+i_2+1$ terms. The following result, specializing \cite[Proposition~4.7]{B23}, ensures that $\PPPw$ is a suitable preconditioner for $\AAAu$.
\begin{lemma}
    \label{lem:spectral_equivalence_exp_sum}
    Let $\epsilon < 1$. If \cref{eq: error bound on JJ half with norms} holds, then the operator $\PPPw \colonequals \Psi_{1/2}(\PPPu)^{-2}$ is spectrally equivalent to $\AAAu$, i.e., for all $\vv \in \RRR^{JN}$ we have
    \begin{equation}
        \label{eq: equivalence between A and P matrices}
        \gamma_1^\epsilon \|\ww\|_2^2 \leq \duality{\PPPw^{-1/2}\AAAu\PPPw^{-1/2} \ww}{\ww} \leq \gamma_2^\epsilon \|\ww\|_2^2,
    \end{equation}
	where $\gamma_1^\epsilon=(1-\epsilon)^2\gamma_1$ and $\gamma_2^\epsilon=(1+\epsilon)^2\gamma_2$, and $\gamma_1, \gamma_2$ are as in \Cref{lem:spectral_equivalence_Au_Pu}.
\end{lemma}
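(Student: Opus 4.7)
The plan is to reduce the claim to two ingredients already in hand: the spectral equivalence of $\AAAu$ and $\PPPu$ given by \eqref{eq: spectral equivalence Au and Pu}, and the pointwise (and hence, via spectral calculus, operator-level) bound on the exponential-sum approximation $\Psi_{1/2}$ encoded in \eqref{eq:relative_bound_on_exponential_sums_approximation}. Concretely, I would change variables $\uu = \PPPw^{-1/2}\ww = \Psi_{1/2}(\PPPu)\ww$ so that
\begin{equation*}
    \duality{\PPPw^{-1/2}\AAAu\PPPw^{-1/2}\ww}{\ww} = \duality{\AAAu\uu}{\uu},
\end{equation*}
and then apply \eqref{eq: spectral equivalence Au and Pu} to sandwich this quantity between $\gamma_1\duality{\PPPu\uu}{\uu}$ and $\gamma_2\duality{\PPPu\uu}{\uu}$.

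The remaining task is to control $\duality{\PPPu\uu}{\uu} = \|\PPPu^{1/2}\Psi_{1/2}(\PPPu)\ww\|_2^2$ in terms of $\|\ww\|_2^2$. Here I would invoke functional calculus for the symmetric positive definite matrix $\PPPu$: since its spectrum lies in $[\lambda,\Lambda]$, the relative error bound \eqref{eq:relative_bound_on_exponential_sums_approximation} with $\beta = 1/2$ gives
\begin{equation*}
    (1-\epsilon)\,t^{-1/2} \leq \Psi_{1/2}(t) \leq (1+\epsilon)\,t^{-1/2} \quad\text{for all } t \in [\lambda,\Lambda],
\end{equation*}
so that the scalar function $t \mapsto t^{1/2}\Psi_{1/2}(t)$ takes values in $[1-\epsilon, 1+\epsilon]$ on the spectrum. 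Applying spectral calculus to $\PPPu$, the self-adjoint operator $\PPPu^{1/2}\Psi_{1/2}(\PPPu)$ has all its eigenvalues in $[1-\epsilon,1+\epsilon]$, hence
\begin{equation*}
    (1-\epsilon)^2 \|\ww\|_2^2 \leq \|\PPPu^{1/2}\Psi_{1/2}(\PPPu)\ww\|_2^2 \leq (1+\epsilon)^2 \|\ww\|_2^2.
\end{equation*}

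Chaining the two sandwich estimates then gives the claim with $\gamma_1^\epsilon = (1-\epsilon)^2\gamma_1$ and $\gamma_2^\epsilon = (1+\epsilon)^2\gamma_2$; the hypothesis $\epsilon < 1$ ensures $\gamma_1^\epsilon > 0$ so that $\PPPw^{-1/2}\AAAu\PPPw^{-1/2}$ is genuinely positive definite. There is no real obstacle here — the statement is essentially a bookkeeping exercise combining two previously established facts — but the one place to be careful is that $\Psi_{1/2}(\PPPu)$ and $\PPPu^{1/2}$ commute (they are both polynomial/functional expressions in the same symmetric matrix $\PPPu$), which is what legitimizes treating $\PPPu^{1/2}\Psi_{1/2}(\PPPu)$ as a self-adjoint operator with the spectrum computed by scalar functional calculus.
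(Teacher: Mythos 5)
Your proof is correct, and it reaches the same two-step reduction as the paper: first rewrite $\duality{\PPPw^{-1/2}\AAAu\PPPw^{-1/2}\ww}{\ww}$ so that the spectral equivalence \eqref{eq: spectral equivalence Au and Pu} between $\AAAu$ and $\PPPu$ can be applied, and then show $(1-\epsilon)\|\ww\|_2 \leq \|\PPPu^{1/2}\Psi_{1/2}(\PPPu)\ww\|_2 \leq (1+\epsilon)\|\ww\|_2$. Where you diverge is only in the final estimate. The paper takes the stated hypothesis \eqref{eq: error bound on JJ half with norms} at face value: it substitutes $\vv=\PPPu^{1/2}\ww$ to get $\|(\III-\PPPw^{-1/2}\PPPu^{1/2})\ww\|_2\le\epsilon\|\ww\|_2$ and then closes with the triangle and reverse-triangle inequalities. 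You instead step back to the scalar bound \eqref{eq:relative_bound_on_exponential_sums_approximation} and apply spectral calculus directly to the commuting pair $\PPPu^{1/2}$, $\Psi_{1/2}(\PPPu)$, reading off eigenvalue bounds $1-\epsilon\le t^{1/2}\Psi_{1/2}(t)\le 1+\epsilon$. The two routes are mathematically equivalent here (in finite dimensions, testing \eqref{eq: error bound on JJ half with norms} on eigenvectors of $\PPPu$ recovers the scalar bound on the spectrum), but they emphasize different things: the paper's argument uses only the assumed operator inequality and stays at the vector level, which is slightly more robust and matches the lemma's literal hypothesis, while yours is more transparent about \emph{why} the bound holds and makes the eigenvalue picture explicit. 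Both give exactly the constants $\gamma_1^\epsilon=(1-\epsilon)^2\gamma_1$ and $\gamma_2^\epsilon=(1+\epsilon)^2\gamma_2$.
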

\begin{proof}
Setting $\vv=\JJJ^{1/2}\ww$ in \cref{eq: error bound on JJ half with norms} yields
\begin{align*}
    \|(\II-\PPPw^{-1/2}\JJJ^{1/2})\ww\|_2 \leq \epsilon\|\ww\|_2.
\end{align*}
Then, for $\hat\AAAu= \PPPu^{-1/2} \AAAu \PPPu^{-1/2}$, we have that
$\langle \PPPw^{-1/2} \AAAu \PPPw^{-1/2}\ww,\ww\rangle = \langle \hat\AAAu \PPPu^{1/2} \PPPw^{-1/2}\ww, \PPPu^{1/2}\PPPw^{-1/2}\ww \rangle$.
Hence, using the spectral equivalence \cref{eq: spectral equivalence Au and Pu}, we obtain that
\begin{align*}
	\gamma_1 \|\PPPu^{1/2} \PPPw^{-1/2}\ww\|_2^2\leq \langle \PPPw^{-1/2} \AAAu \PPPw^{-1/2}\ww, \ww\rangle \leq \gamma_2 \|\PPPu^{1/2}\PPPw^{-1/2}\ww\|_2^2.
\end{align*}
Moreover, by the triangle inequality ($\III$ is the identity matrix in $\RRR^{JN\times JN})$,
\begin{align*}
    \|\PPPu^{1/2} \PPPw^{-1/2}\ww\|_2 &\leq \|(\III-\PPPw^{-1/2}\PPPu^{1/2})\ww\|_2  + \|\ww\|_2 \leq (1+\epsilon) \|\ww\|_2,\\
    \|\PPPu^{1/2} \PPPw^{-1/2}\ww\|_2 & \geq \left| \|(\III-\PPPw^{-1/2} \PPPu^{1/2})\ww\|_2  - \|\ww\|_2 \right| \geq (1-\epsilon) \|\ww\|_2,
\end{align*}
concluding the proof.
\end{proof}

\begin{remark}
    \label{rem:choice_of_parameters_in_exp_sum}
	Let us comment on the choices of the parameters $h$, $i_1$, $i_2$ in \cref{eq:approximation_property} to ensure the bound \cref{eq:relative_bound_on_exponential_sums_approximation}.
    As shown in \cite[Theorem 3]{BM10}, it suffices to choose $h=h(\epsilon)$ such that 
    \begin{equation}
        h \leq \dfrac{2\pi}{\ln(3) +\beta |\ln \cos 1| + |\ln(\epsilon)|}.
    \end{equation}
    Hence, $h$ depends only logarithmically on $\epsilon$, but not on $\Lambda/\lambda$.
    We note that we require only the spectral equivalence \cref{eq: equivalence between A and P matrices}, that is, in practice it suffices to choose a fixed $\epsilon>0$ (say, $\epsilon=1/10$).
    Performing the change of variable $s = \ln t$, \cref{eq:relative_bound_on_exponential_sums_approximation} amounts to look for $i_1$ and $i_2$ such that 
    \begin{equation}
        \label{eq:logarithmic_dependance_eigenvalues}
        1-\epsilon \leq \sum_{i=-i_1}^{i_2} \exp\bigl(-e^{ih+s}+ \textstyle\frac{1}{2}\displaystyle(ih+s)\bigr)  \leq 1+\epsilon \quad\text{for all } \ln(\lambda) \leq s \leq \ln(\Lambda).
    \end{equation}
    Hence, $i_1$, $i_2$ depend logarithmically on $\lambda$ and $\Lambda$.
\end{remark}

\section{Transformed linear system and iterative method}
\label{sec:Transformed_linear_system_and_iterative_method}
Using the definition of the preconditioner in \cref{eq:definition_P^(-1/2)}, we will from now on define $\AAAw = \PPPw^{-1/2}\AAAu \PPPw^{-1/2} $ and consider the left-right preconditioned counterpart of \cref{eq:linear_system}
\begin{align}
    \label{eq:linear_system_transformed}
    \AAAw \ww = \ffw,
\end{align}
where $\ffw = \PPPw^{-1/2}\ffu$ and $\ww = \PPPw^{1/2}\uu$. 
The next result, which follows from \Cref{lem:spectral_equivalence_exp_sum}, shows that a Richardson iteration for the system \cref{eq:linear_system_transformed} converges for a suitable choice of the step-size parameter. 
\begin{theorem}
    \label{thm:convergence_standard_richardson}
	Let $\epsilon<1$ and $\gamma_2^\epsilon$ as in \Cref{lem:spectral_equivalence_exp_sum}. For any step-size parameter $\omega \in (0, 2/\gamma_2^\e)$ and any $\ww_0 \in \RRR^{JN}$ the iteration
	 \begin{align*}
	     \ww_{k+1} = \ww_k - \omega(\AAAw \ww_k - \ffw), \qquad k \geq 0,
	 \end{align*}
	 converges linearly to the solution $\ww$ of \cref{eq:linear_system_transformed}. In particular, for $\omega^*=2/(\gamma_1^\epsilon+\gamma_2^\epsilon)$ we have that
	 \begin{align*}
	     \norm{\ww - \ww_k}_2 \leq \rho^k \norm{\ww - \ww_0}_2 \qquad \text{with }\rho = \dfrac{\gamma_2^\epsilon-\gamma_1^\epsilon}{\gamma_2^\epsilon+\gamma_1^\epsilon}.
	 \end{align*}
\end{theorem}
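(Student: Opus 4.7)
The plan is to reduce the statement to a standard error-propagation argument for a linear stationary iteration, leveraging the spectral equivalence \cref{eq: equivalence between A and P matrices} from \Cref{lem:spectral_equivalence_exp_sum}. First I note that $\AAAw = \PPPw^{-1/2} \AAAu \PPPw^{-1/2}$ is symmetric positive definite: $\AAAu$ is symmetric (the bilinear form $\au$ is symmetric) and $\PPPw^{-1/2} = \Psi_{1/2}(\PPPu)$ is symmetric by construction via spectral calculus on the symmetric matrix $\PPPu$. Consequently, by \Cref{lem:spectral_equivalence_exp_sum}, the spectrum of $\AAAw$ is contained in $[\gamma_1^\epsilon,\gamma_2^\epsilon]$.

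Next I would subtract the fixed-point identity $\ww = \ww - \omega(\AAAw \ww - \ffw)$ (which holds since $\AAAw\ww = \ffw$) from the iteration to obtain the error recursion
\begin{equation*}
    \ww_{k+1} - \ww = (\II - \omega \AAAw)(\ww_k - \ww).
\end{equation*}
Taking Euclidean norms and using that $\II - \omega\AAAw$ is symmetric, I would invoke
\begin{equation*}
    \|\II - \omega \AAAw\|_2 = \max_{\lambda \in \sigma(\AAAw)} |1 - \omega\lambda| \leq \max\bigl\{|1-\omega\gamma_1^\epsilon|,\, |1-\omega\gamma_2^\epsilon|\bigr\} =: \rho(\omega).
\end{equation*}
A direct analysis of the piecewise-linear function $\omega \mapsto \rho(\omega)$ shows that $\rho(\omega) < 1$ exactly when $\omega \in (0,2/\gamma_2^\epsilon)$, which gives linear convergence for every such step size.

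For the optimal rate, I would observe that $\rho(\omega)$ is minimized at the point where the two branches $|1-\omega\gamma_1^\epsilon|$ and $|1-\omega\gamma_2^\epsilon|$ coincide, i.e., when $1-\omega\gamma_1^\epsilon = -(1-\omega\gamma_2^\epsilon)$. Solving yields $\omega^\ast = 2/(\gamma_1^\epsilon + \gamma_2^\epsilon)$ and the resulting contraction factor $\rho = (\gamma_2^\epsilon - \gamma_1^\epsilon)/(\gamma_2^\epsilon + \gamma_1^\epsilon)$. Iterating the one-step error bound $k$ times then yields the claimed estimate $\|\ww - \ww_k\|_2 \leq \rho^k \|\ww - \ww_0\|_2$.

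There is no genuine obstacle here: the argument is textbook for Richardson iteration on a symmetric positive definite system. The only point that requires care is to make sure the spectral bounds of \Cref{lem:spectral_equivalence_exp_sum}, which are stated in terms of quadratic forms $\langle \AAAw \ww, \ww\rangle$, are correctly transferred to bounds on the eigenvalues of $\AAAw$; this is immediate since $\AAAw$ is symmetric and $\|\cdot\|_2$ is the norm induced by the Euclidean inner product.
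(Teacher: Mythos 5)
Your proof is correct and is exactly the standard argument the paper intends: the paper states the theorem without a written-out proof, noting only that it "follows from \Cref{lem:spectral_equivalence_exp_sum}," and what you supply — symmetry of $\AAAw$, transfer of the quadratic-form bounds to spectral bounds, the error recursion $\ww_{k+1}-\ww = (\II-\omega\AAAw)(\ww_k-\ww)$, the $2$-norm estimate $\|\II-\omega\AAAw\|_2 = \max_{\lambda\in\sigma(\AAAw)}|1-\omega\lambda|$, and the minimization yielding $\omega^*$ and $\rho$ — is precisely the textbook Richardson argument that closes this gap. No discrepancy with the paper's route.
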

Since $\AAAw$ is composed of operators that are short sums of Kronecker products, the memory requirement of storing $\AAAw$ is $\O((J+N)(i_1+i_2+1))$.

\begin{remark}
    We remark that the contraction rate in \Cref{thm:convergence_standard_richardson} does not depend on $J$ and $N$ but only on intrinsic properties of the problem considered. Furthermore both the optimal step-size parameter $\omega$ and the contraction rate $\rho$ can be computed explicitly. 
\end{remark}

\subsection{Rank controlled iteration}
\label{sec:rank_controlled_iteration}
From the representations of \cref{eq:definition_P^(-1/2)} for $\PPPw^{-1/2}$ and \cref{eq:def_A} for $\AAAu$, we observe that the ranks of $\WW_{k}=\mat(\ww_k)$ defined in \Cref{thm:convergence_rank_richardson} will grow in each iteration by $4(i_1+i_2+1)^2$. Therefore, we next consider rank truncation based on the singular value decomposition
\begin{align}
    \label{eq:SVD_W}
    \WW = \sum_{k=1}^R \sigma_k \uu_k\otimes\vv_k,
\end{align}
with $R\leq \min\{J,N\}$ denoting the rank of $\WW$. Here $\sigma_k>0$ are the singular values of $\WW$, which we assume to be ordered non-increasingly, and $\{\uu_k\}$ form an orthonormal basis of the range of $\WW$, while $\{\vv_k\}$ form an orthonormal basis for the orthogonal complement of the kernel of $\WW$.

For $\delta\geq 0$, consider the soft thresholding function $s_\delta$, which is defined for real numbers $t$ by
\begin{align*}
    s_\delta(t)={\rm sgn}(t)\max\{0, |t|-\delta\}.
\end{align*}
The soft thresholding operator $\S_\delta:\RRR^{J \times N}\to \RRR^{J\times N}$ is then defined via
\begin{align}\label{eq:def_soft_thresholding}
    \S_\delta(\WW) = \sum_{k=1}^R s_\delta(\sigma_k) \uu_k\otimes\vv_k.
\end{align}

The next result recalls the non-expansiveness of the soft thresholding operator with respect to the Frobenius norm; see, e.g., \cite[Proposition~3.2]{BS17} for a proof.
\begin{proposition}
    For any $\VV,\WW \in \RRR^{J \times N}$ and $\delta \geq 0$, the operator $\S_\delta$ defined in \cref{eq:def_soft_thresholding} satisfies
    $\norm{\S_\delta(\VV) - \S_\delta(\WW)}_{\frob} \leq \norm{\VV-\WW}_{\frob}$.
\end{proposition}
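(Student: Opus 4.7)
The approach is to identify $\S_\delta$ as the proximal operator of the nuclear norm (scaled by $\delta$) on the Hilbert space $(\RRR^{J\times N}, \inner{\cdot}{\cdot}_\frob)$, and then invoke the standard fact that proximal maps of proper, convex, lower semicontinuous functions are firmly non-expansive. Since the statement is about the single operator $\S_\delta$ and is purely matrix-analytic, I do not need to use anything from the preceding radiative-transfer context.

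First, I would record the scalar identity
\begin{equation*}
    s_\delta(t) = \arg\min_{x\in \RRR}\, \Bigl( \tfrac{1}{2}(x-t)^2 + \delta |x| \Bigr),
\end{equation*}
which is elementary and follows from writing the first-order optimality condition for the strictly convex scalar problem on the right. This identifies $s_\delta$ as the proximal operator of $\delta |\cdot|$ on $\RRR$.

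Second, I would lift this to the matrix setting by establishing the variational characterization
\begin{equation*}
    \S_\delta(\WW) = \arg\min_{\XX \in \RRR^{J\times N}}\, \Bigl\{ \tfrac{1}{2}\norm{\XX - \WW}_\frob^2 + \delta \norm{\XX}_\ast \Bigr\},
\end{equation*}
where $\norm{\XX}_\ast = \sum_k \sigma_k(\XX)$ is the nuclear norm. Writing $\WW = \UU\SSigma\VV^T$ with the SVD from \cref{eq:SVD_W}, the key point is that any minimizer $\XX^\ast$ must share the singular vectors $\UU$, $\VV$ of $\WW$, so that the problem decouples into independent scalar thresholding problems for the singular values, each solved by $s_\delta$. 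This step is the classical singular-value-thresholding lemma (Cai--Candès--Shen).

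Third, with $\S_\delta = \prox_{\delta \norm{\cdot}_\ast}$ on the Hilbert space $(\RRR^{J\times N}, \inner{\cdot}{\cdot}_\frob)$, I would invoke the general Moreau property: the proximal operator of any proper, convex, lower semicontinuous functional is firmly non-expansive, hence $1$-Lipschitz, in the Hilbert-space norm. The nuclear norm clearly has these properties, which yields $\norm{\S_\delta(\VV) - \S_\delta(\WW)}_\frob \leq \norm{\VV - \WW}_\frob$.

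The main obstacle is the second step, namely the fact that the minimizer of the nuclear-norm-penalized quadratic shares singular vectors with $\WW$. The natural tool is von Neumann's trace inequality $\tr(\XX^T \WW) \leq \sum_k \sigma_k(\XX)\sigma_k(\WW)$, together with its equality case (same singular vectors). Using it to bound the cross term $-\tr(\XX^T \WW)$ in $\frac{1}{2}\norm{\XX - \WW}_\frob^2 + \delta \norm{\XX}_\ast$ from below reduces the matrix problem to the scalar problem $\min_{x_k \geq 0}\sum_k \bigl(\tfrac{1}{2}(x_k - \sigma_k(\WW))^2 + \delta x_k\bigr)$, whose unique solution is $x_k = s_\delta(\sigma_k(\WW))$, attained by $\XX$ with the singular vectors of $\WW$; this is precisely $\S_\delta(\WW)$. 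Once this is in place, the final estimate is immediate.
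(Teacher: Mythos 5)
Your proof is correct, and it follows the same route as the reference the paper cites for this result ([BS17, Proposition~3.2]): there, as in your argument, $\S_\delta$ is identified as the proximal operator of the scaled nuclear norm on $(\RRR^{J\times N}, \inner{\cdot}{\cdot}_\frob)$ via the singular-value-thresholding lemma, and non-expansiveness then follows from firm non-expansiveness of proximal maps of proper, convex, lower semicontinuous functionals. The paper itself omits the proof and only points to that reference, so there is nothing further to reconcile.
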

Since the composition of a non-expansive map with a contraction is a contraction, with \cite[Lemma 4.1]{BS17} applied to our setting, we obtain the following result.
\begin{theorem}
    \label{thm:convergence_rank_richardson}
    For any $\delta>0$, $\WW_0 = \mat{(\ww_0)} \in \RRR^{J\times N}$, and $\omega$, $\rho$ as in \cref{thm:convergence_standard_richardson}, the iteration
    \begin{equation}
        \label{eq:thresholded_iteration}
        \WW_{k+1} = \Sd(\WW_k - \omega (\AAA \WW_k-\FFw))
    \end{equation}
    has a unique fixed point $\WW^\delta$ satisfying
    \begin{align*}
        \norm{\WW_k- \WW^\delta}_{\frob} \leq \rho^k \norm{\WW_0 - \WW^\delta}_{\frob}.  
    \end{align*}
    Moreover, the distance between $\WW^{\delta}$ and the matricization $\WW^{\ast}$ of the fixed point of the non-thresholded version of \cref{eq:thresholded_iteration} satisfies
    \begin{equation}
        (1+\rho)^{-1} \norm{\S_\delta(\WW^{\ast}) - \WW^{\ast}}_{\frob} \leq \norm{\WW^\delta - \WW^{\ast}}_{\frob} \leq (1-\rho)^{-1} \norm{\S_\delta(\WW^{\ast}) - \WW^{\ast}}_{\frob}.
    \end{equation}
\end{theorem}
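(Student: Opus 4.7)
The plan is to recognize that the iteration in the theorem is the composition of a linear affine map $T(\WW) := \WW - \omega(\AAAw \WW - \FFw)$ with the soft thresholding operator $\Sd$, and to exhibit this composition as a contraction in the Frobenius norm so that Banach's fixed point theorem delivers existence, uniqueness, and geometric convergence. The a posteriori bounds then drop out of the usual triangle--inequality trick built around the identity $T(\Wstar)=\Wstar$.

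Concretely, I first note that under the vectorization $\vec$, the Frobenius norm on $\RRR^{J\times N}$ coincides with the Euclidean norm on $\RRR^{JN}$, and the operator $T$ corresponds to the Euclidean Richardson step $\vv\mapsto \vv-\omega(\AAAw\vv-\ffw)$. By \Cref{lem:spectral_equivalence_exp_sum}, the spectrum of $\AAAw$ lies in $[\gamma_1^\epsilon,\gamma_2^\epsilon]$, and \Cref{thm:convergence_standard_richardson} then yields, for the prescribed step size $\omega^\ast$, the bound $\|T(\VV)-T(\WW)\|_{\frob}\leq \rho\|\VV-\WW\|_{\frob}$ for all $\VV,\WW\in\RRR^{J\times N}$. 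Combining this with the non-expansiveness of $\Sd$ stated in the preceding proposition gives
\begin{equation*}
\norm{\Sd(T(\VV))-\Sd(T(\WW))}_{\frob}\leq \rho\norm{\VV-\WW}_{\frob},
\end{equation*}
so $\Sd\circ T$ is a $\rho$-contraction on the complete metric space $(\RRR^{J\times N},\|\cdot\|_{\frob})$. Banach's fixed point theorem yields the unique $\Wdelta$, and the standard contraction estimate applied to the iterates $\WW_{k+1}=\Sd(T(\WW_k))$ gives $\|\WW_k-\Wdelta\|_{\frob}\leq\rho^k\|\WW_0-\Wdelta\|_{\frob}$.

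For the two-sided bound on $\|\Wdelta-\Wstar\|_{\frob}$, I would use that $\Wstar$ solves $\AAAw\Wstar=\FFw$ in matricized form, hence $T(\Wstar)=\Wstar$. Writing
\begin{equation*}
\Wdelta-\Sd(\Wstar)=\Sd(T(\Wdelta))-\Sd(T(\Wstar)),
\end{equation*}
the contraction estimate above gives $\|\Wdelta-\Sd(\Wstar)\|_{\frob}\leq \rho\|\Wdelta-\Wstar\|_{\frob}$. A triangle inequality $\|\Sd(\Wstar)-\Wstar\|_{\frob}\leq\|\Sd(\Wstar)-\Wdelta\|_{\frob}+\|\Wdelta-\Wstar\|_{\frob}$ then yields the lower bound $(1+\rho)^{-1}\|\Sd(\Wstar)-\Wstar\|_{\frob}\leq\|\Wdelta-\Wstar\|_{\frob}$, while the reverse triangle inequality $\|\Wdelta-\Wstar\|_{\frob}\leq\|\Wdelta-\Sd(\Wstar)\|_{\frob}+\|\Sd(\Wstar)-\Wstar\|_{\frob}$ combined with $\|\Wdelta-\Sd(\Wstar)\|_{\frob}\leq\rho\|\Wdelta-\Wstar\|_{\frob}$ and rearrangement produces the upper bound $\|\Wdelta-\Wstar\|_{\frob}\leq(1-\rho)^{-1}\|\Sd(\Wstar)-\Wstar\|_{\frob}$.

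There is not much of a genuine obstacle: the only substantive point is the passage from the Euclidean contraction of the preconditioned Richardson step (Theorem earlier) to a Frobenius-norm contraction on matrices, which is immediate because $\vec$ is an isometry between $(\RRR^{J\times N},\|\cdot\|_{\frob})$ and $(\RRR^{JN},\|\cdot\|_2)$, together with the fact that $\Sd$ acts on matrices and is non-expansive in Frobenius norm. Everything else is fixed-point abstract nonsense plus the standard trick of inserting $\Sd(\Wstar)$ in the triangle inequality. This also matches \cite[Lemma~4.1]{BS17}, which is cited in the statement.
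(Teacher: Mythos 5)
Your proposal is correct and matches the argument the paper invokes by citing \cite[Lemma 4.1]{BS17}: the fixed-point existence, uniqueness, and geometric convergence follow from $\Sd\circ T$ being a $\rho$-contraction (composition of a nonexpansive map with the $\rho$-contraction $T$), and the two-sided bound comes from inserting $\Sd(\Wstar)$ into the triangle inequality together with $T(\Wstar)=\Wstar$. The paper does not spell out the proof but defers to that lemma; your write-up supplies exactly the details that reference contains, including the identification of the Frobenius norm with the Euclidean norm under $\vec$ and the observation that $\|\II-\omega^*\AAAw\|_2=\rho$ given the spectral bounds from \Cref{lem:spectral_equivalence_exp_sum}.
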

\Cref{thm:convergence_rank_richardson} shows that the error $\norm{\WW^\delta - \WW^{\ast}}_{\frob}$ in the limit $\WW^\delta$ of the thresholded iteration is proportional to the error of thresholding the exact solution, which will be small if $\delta$ is chosen sufficiently small in relation to the decay of the singular values of $\WW^{\ast}$. 

In general, instead of using a fixed thresholding parameter $\delta$, it is useful to consider a sequence $\delta_k\to 0$ such that the value of the parameter is decreased or maintained adaptively in each iteration, balancing the need to speed up convergence with the need to keep $\WW^{\delta_k}$ as close as possible to $\WW^{\ast}$. If the decay behavior of the singular values of $\WW^{\ast}$ is known, then \textit{a priori} choice rules for the sequence of thresholding parameters are proposed and analyzed in \cite{BS17}. In the more practical situation where such a decay behavior is not known, or only suboptimal estimates for the decay are available, \textit{a posteriori} choice rules as given in \cite[Section~5]{BS17} are more useful. In our case, we implement such dynamical method in \Cref{alg:dynamical_thresholding}.
\begin{algorithm}
    \caption{ $\WW^\e={\rm STSolve}(\AAAw,\FFw;\e)$ adapted from \cite[Algorithm~2]{BS17}.
    \label{alg:dynamical_thresholding}}
    \begin{algorithmic}[1]
        \REQUIRE $\delta_0\geq \omega \norm{\FF}_{\frob}$, $\nu,\theta\in (0,1)$, $\gamma_1^\epsilon,\gamma_2^\epsilon, \omega, \rho$ as in \Cref{thm:convergence_standard_richardson}
        \STATE $\WW_0 \colonequals 0$, $\RR_0 \colonequals - \FF$, $k \colonequals 0$
        \WHILE{$ \norm{\RR_k}_{\frob} > \gamma_1^\epsilon \e$}
            \STATE $\WW_{k+1} \colonequals \S_{\delta_k}(\WW_k- \omega \RR_k)$
            \STATE $\RR_{k+1} \colonequals \AAAw\WW_{k+1} - \FF$
            \IF{$\norm{\WW_{k+1}-\WW_{k}}_{\frob} \leq \frac{(1-\rho)\nu}{\gamma_2^\e \rho} \norm{\RR_{k+1}}_{\frob}$}
                \STATE $\delta_{k+1}\colonequals \theta \delta_k$
            \ELSE
                \STATE $\delta_{k+1}\colonequals \delta_k$
            \ENDIF
            \STATE $k\leftarrow k+1$
        \ENDWHILE
        \STATE $\WW^\e\colonequals \WW_k$
    \end{algorithmic}
\end{algorithm}

In order to discuss the convergence behavior of \Cref{alg:dynamical_thresholding}, we need some notation. By density of the approximation spaces for $J,N\to \infty$, we may think of infinite sequences of singular values now.
For $\e>0$, the truncated singular value decomposition
\begin{equation}
    \label{eq:truncated_SVD}
	\WW^{r^*} = \sum_{k=1}^{r^*} \sigma_k \uu_k \otimes \vv_k
\end{equation}
yields the optimal, i.e., smallest, rank $r^*=r^*(\e)$ which satisfies $\|\WW^{r^*}-\WW\|_{\frob}\leq \e$. We say that a family $\{\WW^\e\}_{\e>0}$ of approximations for $\WW$ has quasi-optimal ranks if there exists a constant $C>0$ such that $\norm{\WW - \WW^\e}_\frob \leq C \e$ and if the rank of $\WW^\e$ is bounded by $ C r^*(\e)$.

For $p>0$, the $\ell^{p,\infty}$-norm of the sequence $\boldsymbol\sigma =\{\sigma_k\}$ of singular values is defined by
\begin{align*}
    \norm{\boldsymbol\sigma}_{\ell^{p,\infty}} \colonequals \sup_{n \in \NNN} n^{\frac{1}{p}} \sigma_n,
\end{align*}
provided the expression on the right-hand side is finite. The corresponding sequence space satisfies $\ell^{p,\infty}\subset \ell^2$ for $0<p<2$, and $\boldsymbol\sigma \in \ell^{p,\infty}$ if and only if there exists $C>0$ such that
\begin{align*}
    \sqrt{\sum_{k>n} |\sigma_k|^2} \leq C n^{-s}\quad\text{for } s\colonequals \frac{1}{p}-\frac{1}{2},
\end{align*}
see, e.g., \cite{DV98}. Therefore, $\boldsymbol\sigma\in \ell^{p,\infty}$ for some $0<p<2$ ensures algebraic decay of the singular values and the quasi-best ranks scale like $C^{1/s} \e^{-1/s}$.

The next result, whose proof carries over from the proof of \cite[Theorem 5.1]{BS17}, ensures that the iterates produced by the algorithm are of quasi-optimal ranks.
\begin{theorem}
    \label{thm:st_richardson}
    For any $\e>0$, \Cref{alg:dynamical_thresholding} produces an approximation $\WW^\e$ to $\WW^\ast$ with $\norm{\WW^{\e}-\WW^{\ast}}_{\frob} \leq \e$ in finitely many steps. Moreover,
    \begin{enumerate}[(i)]
        \item if $\boldsymbol\sigma^{\ast}$, the sequence of singular values of $\WW^{\ast}$, satisfies $\boldsymbol\sigma^\ast \in \ell^{p,\infty}$, then there exist $\tilde\rho \in (0,1)$ and a constant $C>0$, depending on $\gamma_1^\epsilon, \gamma_2^\epsilon, \theta, \nu, \delta_0$ and $p$, such that with $\e_k \colonequals \tilde\rho^k$ and $s = 1/p-1/2$, we have
        \begin{align*}
            \norm{\WW_k-\WW^{\ast}}_{\frob} \leq C \e_k,\qquad \rank(\WW_k) \leq C^2 \norm{\boldsymbol\sigma^{\ast}}_{\ell^{p,\infty}}^{1/s} \e_k^{-1/s};
        \end{align*}
        \item if $\sigma^{\ast}_k \leq c_1 \exp(-c_2 k^\beta)$ for some constants $\beta,c_1,c_2>0$, then there exist $\tilde\rho\in(0,1)$ and a constant $C>0$, depending on $\gamma_1^\epsilon, \gamma_2^\epsilon, \theta, \nu, \delta_0, p, c_1, c_2$ and $\beta$, such that with $\e_k \colonequals \tilde \rho^k$, we have
        \begin{align*}
            \norm{\WW_k-\WW^{\ast}}_{\frob}\leq C \e_k,\qquad \rank(\bW_k)\leq C (1+|\ln(\e_k)|)^{1/\beta}.
        \end{align*}
    \end{enumerate}
\end{theorem}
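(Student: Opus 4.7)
The plan is to adapt the analysis of \cite[Theorem~5.1]{BS17} to the present setting; the required structural ingredients are the symmetry of $\AAAw$, its spectral equivalence to the identity in the Euclidean/Frobenius norm from \Cref{lem:spectral_equivalence_exp_sum}, and the contraction plus non-expansiveness facts already recorded in \Cref{thm:convergence_rank_richardson}. Since the argument in \cite{BS17} is phrased abstractly for a self-adjoint positive definite operator that is spectrally equivalent to the identity combined with a non-expansive shrinkage, it transfers to our setting essentially verbatim, and the task is to verify that every ingredient it uses is available here.

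First I would establish finite termination with the stated accuracy. Because the spectrum of $\AAAw$ lies in $[\gamma_1^\epsilon,\gamma_2^\epsilon]$, the lower bound $\|\RR_k\|_\frob \geq \gamma_1^\epsilon\|\WW_k-\WW^\ast\|_\frob$ gives that the stopping criterion $\|\RR_k\|_\frob\leq \gamma_1^\epsilon\e$ implies $\|\WW_k-\WW^\ast\|_\frob\leq \e$. To show the loop actually exits, I would partition the iterations into batches of constant $\delta$: within such a batch, \Cref{thm:convergence_rank_richardson} yields linear contraction of $\WW_k$ to the biased fixed point $\WW^\delta$ with rate $\rho$, so successive differences $\|\WW_{k+1}-\WW_k\|_\frob$ decay at rate $\rho$ while, away from $\WW^\delta$, the residual $\|\RR_{k+1}\|_\frob$ stays bounded below by a multiple of $\|\WW_{k+1}-\WW^\delta\|_\frob$ via spectral equivalence. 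A short calculation then shows that the trigger $\|\WW_{k+1}-\WW_k\|_\frob\leq \tfrac{(1-\rho)\nu}{\gamma_2^\e\rho}\|\RR_{k+1}\|_\frob$ is met within a number of iterations $K$ depending only on $\rho$, $\nu$, $\theta$, after which $\delta_{k+1}=\theta\delta_k$. Consequently $\delta_k$ decays geometrically, the bias $\|\WW^{\delta_k}-\WW^\ast\|_\frob$ does too by \Cref{thm:convergence_rank_richardson}, and the algorithm terminates.

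For the rank bounds I would use the identity $\rank(\WW_{k+1}) = \#\{j: \sigma_j(\WW_k-\omega\RR_k) > \delta_k\}$ together with Weyl's inequality $\sigma_j(\WW_k-\omega\RR_k) \leq \sigma_j(\WW^\ast) + \|\WW_k-\omega\RR_k-\WW^\ast\|_\frob$. Inductively bounding $\|\WW_k-\omega\RR_k-\WW^\ast\|_\frob$ by a multiple of $\e_k$, combined with $\delta_k$ of the same order $\e_k$, one obtains $\rank(\WW_{k+1}) \leq \#\{j: \sigma_j^\ast \geq c\delta_k\}$ for some $c>0$. In case (i) the $\ell^{p,\infty}$ bound $\sigma_j^\ast \leq \|\boldsymbol\sigma^\ast\|_{\ell^{p,\infty}} j^{-1/p}$ gives $\#\{j:\sigma_j^\ast \geq c\delta_k\}\leq C\|\boldsymbol\sigma^\ast\|_{\ell^{p,\infty}}^{p} \delta_k^{-p}$, and with $1/p = s+1/2$ and $\delta_k \sim \e_k$ this matches the stated $\e_k^{-1/s}$ scaling. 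In case (ii) the exponential decay yields $\#\{j:\sigma_j^\ast \geq c\delta_k\}\leq C(1+|\ln(\delta_k)|)^{1/\beta}$, giving the logarithmic rank bound.

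The main technical obstacle is the self-referential character of the preceding estimates: the rank bound at step $k+1$ relies on the error bound $\|\WW_k-\WW^\ast\|_\frob\leq C\e_k$, which in turn relies on $\delta_k$ being reduced on schedule, which in turn depends on the Richardson contraction. The standard resolution, as in \cite[Section~5]{BS17}, is an induction over $k$ that simultaneously propagates the error, the residual, and the current $\delta_k$. The only properties of the problem entering the induction are the symmetry and spectral bounds of $\AAAw$ and the non-expansiveness of $\S_\delta$ in $\|\cdot\|_\frob$, all of which are available here, so the inductive argument of \cite{BS17} goes through without modification.
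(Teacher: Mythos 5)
Your overall strategy---deferring to \cite[Theorem 5.1]{BS17} after verifying that $\AAAw$ is symmetric with spectrum in $[\gamma_1^\epsilon,\gamma_2^\epsilon]$ and that $\S_\delta$ is non-expansive in the Frobenius norm---is exactly the paper's own argument, which consists precisely of that citation; the ingredient check you supply is the correct content, and your finite-termination argument via $\|\RR_k\|_\frob \geq \gamma_1^\epsilon\|\WW_k-\WW^\ast\|_\frob$ together with the batching into constant-$\delta$ phases faithfully reproduces the structure of the cited proof.

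The sketch of the case-(i) rank bound, however, has a genuine gap. You posit $\delta_k\sim\e_k$ and use Weyl's inequality to conclude $\rank(\WW_{k+1}) \leq \#\{j:\sigma_j^\ast\geq c\delta_k\}$, but $\delta_k\sim\e_k$ cannot hold: by the lower bound in \Cref{thm:convergence_rank_richardson} one has $\e_k\gtrsim\|\S_{\delta_k}(\WW^\ast)-\WW^\ast\|_\frob$, and for $\boldsymbol\sigma^\ast\in\ell^{p,\infty}$ this soft-thresholding error scales like $\delta_k^{1-p/2}$, forcing $\delta_k\lesssim\e_k^{1/(1-p/2)}=\e_k^{(2s+1)/(2s)}$, which decays strictly faster than $\e_k$. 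In that regime the Weyl step is vacuous, since the perturbation $\|\WW_k-\omega\RR_k-\WW^\ast\|_\frob\sim\e_k$ dominates $\delta_k$ and $\delta_k - \|\WW_k-\omega\RR_k-\WW^\ast\|_\frob$ is negative. The actual rank count in \cite{BS17} uses a Mirsky/Hoffman--Wielandt estimate $\sum_j|\sigma_j(M)-\sigma_j^\ast|^2\leq\|M-\WW^\ast\|_\frob^2$, splitting the count into $\#\{j:\sigma_j^\ast>\delta_k/2\}\lesssim\delta_k^{-p}$ plus $\#\{j:\sigma_j(M)-\sigma_j^\ast>\delta_k/2\}\lesssim\e_k^2/\delta_k^2$, both of which reduce to $\e_k^{-1/s}$ with the correct relation between $\delta_k$ and $\e_k$. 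Note also that even accepting $\delta_k\sim\e_k$ your count $\delta_k^{-p}\sim\e_k^{-p}$ would not ``match'' $\e_k^{-1/s}$, since $1/s=2p/(2-p)>p$; the arithmetic only closes once the true scaling of $\delta_k$ is used.
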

Note that \Cref{alg:dynamical_thresholding} does not require the user to specify or estimate the decay behaviour of the singular values of the solution. Hence, \Cref{alg:dynamical_thresholding} automatically produces convergent iterates having quasi-optimal ranks.

\section{Numerical realization}
\label{sec:Numerical_realization}
In the following subsections we first describe two typical discretization schemes for the radiative transfer equation, focusing on the $S_N$-FEM method for a description of the computational complexity and memory requirements of key aspects of \Cref{alg:dynamical_thresholding}. We then present a procedure to lower such computational costs and storage requirements. 

\subsection{\texorpdfstring{$P_N$-FEM method}{PN-FEM method}}
\label{sec:PN-FEM_method}
Consider an equi-spaced partition $0=z_0<z_1<\ldots<z_J=Z$ of the interval $(0,Z)$, and let $\psi_j(z)$ be the piecewise linear functions on this partition such that $\psi_j(z_i)=\delta_{i,j}$.
For the discretization in $\mu$, denote by $L_n$ the Legendre polynomial of degree $n$, $n<N$ and $N$ odd, associated with the interval $(-1,1)$. We recall that $L_n$ is even if and only if $n$ is even. Since we only approximate the even part of the solution, we may therefore choose $H_n$ to be the restriction of $L_{2n}$ to $(0,1)$ such that $\int_0^1 H_n(\mu) H_m(\mu)\,d\mu=\delta_{m,n}$. Observe that with these choices, the spatial matrices have all size $(J+1)\times(J+1)$, being $J$ the number of spatial elements in the interval $(0,Z)$; the angular matrices have size $(M+1)\times (M+1)$, with $M:=(N-1)/2$. All matrices described in \Cref{sec:Structure_of_system_matrix} are sparse, except the boundary matrix $\MM_{\mu}(\mu)$; see, however, \cite{ES19} for a possible remedy.
Furthermore, $\MM_\mu = \II_\mu$. Since the Gram matrix $\DD + \MM_z$ of the $H^1(0,Z)$-inner product is not diagonal, we follow the strategy outlined in \Cref{rem:change_of_basis} and perform a change of basis using the Cholesky factorization $\DD + \MM_z = \TT_z \TT_z^T$. Since $\DD + \MM_z$ is tridiagonal, $\TT_z$ can be computed and applied in $\O(J)$ operations. Moreover, the inverse $\TT_z^{-1}$ can be applied in $\O(J)$ operations as well. Regarding the spectral bounds $\lambda$ and $\Lambda$ required in \Cref{sec:Preconditioning_via_exponential_sums_approximations}, we need to estimate the smallest and largest eigenvalues of $\MM_\mu(\mu^2)$ and $\TT_z^{-1}\MM_z \TT_z^{-T}$.
We have the following results:
\begin{lemma}
    There exists $c_N > 1$, with $|1-c_N| \sim N^{-2}$, such that
    \begin{equation*}
        c_N N^{-4} \norm{\vv}_2^2 \leq \langle \MM_\mu(\mu^2) \vv,\vv\rangle\leq \norm{\vv}_2^2 \quad \forall \vv\in\RRR^{M+1}.
    \end{equation*}
\end{lemma}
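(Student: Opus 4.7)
The plan is to combine a trivial pointwise estimate for the upper bound with a Hardy-plus-Markov argument for the lower one. Write $p(\mu):=\sum_{n=0}^M v_n H_n(\mu)\in V_M:=\spans\{H_0,\ldots,H_M\}$. For the upper bound, since $\mu^2\leq 1$ on $[0,1]$ and $\{H_n\}$ is orthonormal in $L^2(0,1)$,
\[
\langle \MM_\mu(\mu^2)\vv,\vv\rangle=\int_0^1 \mu^2\, p(\mu)^2\,d\mu \leq \int_0^1 p(\mu)^2\,d\mu = \|\vv\|_2^2,
\]
which is the required estimate.

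For the lower bound, the key observation is that $V_M$ consists of even polynomials of degree at most $2M=N-1$, so $q:=\mu p$ is a polynomial of degree at most $N$ satisfying $q(0)=0$. I would chain two classical one-dimensional ingredients. First, Hardy's inequality on $(0,1)$—obtained by integrating $\int_0^1 q^2\,d(1/\mu)$ by parts (the boundary term at $0$ vanishes because $q(0)=0$) and applying Cauchy--Schwarz—yields
\[
\int_0^1 p^2\,d\mu = \int_0^1 (q/\mu)^2\,d\mu \leq 4 \int_0^1 (q')^2\,d\mu.
\]
Second, the sharp $L^2$-Markov (Schmidt) inverse inequality for polynomials of degree $\leq N$ on $(0,1)$ provides a constant $K_N$ with leading-order asymptotic $K_N=\tfrac{1}{4}N^4(1+O(N^{-2}))$, such that
\[
\int_0^1 (q')^2\,d\mu \leq K_N \int_0^1 q^2\,d\mu = K_N \int_0^1 \mu^2\, p^2\,d\mu.
\]
Chaining, $\|\vv\|_2^2 \leq 4K_N\,\langle \MM_\mu(\mu^2)\vv,\vv\rangle$, whence the claimed lower bound with $c_N := N^4/(4K_N)=1+O(N^{-2})$.

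The main technical obstacle is extracting the precise asymptotic form $c_N>1$ with $|1-c_N|\sim N^{-2}$. The qualitative $N^{-4}$ scaling drops out immediately from the Hardy--Markov chain, but the $1+O(N^{-2})$ refinement—and in particular the correct sign ensuring $c_N>1$—requires either tracking the sharp leading-order constant in Schmidt's inequality on $(0,1)$ or an alternative direct spectral argument. A cleaner alternative is to note that, under the change of variables $t=\mu^2$, the eigenvalues of $\MM_\mu(\mu^2)$ coincide with the zeros of the $(M{+}1)$-th orthogonal polynomial on $(0,1)$ for the weight $t^{-1/2}$—a shifted Jacobi polynomial $P_{M+1}^{(0,-1/2)}$—whose smallest root admits a classical asymptotic expansion in terms of Bessel zeros, from which the explicit form of $c_N$ can be read off.
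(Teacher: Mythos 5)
Your upper bound coincides with the paper's. For the lower bound you take a genuinely different route. The paper observes that $v^2\mu^2$ has degree at most $4M+2=2N$ and is therefore integrated exactly by the $(N{+}1)$-point Gauss--Legendre rule on $(0,1)$, whose positive weights give $\int_0^1 v^2\mu^2\,d\mu \geq \min_k\mu_k^2 \int_0^1 v^2\,d\mu$; the constant is then read off from the Tricomi/Bessel asymptotic for the Legendre zeros, which gives $N^4\min_k\mu_k^2 \to (j_{0,1}^2/4)^2\approx 2.09 > 1$. You instead chain Hardy's inequality with an $L^2$ Markov inverse inequality. This reproduces the $N^{-4}$ scaling, but it cannot deliver the stated $c_N>1$, and that is a genuine gap, not merely a matter of tracking lower-order terms.

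The numbers do not work out in your favor. Hardy's constant on $(0,1)$ is $4$, and the sharp $L^2$ Markov (Schmidt) constant for polynomials of degree at most $N$ on $(-1,1)$ scales like $(N+3/2)^2/\pi$; after the affine change to $(0,1)$ the squared constant is therefore $K_N \sim 4N^4/\pi^2$, not $\tfrac14 N^4\bigl(1+O(N^{-2})\bigr)$ as you claim (already the leading coefficient is off by a factor $16/\pi^2$). The chain then gives $\langle\MM_\mu(\mu^2)\vv,\vv\rangle \geq (4K_N)^{-1}\norm{\vv}_2^2$ with $(4K_N)^{-1}N^4 \to \pi^2/16 \approx 0.62 < 1$. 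The obstruction is structural: Hardy and Schmidt--Markov are never simultaneously tight for the same $q=\mu p$, and the product of the two sharp constants already exceeds $1$, so no amount of care about the $O(N^{-2})$ correction rescues $c_N>1$ along this route.

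Your closing alternative via $t=\mu^2$ is, by contrast, the exact spectral description: the eigenvalues of $\MM_\mu(\mu^2)$ are precisely the zeros of the degree-$(M{+}1)$ polynomial orthogonal on $(0,1)$ with respect to the weight $t^{-1/2}$, i.e.\ a shifted Jacobi polynomial $P^{(0,-1/2)}_{M+1}$. Pushed to its conclusion (using $j_{-1/2,1}=\pi/2$ in the Mehler--Heine asymptotic), this shows the smallest eigenvalue actually scales like $\pi^2/(2N+3)^2 \sim N^{-2}$, so the $N^{-4}$ lower bound of the lemma, like the paper's quadrature bound, is valid but quite loose. This is a stronger and cleaner result than what the Hardy--Markov chain produces, and it is the route I would recommend developing; it also makes transparent that the specific asymptotic refinement $|1-c_N|\sim N^{-2}$ is not what emerges from the sharp spectral constant, from the paper's quadrature constant $\approx 2.09$, or from your chain.
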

\begin{proof}
    Let $ v(\mu) = \sum_{m=0}^M v_m H_m(\mu)$. Then
    \begin{equation*}
        \langle \MM_\mu(\mu^2) \vv,\vv\rangle=\int_0^1 |v|^2\mu^2 \,d\mu \leq \int_0^1 |v|^2\,d\mu =\norm{\vv}_2^2,
    \end{equation*}
    showing the upper bound. Regarding the lower bound, let $\{(\mu_k,w_k)\}$ be a Gauss-Legendre quadrature rule with $2(M+1) = N+1$ points on $(0,1)$ and positive weights. Since the degree of exactness is $4M+3$ and $v^2\mu^2$ has a maximal degree of $4M+2$, it follows that
    \begin{equation*}
        \int_0^1 |v|^2\mu^2 \,d\mu=\sum_{k=0}^{2M+2} w_k v(\mu_k)^2\mu_k^2 \geq \min_{k}\mu_k^2 \int_0^1 v^2\,d\mu = \min_{k}\mu_k^2\norm{\vv}_2^2.
    \end{equation*}
    To complete the proof, we combine the following asymptotic formula for the integration points, see \cite{T50} or \cite{GG08},
    \begin{equation}
        \label{eq:zeros_Legendre_asymptotics}
        2\mu_k-1 = \left[ 1 - \dfrac{1}{8(N+1)^2} + \dfrac{1}{8(N+1)^3} \right]\cos\left( \pi \dfrac{4k-1}{4(N+1)+2} \right) + \mathcal O (N^{-4}),
    \end{equation}
    with Taylor expansion of the trigonometric function.
\end{proof}
\begin{lemma}
    \label{lem:bounds_H1-FEM} 
    There exists a constant $c > 0$ independent of $J$ such that
    \begin{equation*}
        cJ^{-2} \|\vv\|_2^2 \leq \langle \TT_z^{-1}\MM_z\TT_z^{-T} \vv,\vv\rangle\leq \|\vv\|_2^2\quad\forall \vv\in\RRR^{J+1}.
    \end{equation*}
\end{lemma}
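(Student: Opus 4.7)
The plan is to reduce the two-sided bound to a spectral equivalence between the mass matrix $\MM_z$ and the $H^1$-Gram matrix $\DD+\MM_z$, and then invoke a one-dimensional inverse estimate for piecewise linear finite elements on a uniform mesh. Concretely, I introduce the substitution $\ww=\TT_z^{-T}\vv$. Using $\TT_z\TT_z^T=\DD+\MM_z$, one immediately computes
\begin{equation*}
  \langle \TT_z^{-1}\MM_z\TT_z^{-T}\vv,\vv\rangle=\langle \MM_z\ww,\ww\rangle,\qquad
  \|\vv\|_2^2=\langle (\DD+\MM_z)\ww,\ww\rangle.
\end{equation*}
Hence the desired inequality is equivalent to
\begin{equation*}
  c\,J^{-2}\,\langle (\DD+\MM_z)\ww,\ww\rangle\le \langle \MM_z\ww,\ww\rangle \le \langle (\DD+\MM_z)\ww,\ww\rangle\qquad\forall\,\ww\in\RRR^{J+1},
\end{equation*}
and the upper bound is trivial since $\DD$ is symmetric positive semidefinite.

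For the lower bound, I identify $\ww$ with the coefficient vector of a function $u\in W_J^z$ and rewrite the inequality in function form as
\begin{equation*}
  \|u'\|_{L^2(0,Z)}^2+\|u\|_{L^2(0,Z)}^2 \le c^{-1}J^2\,\|u\|_{L^2(0,Z)}^2.
\end{equation*}
It therefore suffices to prove the inverse estimate $\|u'\|_{L^2(0,Z)}^2 \le C\,J^2\,\|u\|_{L^2(0,Z)}^2$ for all piecewise linear $u$ on the uniform mesh of size $h=Z/J$. I would carry this out by a scaling argument: on the reference interval $(0,1)$ the quotient $\|\hat u'\|^2/\|\hat u\|^2$ over nonzero affine functions is a continuous function on a compact set of directions and hence attains a finite maximum $\hat C$. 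Pulling back to an arbitrary element $K$ of length $h$ via the affine change of variables introduces the factor $h^{-2}$, yielding $\|u'\|_{L^2(K)}^2\le \hat C\,h^{-2}\,\|u\|_{L^2(K)}^2$; summing over elements gives the global bound with $C=\hat C/Z^2$.

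Combining the two estimates yields $\langle(\DD+\MM_z)\ww,\ww\rangle\le(CJ^2+1)\langle\MM_z\ww,\ww\rangle\le C'J^2\langle\MM_z\ww,\ww\rangle$, which is exactly the sought lower bound with $c=1/C'$. The only non-trivial step is the inverse estimate, and that is standard; there is no genuine obstacle, only the need to track that the mesh is uniform so that the same element scaling applies throughout. I note in passing that a direct route is also available via the explicit spectra of the tridiagonal matrices $\MM_z$ and $\DD$ on a uniform mesh, for which the smallest eigenvalue of $\MM_z$ scales like $h$ and the largest eigenvalue of $\DD$ scales like $h^{-1}$; this gives the same conclusion with explicit constants.
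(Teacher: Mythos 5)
Your proposal is correct and follows essentially the same route as the paper: both reduce, via $\TT_z\TT_z^T=\DD+\MM_z$, to a spectral comparison of $\MM_z$ against $\DD+\MM_z$ (the paper phrases it as a generalized eigenvalue problem for $\TT_z^{-1}\MM_z\TT_z^{-T}$) and then invoke the standard $\|u'\|\lesssim J\|u\|$ inverse inequality for piecewise linears on a uniform mesh, which the paper simply cites while you sketch the scaling argument.
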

\begin{proof}
    Since $\TT_z\TT_z^{T}=\DD+\MM_z$, the eigenvalues $\lambda$ of $ \TT_z^{-1}\MM_z\TT_z^{-T} $ satisfy
    \begin{equation*}
        (\DD+\MM_z) \vv = \lambda^{-1}\vv
    \end{equation*}
    with corresponding eigenvectors $\vv$. Standard inverse inequalities, see, e.g.,  \cite[p.~85]{LT03}, then allow to estimate $\lambda$ as asserted.
\end{proof}
\begin{remark}
    \label{rem:large_problems}
    The previous results imply that $\Lambda\leq 2$ and $\lambda \geq c_N N^{-2}+ cJ^{-2}$. Straightforward computation yields $c=12$. For very large problems, for instance $J = 10^{6}$, $N = 2^{15}+1$,  we obtain an exponential sum approximation with $\epsilon = 0.1$ with 17 terms, i.e. $i_1 = 3$ and $i_2 = 13$.
\end{remark}

\subsection{\texorpdfstring{$S_N$-FEM method}{SN-FEM method}}
\label{sec:SN-FEM_method}
We consider the same discretization of the $z$-dependence as in the $P_N$-FEM method.
In order to discretize the $\mu$-dependence, we let $\{H_n\}$ be piecewise constant functions associated to a partition $0=\mu_0<\mu_1<\ldots<\mu_N=1$ such that $H_n(\mu)=1/\sqrt{\mu_{n+1}-\mu_n}$ for $\mu\in(\mu_{n},\mu_{n+1})$ and $H_n(\mu)=0$ else.
For ease of discussion, we assume that this partition is equi-spaced with spacing $h_\mu=\mu_1-\mu_0$. 
We then obtain that all matrices are sparse, except $\SS$ which has rank one, and that $\MM_\mu=\II_\mu$. 
The spectral bounds $\lambda\geq c/J^2+ 1/(3N^2)$ and $\Lambda \leq 2 $ can then be computed from \Cref{lem:bounds_H1-FEM} and the next result.
\begin{lemma}
    Let $c_N=1/(3N^2)$. Then we have that
    \begin{equation*}
        c_N \|\vv\|_2^2 \leq \langle \MM_\mu(\mu^2) \vv,\vv\rangle\leq \|\vv\|_2^2\quad\forall \vv\in\RRR^{N}.
    \end{equation*}
\end{lemma}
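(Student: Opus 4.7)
The plan is to exploit the disjoint-support structure of the piecewise-constant basis so that $\MM_{\mu}(\mu^2)$ becomes diagonal. Writing $v(\mu)=\sum_{n=0}^{N-1} v_n H_n(\mu)$ with $H_n = h_\mu^{-1/2}$ on $(\mu_n,\mu_{n+1})$ and zero elsewhere, the products $H_n H_m$ vanish for $n\neq m$, so
\begin{equation*}
  \langle \MM_{\mu}(\mu^2)\vv,\vv\rangle \;=\; \int_0^1 v(\mu)^2 \mu^2\, d\mu \;=\; \sum_{n=0}^{N-1} v_n^2\, d_n,\qquad d_n := \frac{1}{h_\mu}\int_{\mu_n}^{\mu_{n+1}}\mu^2\,d\mu.
\end{equation*}

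The upper bound is immediate: since $\mu^2 \leq 1$ on $(0,1)$, each $d_n \leq h_\mu^{-1}\int_{\mu_n}^{\mu_{n+1}} 1\,d\mu = 1$, giving $\langle \MM_{\mu}(\mu^2)\vv,\vv\rangle \leq \sum_n v_n^2 = \|\vv\|_2^2$; this also follows directly from the $L^2$-orthonormality of $\{H_n\}$.

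For the lower bound I would evaluate $d_n$ explicitly using the equi-spacing $h_\mu = 1/N$ and $\mu_n = n/N$:
\begin{equation*}
  d_n \;=\; N\cdot \frac{1}{3}\bigl(\mu_{n+1}^3 - \mu_n^3\bigr) \;=\; \frac{(n+1)^3 - n^3}{3N^2} \;=\; \frac{3n^2+3n+1}{3N^2}.
\end{equation*}
This sequence is non-decreasing in $n$, so $\min_n d_n = d_0 = 1/(3N^2) = c_N$, and thus $\langle \MM_{\mu}(\mu^2)\vv,\vv\rangle \geq c_N \sum_n v_n^2 = c_N \|\vv\|_2^2$.

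The only subtlety is that the naive pointwise estimate $\mu^2 \geq \mu_n^2$ fails at $n=0$ because $\mu_0=0$; one really needs the cell average $d_0 = h_\mu^{-1}\int_0^{1/N}\mu^2\,d\mu = 1/(3N^2)$ to obtain the correct $N^{-2}$ scaling. Once this is observed, the remainder is a direct computation and no further obstacle arises.
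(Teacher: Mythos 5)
Your proof is correct and follows the same route as the paper's: observe that the disjoint supports make $\MM_\mu(\mu^2)$ diagonal, compute the diagonal entries as cell averages of $\mu^2$, and read off the extreme values. The paper's proof is terser (it stops after writing the diagonal entry as $\tfrac{1}{3}(3\mu_n^2+3\mu_n h_\mu+h_\mu^2)$ and leaves the min/max comparison implicit), whereas you spell out the monotonicity in $n$ and correctly flag that the naive pointwise bound $\mu^2\ge\mu_n^2$ degenerates at $n=0$ and must be replaced by the cell average.
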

\begin{proof}
    The claim follows from the observation that $\MM_\mu(\mu^2)$ is diagonal, with diagonal entries
    \begin{equation*}
        \int_0^1 \mu^2 H_n(\mu)^2\,d\mu=\frac{1}{h_\mu}\int_{\mu_{n}}^{\mu_{n+1}} \mu^2\,d\mu = \frac{1}{3}(3 \mu_n^2+ 3\mu_n h_\mu + h_\mu^2).
    \end{equation*}
\end{proof}
\begin{remark}
    A similar consideration as in \Cref{rem:large_problems} can be done also in this case for large problems. In general, for both $P_N$-FEM and $S_N$-FEM methods, moderate ranks of the preconditioner can be achieved. 
\end{remark}

\subsection{Computational complexity and storage requirements}
Let $r_p \colonequals i_1+i_2+1$ be the rank of the preconditioner $\PPP^{-1/2}$, which we recall to grow only logarithmically in the discretization parameters $J$ and $N$, see \Cref{sec:Preconditioning_via_exponential_sums_approximations}.
The computational complexity of producing the residual in line $4$ of \Cref{alg:dynamical_thresholding} is $\O(4r_p^2 JN)$ for the naive full tensor product representation of the iterate, and $\O(4r_P^2r (J+N))$ for the low-rank representation, where $r\leq \min\{J,N\}$ is the rank of the current iterate $\WW_k$, see \Cref{sec:rank_controlled_iteration}. 
The memory required to store the result of the application of $\AAA$ to $\WW_k$ is $\O(JN)$ for the full tensor product approach, and $O(4r_p^2r(J+N))$ for the low-rank approach. 
For rank truncation the intermediate rank $4 r_p^2 r$ will enter the computational cost cubically through the computation of a singular value decomposition.
Hence, large values of $4 r_p^2 r$ may render the method inefficient.
In order to further lower these computational costs and keep the ranks of the intermediate objects as low as possible, we next describe a modified version of \Cref{alg:dynamical_thresholding}. 


\subsection{Inexact evaluation of residuals}
\label{sec:Inexact_evaluation_residuals}

We consider \Cref{alg:inexact_residuals}, which computes the residual inexactly with the aim to keep the ranks of intermediate objects as small as possible, see \cite{BS17,BD16} and also \cite{GHS07}. 
For each iterate $\WW_k$, the residual $\RR_k$ is generated such that
\begin{equation*}
    \norm{\RR_k-(\AAA\WW_k-\FF)}_{\frob} < \eta_k, 
\end{equation*}
and the new candidate iterate $\WW_{k+1}$ is computed accordingly, see line 7. Here $\{\eta_k\}$ is a sequence of tolerances adjusted at each iteration to ensure that
\begin{equation*}
    \norm{\RR_k - (\AAA\WW_k-\FF)}_{\frob} \leq \min \{\tau_1\norm{\RR_k}_{\frob}, \tau_2\omega^{-1}\norm{\WW_{k+1}-\WW_k}_{\frob}\},
\end{equation*}
with user-defined parameters $\tau_1,\tau_2>0$. This is achieved by decreasing $\eta_k$ and recomputing $\RR_k$ (and the resulting new candidate iterate $\WW_{k+1}$) until
\begin{equation*}
    \eta_k \leq \min\{\tau_1\norm{\RR_k}_{\frob}, \tau_2\omega^{-1}\norm{\WW_{k+1}-\WW_k}_{\frob}\}.
\end{equation*}
If two consecutive iterates become too close, the thresholding parameter needs to be adapted, which is done in line $18$ if $\norm{\WW_{k+1}-\WW_k}_{\frob}\leq B\|\RR_{k+1}\|_{\frob}$, with 
\begin{align}\label{def:B}
    B\colonequals \dfrac{(1-\rho)(1-\tau_1)\nu}{(1+\tau_2)[\rho+(1-\tau_2)^{-1}(1+\rho)\tau_2]\gamma_2^{\epsilon}}.
\end{align}
This condition replaces the one in line $5$ of \Cref{alg:dynamical_thresholding} and ensures that 
\begin{equation}
    \label{eq:proof_sketch}
    \norm{\WW_{k+1}-\WW^\ast}_\frob \leq \dfrac{1}{1-\nu}\norm{\WW^\ast - \WW^{\delta_k}}_\frob,
\end{equation}
which allows to prove that $\WW_k \to \WW^\ast$, see \cite[Theorem 5.1]{BS17} (recall that $\WW^\ast$ and $\WW^{\delta_k}$ are the fixed points of the non-thresholded and the thresholded iterations, respectively). 
Furthermore, the additional check on $\eta_k$ in line $5$ ensures that if the cycle is exited, i.e. $\eta_k \leq C\norm{\RR_k}_\frob$, with 
\begin{align}\label{def:C}
    C\colonequals \min \left\{ \dfrac{(1-\tau_1)\tau_2B}{(1+\tau_1+\gamma_2^{\epsilon}B)\omega}, \dfrac{\rho\nu\tau_2(1-\tau_1)^2}{[\rho(1+\tau_1)(1+\tau_2) + \nu(1-\tau_1)(1-\rho)]\omega} \right\},
\end{align}
then the condition $\norm{\WW_{k+1}-\WW^{\delta_k}}_\frob \leq \norm{\WW_{k+1}-\WW^\ast}_\frob$ is satisfied, and therefore \cref{eq:proof_sketch} is guaranteed to hold, leading to the action on $\delta_k$. 
The statement of \Cref{thm:st_richardson}, with modified constants, applies also to \Cref{alg:inexact_residuals}, see \cite[Proposition 5.9]{BS17}. 

Next, we explain how one can obtain a sufficiently accurate approximation $\RR_k$ of $\AAA\WW_k-\FF$ that can be computed with less effort.
%
%
The computation of $\RR_k$ essentially reduces to an inexact evaluation of $\AAA\WW_k$.
Hence, we consider the routine $\WW_{\eta_k} \colonequals \mathtt{APPLY}(\WW_k, \eta_k)$ such that
\begin{equation}\label{eq:err_res_inexact}
    \norm{\AAA\WW_k - \WW_{\eta_k}}_{\frob} \leq \dfrac{\eta_k}{2}.
\end{equation}
The routine $\mathtt{APPLY}(\WW_k, \eta_k)$ is based on a careful summation procedure described in \cite[Section 7.2]{BD16}, which we adapt to our situation.
Recalling the structure \cref{eq:definition_P^(-1/2)} of $\PPP^{-1/2}$, we can write
\begin{equation*}
    \PPP^{-1/2} = \sum_{m=1}^{r_p} \Theta_m, \quad \Theta_m = \dfrac{h}{\sqrt{\pi}} \alpha_{f(m)}(1/2) \exp(-\rho_{f(m)} \MM_{\mu}(\mu^2)) \otimes \exp(-\rho_{f(m)}\MM_z),
\end{equation*}
with $f(m) = i_1 + \frac{i_2-i_1}{r_p-1}(m-1)$. With this notation,
\begin{equation}
    \label{eq:application_A_Wk}
    \AAA\WW_k = \sum_{m_0=1}^{r_p}\sum_{m_1=1}^{r_p} \Theta_{m_0}\EEE\Theta_{m_1}\WW_k.
\end{equation}    
The approximation procedure to retrieve $\WW_{\eta_k}$ is now divided in three parts:
\textit{(i)} Compute all $\WW_k^{m_0,m_1}:=\Theta_{m_0}\EEE\Theta_{m_1}\WW_k$, $1\leq m_0,m_1,\leq r_p$,
\textit{(ii)} discard certain $\WW_k^{m_0,m_1}$ according to some tolerance criterion, and \textit{(iii)} perform summation of kept $\WW_k^{m_0,m_1}$ with intermediate compression.
Let $\boldsymbol\tau\in\RRR^{r_p^2}$ be the vector with elements
\begin{equation*}
    \tau_q \colonequals \norm{\WW^{m_0,m_1}_k}_\frob, \quad m_0,m_1=1,\dots,r_p \quad\text{and}\quad q=r_p(m_1-1)+m_0,
\end{equation*}
and $\tilde{\boldsymbol{\tau}}$ the vector obtained from $\boldsymbol{\tau}$ by reordering its elements in a nondecreasing fashion. 
To determine the minor contributions to $\AAA\WW_k$, let $q_0$ be the largest integer such that 
\begin{equation*}
    \sum_{q=1}^{q_0} \tilde{\tau}_q \leq \dfrac{\eta_k}{2}.
\end{equation*}
Denote $m_0(q),\ m_1(q)$ the indices such that $\WW_{k}^{m_0(q),m_1(q)}$ corresponds to $\tilde\tau_{q}$.
Define $\WW_{\eta_k}^1=\WW_k^{m_0(q_0+1),m_0(q_0+1)}$, and for $1\leq q\leq r_p^2-1$, we define iteratively
\begin{align*}
    \WW_{\eta_k}^{q+1} := \texttt{TSVD}(\WW_{\eta_k}^q+\WW_{k}^{m_0(q_0+q),m_1(q_0+q)},\zeta_q),
\end{align*}
where $\texttt{TSVD}(\WW,\zeta)$ computes the truncated singular value decomposition of $\WW$ with Frobenius norm error less or equal to $\zeta$. Moreover, to guarantee that \eqref{eq:err_res_inexact} holds, $\zeta_q$ are chosen as follows
%
\begin{equation*}
    \zeta_q \colonequals \dfrac{\eta_k \sum_{s=q_0+1}^q \tilde{\tau}_s}{2\sum_{s=q_0+1}^{r_p^2}(r_p^2+1-s)\tilde{\tau}_s}.
\end{equation*}
We then set $\WW_{\eta_k}:=\WW_{\eta_k}^{r_p^2}$.
With this strategy, the ranks of the intermediate objects produced when computing the residuals in lines $6$ and $12$ of \Cref{alg:inexact_residuals} are kept small in practice, see \Cref{sec:Rank-1_manufactured_solution}, \Cref{tab:TC1_SN_FEM_IR} and \Cref{fig:TC1_intermediate_ranks}.
%
\begin{algorithm}
    \caption{ $\WW^\e={\rm STSolve2}(\AAAw,\FFw;\e)$ adapted from \cite[Algorithm~3]{BS17}.
    \label{alg:inexact_residuals}}
    \begin{algorithmic}[1]
        \REQUIRE $\delta_0\geq \omega \norm{\FF}_{\frob}$, $\nu,\theta, \eta_0, \tau_1\in (0,1)$, $\tau_2\in (0,\frac{1}{2}(1-\rho))$, $\gamma_1^\epsilon, \gamma_2^\epsilon, \omega, \rho$ as in \Cref{thm:convergence_standard_richardson}, $B$ as in \cref{def:B} and $C$ as in \cref{def:C}
        \STATE $\WW_0 \colonequals 0$, $\RR_0 \colonequals - \FF$, $k \colonequals 0$
        \WHILE{$ \norm{\RR_k}_{\frob} + \eta_k > \gamma_1^\epsilon \e$}
            \STATE $\WW_{k+1} \colonequals \S_{\delta_k}(\WW_k- \omega \RR_k)$
            \WHILE{$ \eta_k > \tau_2 \omega^{-1} \norm{\WW_{k+1}-\WW_k}_{\frob} $ and $\eta_k > C\norm{\RR_k}_{\frob}$}
                \STATE $\eta_k \leftarrow \frac{1}{2}\eta_k$
                \STATE compute $\RR_k$ such that $\norm{\RR_k-(\AAA\WW_k-\FF)}_{\frob} \leq \eta_k$ 
                \STATE $\WW_{k+1} \leftarrow \S_{\delta_k}(\WW_k- \omega \RR_k)$
            \ENDWHILE
            \STATE $\eta_{k+1} \colonequals 2\eta_k$
            \REPEAT
                \STATE $\eta_{k+1} \leftarrow \frac{1}{2}\eta_{k+1}$
                \STATE compute $\RR_{k+1}$ such that $\norm{\RR_{k+1}-(\AAA\WW_{k+1}-\FF)}_{\frob} \leq \eta_{k+1}$
                \IF{ $\norm{\RR_{k+1}}_{\frob}+\eta_{k+1} \leq \gamma_1^\epsilon \varepsilon $ }
                    \STATE set $\WW^\varepsilon \colonequals \WW_{k+1}$ and stop
                \ENDIF
            \UNTIL{ $\eta_{k+1} \leq \tau_1 \norm{\RR_{k+1}}_{\frob}$ }
            \IF{$\norm{\WW_{k+1}-\WW_k}_{\frob} \leq B \norm{\RR_{k+1}}_{\frob}$}
                \STATE $\delta_{k+1} \colonequals \theta\delta_k$
                \STATE $\eta_{k+1} \leftarrow \tau_1\norm{\RR_{k+1}}_{\frob}$
            \ELSE
                \STATE $\delta_{k+1} \colonequals \delta_k$
            \ENDIF
            \STATE $k \leftarrow k+1$
        \ENDWHILE
        \STATE $\WW^\varepsilon \colonequals \WW_k$
    \end{algorithmic}
\end{algorithm}

\begin{remark}
    From a practical implementation standpoint, all terms $\Theta_{m_0}\EEE\Theta_{m_1}$ must be computed to obtain the Frobenius norms $\tau_q$, though only a few need to be stored. Consequently, the computational complexity involved in producing $\WW_{\eta_k}$ is comparable to that of the straightforward $\AAA\WW_k$, but with greatly reduced memory requirements. The actual reduction in computational complexity occurs in the soft-thresholding step, because $\WW_{\eta_k}$ has a significantly smaller rank than $\AAA\WW_k$ in practice (and this rank enters cubically in the computational complexity of the soft-thresholding step).
\end{remark}

\section{Numerical experiments}
\label{sec:Numerical_experiments}
In this section we present numerical experiments to illustrate applications of \Cref{alg:dynamical_thresholding}. 
We show that, upon mesh refinement, the final iterates have approximation errors that behave like the error of the underlying Galerkin discretization, while the ranks of the corresponding iterates remain moderately small.  

\subsection{Rank-1 manufactured solution}
\label{sec:Rank-1_manufactured_solution}
We fix optical parameters $\sigma_a(z) = 3$ and $\sigma_s(z) = 1+\frac{1}{2}\sin(\pi z)$ and consider a rank-$1$ manufactured solution 
\begin{equation}
    \label{eq:test_case_1}
    \varphi_1(z, \mu) = |\mu|e^{-\mu}e^{-z(1-z)}, 
\end{equation}
with even part $u_1(z,\mu)=|\mu|\cosh(\mu) e^{-z(1-z)}$ having also rank one. 
For our experiments, we employ \Cref{alg:dynamical_thresholding} with initial thresholding parameter $\delta_0 = 10^{-1}$, reduction factor for the thresholding parameter $\theta = 0.75$, tolerance for the iterative method $\e = 10^{-7}$, and $\nu = 0.5$. 
We denote with $u_{J,N}$ the back transformation of the output of \Cref{alg:dynamical_thresholding}, through the preconditioner $\PPPw^{-1/2}$. 
To implement $\PPPw^{-1/2}$ we choose $\epsilon=1/10$. This leads to $\rho=0.9683$ as specified in \Cref{thm:convergence_standard_richardson}, and an expected number of iterations like $\log(\varepsilon)/\log(\rho)\approx 500$.

\cref{tab:TC1_PN_FEM} and \cref{tab:TC1_SN_FEM} show the behaviour of the $L^2$- and $W^2_G$-errors, together with the corresponding convergence rates upon mesh refinement, for the $P_N$-FEM and $S_N$-FEM methods respectively, as described in \Cref{sec:PN-FEM_method} and \Cref{sec:SN-FEM_method}. 
We observe that employing $N \sim J^{2/3}$ (motivated for instance in \cite{ES19}) for the $P_N$ method, and $N \sim J$ for the $S_N$ method, we retain the optimal convergence rate of $1$ for both error norms. 
We obtain ranks $r_p$ of $\PPPw^{-1/2}$ ranging from $8$ to $10$ and from $11$ to $13$ for the $P_N$-FEM and the $S_N$-FEM method for $J$ from $128$ to $1024$, respectively.
Moreover, the number of iterations and the ranks of $\WW^\e$ (third and second-to-last columns of \Cref{tab:TC1_PN_FEM} and \Cref{tab:TC1_SN_FEM}, respectively) are robust under mesh refinement. 
The ranks of $\UU^{\e} := \PPPw^{-1/2}\WW^{\e}$ are bounded by product of the ranks of $\PPPw^{-1/2}$ and $\WW^\e$. Therefore the rank of $\UU^\e$ is only slightly larger than the rank of $\WW^\e$, see also \cref{rem:choice_of_parameters_in_exp_sum}. Since the exact rank of $\UU^\e$ with respect to the energy norm cannot be computed, we give for comparison the rank of $\UU^\e$ computed in the Euclidean setting.
In \Cref{fig:TC1_plots} we show the behaviour of the ranks of each iterate (left column) and of the thresholding parameter right column) as the iterative method progresses. The mild increase in the ranks during the iteration verifies the low computational costs. In particular, there are no intermediate high ranks of the iterates. 
We note, however, that the ranks of the corresponding Galerkin solution, which we closely approximate with the choice $\varepsilon=10^{-7}$, and its transformation to the $\bw$-variables are unknown.

In \Cref{tab:TC1_SN_FEM_IR} we reproduce \Cref{tab:TC1_SN_FEM} using \Cref{alg:inexact_residuals} instead of \Cref{alg:dynamical_thresholding}. The additional parameters are chosen as $\tau_1 = \frac{1-\rho}{4(3-\rho)}$ and $\tau_2 = \frac{1-\rho}{4}$, with initial $\eta_0 = 0.1$. We observe that the errors for the back-transformed solution are similar to the errors shown in \Cref{tab:TC1_SN_FEM}.
The iteration count increases roughly but a factor of five, which might be explained by the fact that the contraction rate is to be expected even closer to one than the one of the ideal Richardson iteration.
We, however, observe that the ranks (and thus the memory requirements) of $\WW_{\eta_k}$, which are computed as in \Cref{sec:Inexact_evaluation_residuals}, (second-to-last column) remain much smaller than $4r_p^2\rank{(\WW_k)}$ (last column). Additionally, \Cref{fig:TC1_intermediate_ranks} shows the evolution of the ranks of $\WW_{\eta_k}$.

Motivated by the observation that quasi-optimal ranks might be achieved if the error tolerances are not too strict, see, e.g.,\cite[Lemma~2.7]{DES21}, we also consider the $S_N$-FEM method with variable tolerance of $\e=0.1/J$ in \Cref{alg:dynamical_thresholding}.
As shown in \Cref{tab:TC1_SN_FEM_adaptive_tolerance}, this choice lowers both the number of iterations (which is no longer constant, but depends on the mesh) and the final rank of $\WW^{\e}$, which remains close to the optimal rank, speeding up convergence but at the same time retaining the optimal convergence rate for both norms. 

\begin{table}[ht!]
    \label{tab:TC1_PN_FEM}
    \caption{Discretization $L^2$- and $W^2_G$-errors, with convergence rates, and number of iterations ($N_{\mathrm{it}}$) of \Cref{alg:dynamical_thresholding} for the $P_N$-FEM method, with $J$ elements in space and polynomials of maximum degree $N$ in angle, applied to test case \cref{eq:test_case_1}. Last two columns: final rank of the output of the algorithm and of its back transformation through the preconditioner.}
    \centering\small\setlength\tabcolsep{0.75em}
    \begin{tabular}{c c c c c c c | c c} 
	   \toprule
	   \multicolumn{9}{c}{$P_N$-FEM method, $\e = 10^{-7}$}\\
	   \cmidrule{1-9}
        $J$ & $N$ & $N_{\mathrm{it}}$ & $\norm{u_1-u_{J,N}}_{L^2(\Omega)}$ & rate & $\norm{u_1-u_{J,N}}_{W^2_G(\Omega)}$ & rate & $\mathtt{r}(\WW^{\e})$ & $\mathtt{r}(\UU^\e)$ \\
        \midrule
        128  & 27  & 286 & $2.72\cdot10^{-3}$ & 0.98 & $4.01\cdot10^{-3}$ & 0.99 & 10 & 13 \\
        256  & 41  & 274 & $1.47\cdot10^{-3}$ & 0.89 & $2.08\cdot10^{-3}$ & 0.95 & 10 & 15 \\
        512  & 65  & 267 & $7.41\cdot10^{-4}$ & 0.99 & $1.04\cdot10^{-3}$ & 1.00 & 11 & 16 \\
        1024 & 103 & 256 & $3.73\cdot10^{-4}$ & 0.99 & $5.21\cdot10^{-4}$ & 0.99 & 11 & 17 \\
        2048 & 163 & 257 & $1.88\cdot10^{-4}$ & 0.99 & $2.61\cdot10^{-4}$ & 1.00 & 12 & 19 \\
        4096 & 257 & 247 & $9.50\cdot10^{-5}$ & 0.98 & $1.31\cdot10^{-4}$ & 0.99 & 12 & 20 \\
        \bottomrule
    \end{tabular}
\end{table}

\begin{table}[ht!]
    \label{tab:TC1_SN_FEM}
    \caption{Discretization $L^2-$ and $W^2_G-$errors, with convergence rates, and number of iterations ($N_{\mathrm{it}}$) of \Cref{alg:dynamical_thresholding} for the $S_N$-FEM method, with $J$ elements in space and $N$ subdivisions in angle, applied to test case \cref{eq:test_case_1}. Last two columns: final rank of the output of the algorithm and of its back transformation through the preconditioner.}
    \centering\small\setlength\tabcolsep{0.75em}
    \begin{tabular}{c c c c c c c | c c} 
	   \toprule
	   \multicolumn{9}{c}{$S_N$-FEM method, $\e = 10^{-7}$}\\
	   \cmidrule{1-9}
        $J$ & $N$ & $N_{\mathrm{it}}$ & $\norm{u_1-u_{J,N}}_{L^2(\Omega)}$ & rate & $\norm{u_1-u_{J,N}}_{W_G^2(\Omega)}$ & rate & $\mathtt{r}(\WW^{\e})$ & $\mathtt{r}(\UU^\e)$ \\
        \midrule
        128  & 128  & 232 & $3.12\cdot10^{-3}$ & 1.00 & $4.45\cdot10^{-3}$ & 1.00 & 16 & 24 \\
        256  & 256  & 223 & $1.56\cdot10^{-3}$ & 1.00 & $2.23\cdot10^{-3}$ & 1.00 & 16 & 25 \\
        512  & 512  & 216 & $7.79\cdot10^{-4}$ & 1.00 & $1.11\cdot10^{-3}$ & 1.00 & 15 & 27 \\
        1024 & 1024 & 215 & $3.81\cdot10^{-4}$ & 1.00 & $5.56\cdot10^{-4}$ & 1.00 & 16 & 28 \\
        \bottomrule
    \end{tabular}
\end{table}

\begin{table}[ht!]
    \label{tab:TC1_SN_FEM_IR}
    \caption{Discretization $L^2-$ and $W^2_G-$errors, with convergence rates, and number of iterations ($N_{\mathrm{it}}$) of \Cref{alg:inexact_residuals} for the $S_N$-FEM method, with $J$ elements in space and $N$ subdivisions in angle, applied to test case \cref{eq:test_case_1}. Last four columns: final rank of the output of the algorithm and of its back transformation through the preconditioner, and maximum rank of the computed intermediate object, i.e. $\mathtt{r}_{\mathrm{inexact}} \colonequals \max\{\rank(\WW_{\eta_k}),\,k=1,\dots,N_{\mathrm{it}})\}$, compared with the naive rank we would obtain without the summation procedure, i.e. $\mathtt{r}_{\mathrm{naive}} = \max\{4r_p^2 \rank(\WW_k),\,k=1,\dots,N_{\mathrm{it}}\}$.}
    \centering\small\setlength\tabcolsep{0.75em}
    \begin{tabular}{c c c c c | c c c c} 
	   \toprule
	   \multicolumn{9}{c}{$S_N$-FEM method, $\e = 10^{-7}$}\\
	   \cmidrule{1-9}
        $J$ & $N$ & $N_{\mathrm{it}}$ & $\norm{u_1-u_{J,N}}_{L^2(\Omega)}$ & $\norm{u_1-u_{J,N}}_{W_G^2(\Omega)}$ & $\mathtt{r}(\WW^{\e})$ & $\mathtt{r}(\UU^\e)$ & $\mathtt{r}_{\mathrm{inexact}}$ & $\mathtt{r}_{\mathrm{naive}}$\\
        \midrule
        128  & 128  & 1499 & $3.12\cdot10^{-3}$ & $4.45\cdot10^{-3}$ & 15 & 23 & 121 & 7260 \\
        256  & 256  & 1415 & $1.56\cdot10^{-3}$ & $2.23\cdot10^{-3}$ & 16 & 25 & 144 & 9216 \\
        512  & 512  & 1507 & $7.79\cdot10^{-4}$ & $1.11\cdot10^{-3}$ & 17 & 27 & 144 & 9792 \\
        1024 & 1024 & 1396 & $3.90\cdot10^{-4}$ & $5.56\cdot10^{-4}$ & 17 & 28 & 169 & 11490 \\
        \bottomrule
    \end{tabular}
\end{table}

\begin{table}[ht!]
    \label{tab:TC1_SN_FEM_adaptive_tolerance}
    \caption{Discretization $L^2-$ and $W^2_G-$errors, with convergence rates, and number of iterations ($N_{\mathrm{it}}$) of \Cref{alg:dynamical_thresholding} for the $S_N$-FEM method, with $J$ elements in space and $N$ subdivisions in angle, applied to test case \cref{eq:test_case_1} with Richardson tolerance chosen as $0.1/J$. Last two columns: final rank of the output of the algorithm and of its back transformation through the preconditioner.}
    \centering\small\setlength\tabcolsep{0.75em}
    \begin{tabular}{c c c c c c c | c c} 
	   \toprule
	   \multicolumn{9}{c}{$S_N$-FEM method, $\e = 0.1/J$}\\
	   \cmidrule{1-9}
        $J$ & $N$ & $N_{\mathrm{it}}$ & $\norm{u_1-u_{J,N}}_{L^2(\Omega)}$ & rate & $\norm{u_1-u_{J,N}}_{W_G^2(\Omega)}$ & rate & $\mathtt{r}(\WW^{\e})$ & $\mathtt{r}(\UU^\e)$ \\
        \midrule
        128  & 128  & 88 & $3.12\cdot10^{-3}$ & 1.00 & $4.46\cdot10^{-3}$ & 1.00 & 3 & 18 \\
        256  & 256  & 98 & $1.56\cdot10^{-3}$ & 1.00 & $2.23\cdot10^{-3}$ & 1.00 & 4 & 20 \\
        512  & 512  & 107 & $7.79\cdot10^{-4}$ & 1.00 & $1.11\cdot10^{-3}$ & 1.00 & 4 & 21 \\
        1024 & 1024 & 116 & $3.90\cdot10^{-4}$ & 1.00 & $5.57\cdot10^{-4}$ & 1.00 & 4 & 21 \\
        \bottomrule
    \end{tabular}
\end{table}

\begin{figure}[ht!]
    \centering
    \label{fig:TC1_plots}
	\includegraphics[width=.49\textwidth]{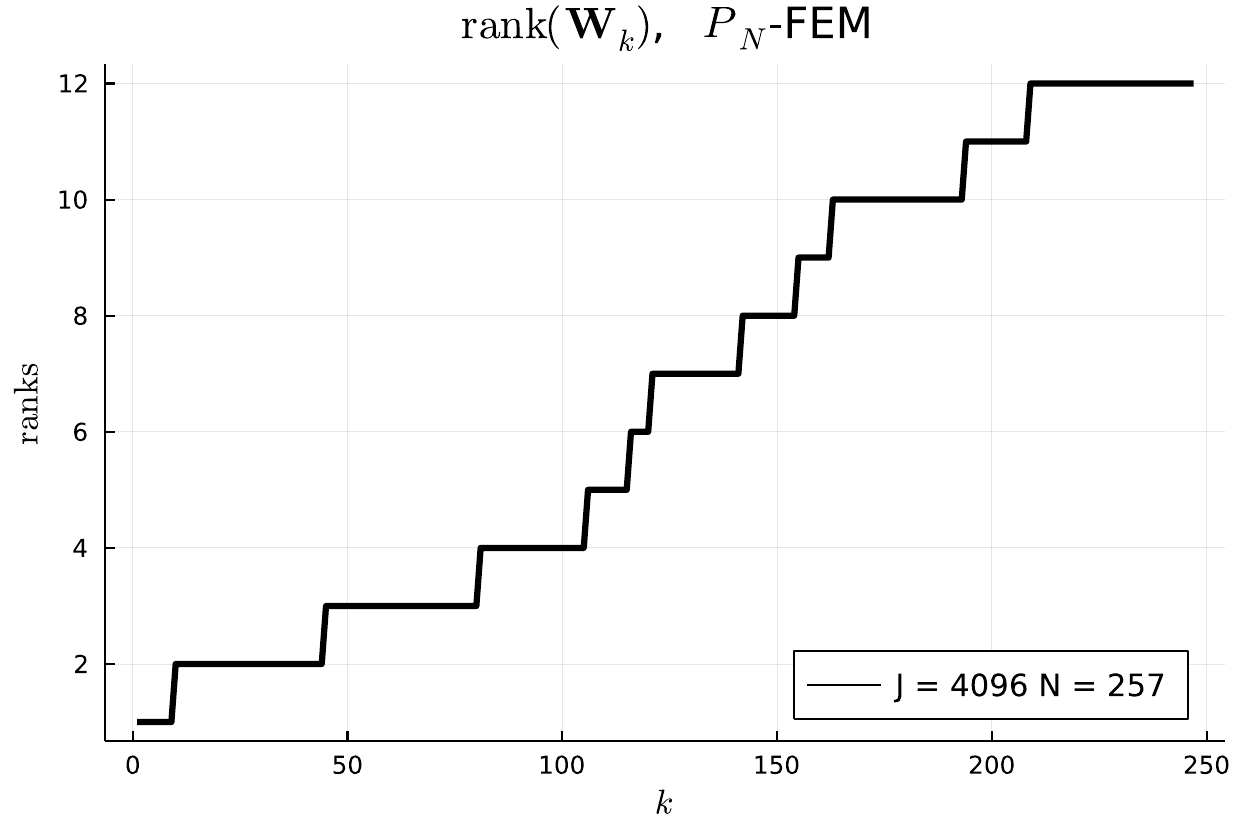}
	\includegraphics[width=.49\textwidth]{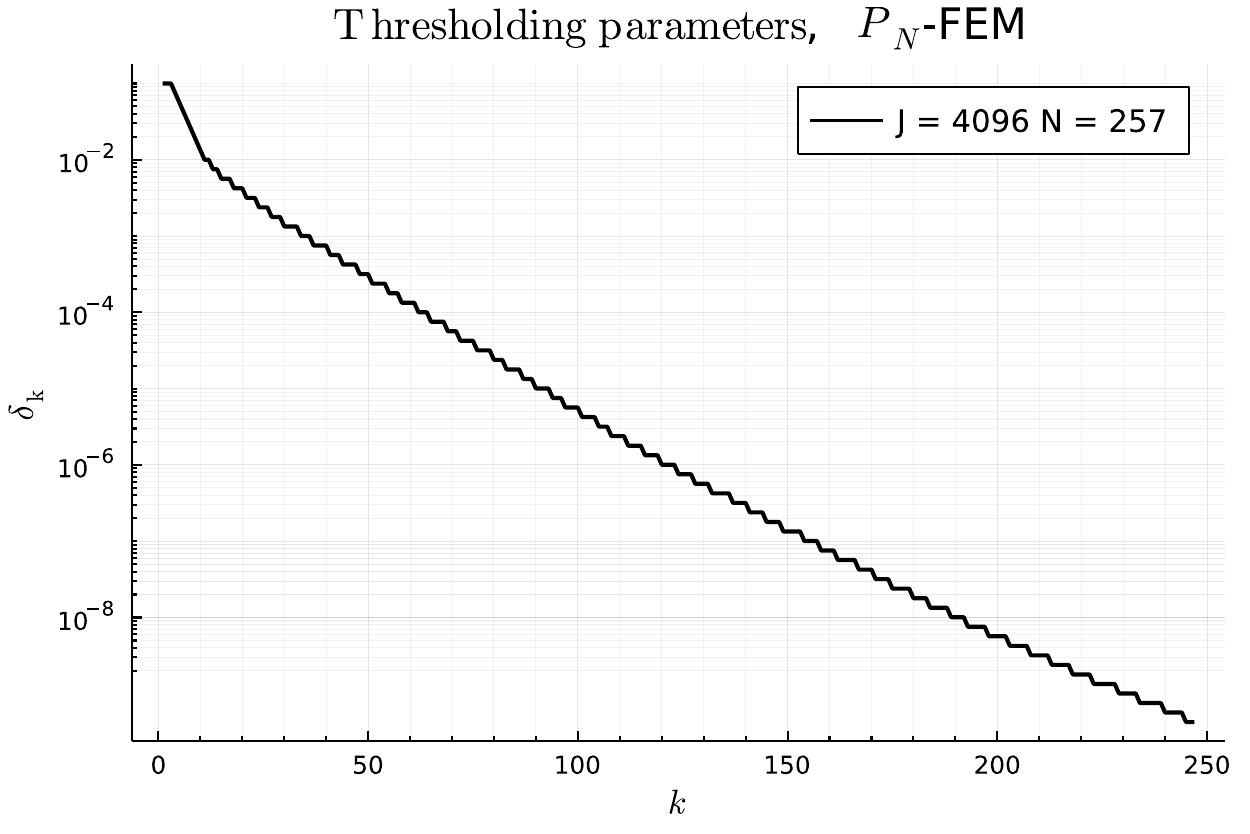}\\
	\includegraphics[width=.49\textwidth]{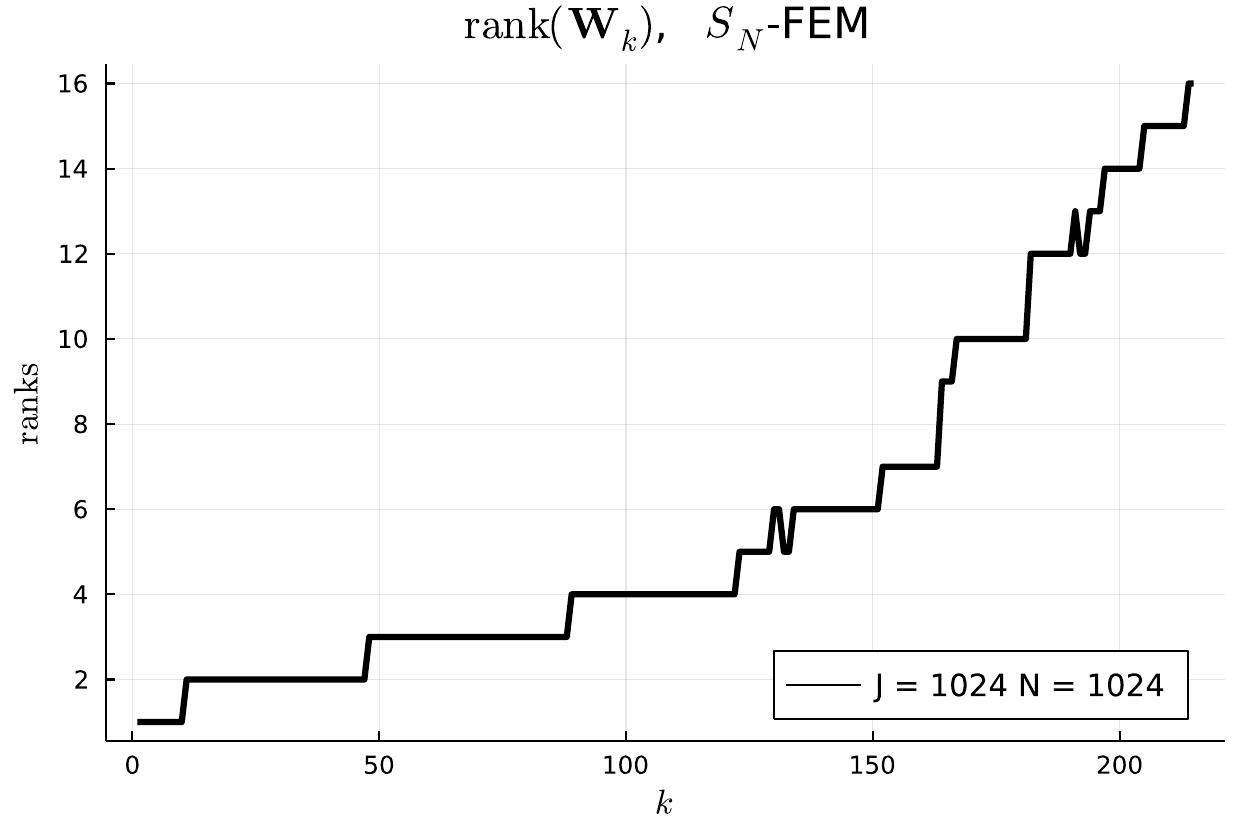}
    \includegraphics[width=.49\textwidth]{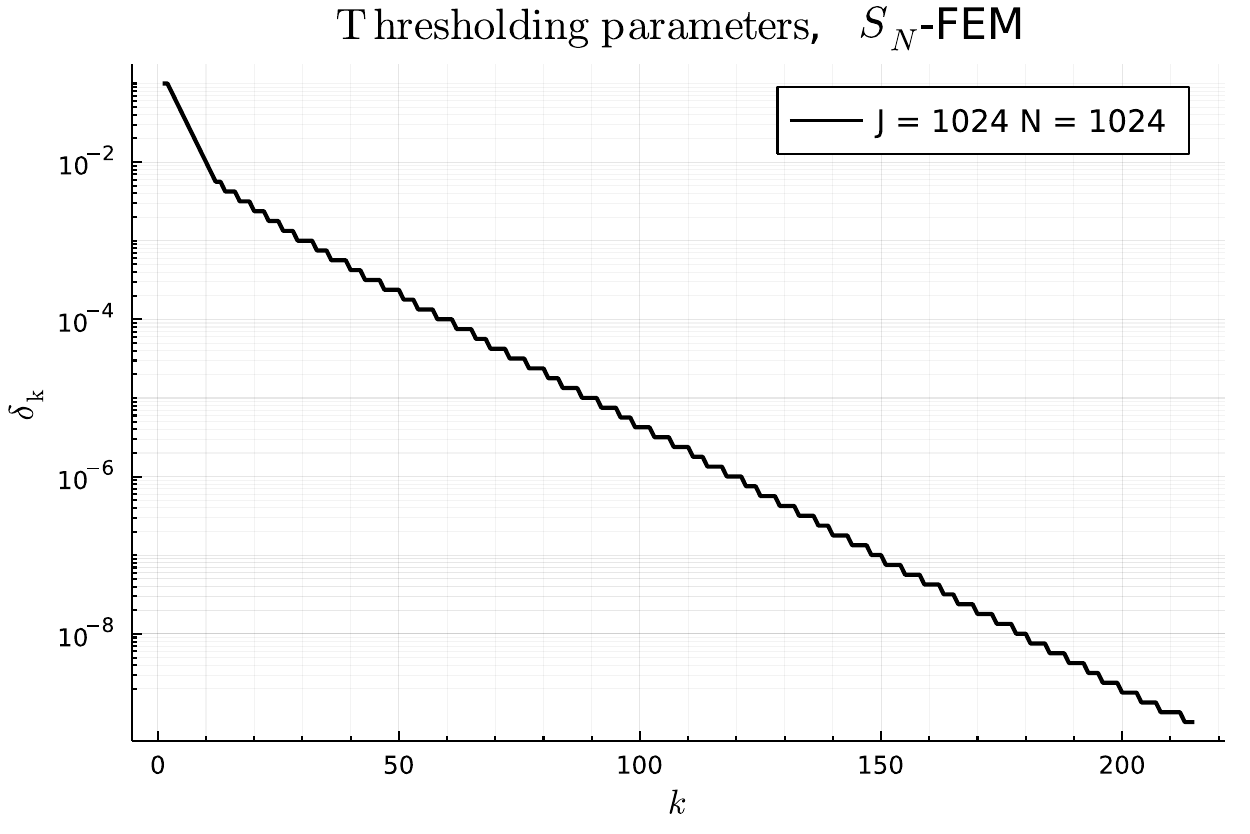}
	\caption{Behaviour of iterates ranks (left) and thresholding parmeter (right) for the $P_N$-FEM (top) and $S_N$-FEM (bottom) methods applied to test case \cref{eq:test_case_1} with Richardson tolerance $\e=10^{-7}$.}
\end{figure}

\begin{figure}[ht!]
    \centering
    \label{fig:TC1_intermediate_ranks}
	\includegraphics[width=.69\textwidth]{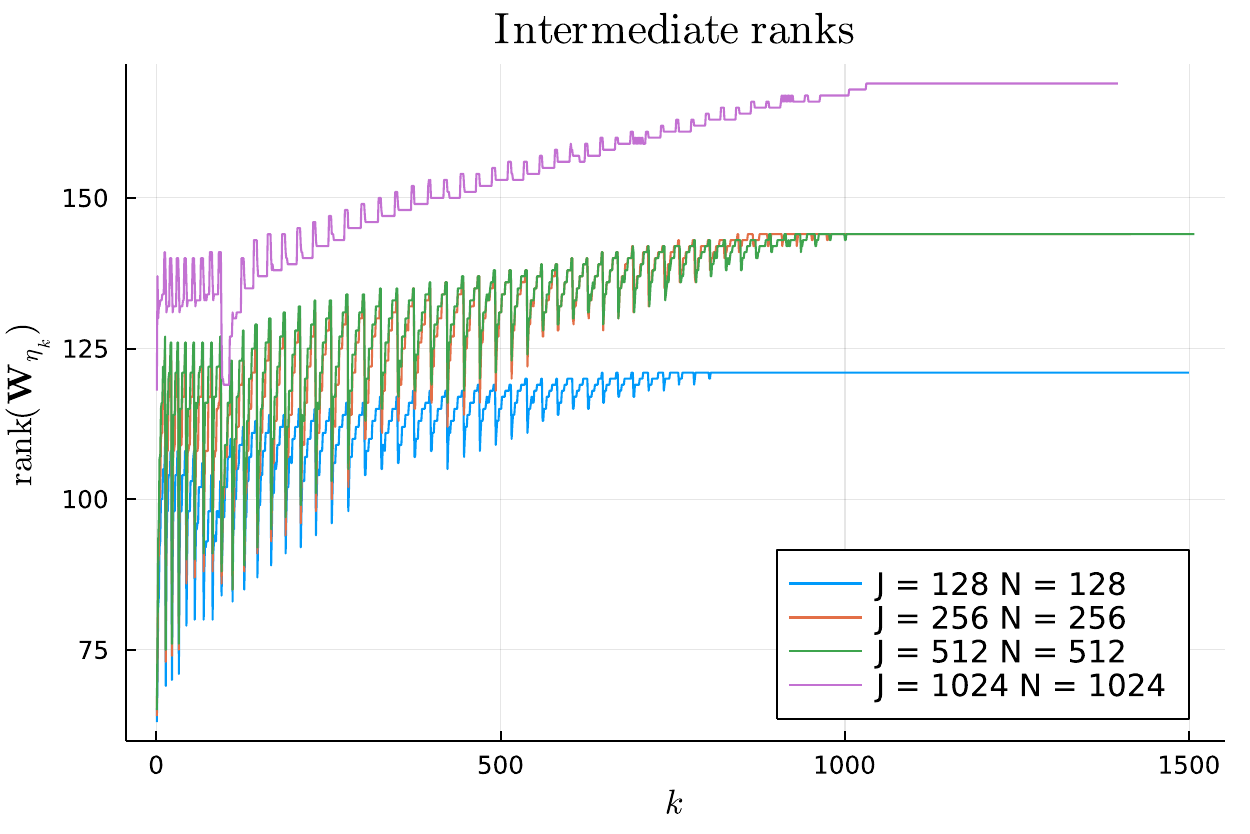}
	\caption{Ranks of the intermediate objects $\WW_{\eta_k}$ produced in lines $5$ and $12$ of \Cref{alg:inexact_residuals} for the $S_N$-FEM method, with $J$ elements in space and $N$ subdivisions in angle, applied to test case \cref{eq:test_case_1}.}
\end{figure}

\subsection{Exact solution of infinite rank}
\label{sec:Exact_solution_of_infinite_rank}
We consider now a manufactured solution with (potentially) infinite rank. The optical parameters are the same as in \Cref{sec:Rank-1_manufactured_solution}, as well as the setup of the iterative method. The exact solution is built through its $L^2(\Omega)$-SVD as follows
\begin{equation}
    \label{eq:test_case_2}
    \varphi(z, \mu) = u_2(z, \mu) = 2\sum_{k=1}^{\infty}\sigma_k\sin(k\pi z)\cos(k\pi\mu).
\end{equation}
For the experiments below, we choose $\sigma_k = k^{-3}$ for the algebraic behaviour (it can be proved that $\boldsymbol\sigma \in \ell^{2/7, \infty} \subset \ell^2$ in this case), and $\sigma_k = \exp(-k^2)$ for the exponential behaviour. 
We observe that, as in the previous subsection, the mentioned decay rates for the singular values hold for $u_2$ in the $L^2(\Omega)$-norm, but not necessarily in the graph norm. Moreover, we do not know either the behavior of the singular values of the corresponding transformation of $u_2$ under $\PPPw^{1/2}$ or the corresponding behavior for the Galerkin approximation.

In \Cref{tab:TC2_PN_FEM_algebraic,tab:TC2_PN_FEM_exponential} we report the results for the $P_N$-FEM method. We observe optimal convergence rates for the error when $J$ is increased. At the same time, the iteration count with fixed tolerance $\varepsilon=10^{-7}$ as well as the resulting ranks are robust upon mesh refinement.

\begin{table}[ht!]
    \label{tab:TC2_PN_FEM_algebraic}
    \caption{Discretization $L^2$- and $W^2_G$-errors, with convergence rates, and number of iterations ($N_{\mathrm{it}}$) of \Cref{alg:dynamical_thresholding} for the $P_N$-FEM method, with $J$ elements in space and polynomials of maximum degree $N$ in angle, applied to test case \cref{eq:test_case_2} with $\sigma_k = k^{-3}$. Last two columns: final rank of the output of the algorithm and of its back transformation through the preconditioner.}
    \centering\small\setlength\tabcolsep{0.75em}
    \begin{tabular}{c c c c c c c | c c} 
	   \toprule
	   \multicolumn{9}{c}{$P_N$-FEM method}\\
	   \cmidrule{1-9}
        $J$ & $N$ & $N_{\mathrm{it}}$ & $\norm{u_2-u_{J,N}}_{L^2(\Omega)}$ & rate & $\norm{u_2-u_{J,N}}_{W_G^2(\Omega)}$ & rate & $\mathtt{r}(\WW^{\e})$ & $\mathtt{r}(\UU^\e)$ \\
        \midrule
        128  & 27  & 435 & $1.62\cdot10^{-3}$ & 1.61 & $2.54\cdot10^{-2}$ & 1.10 & 14 & 14 \\
        256  & 41  & 447 & $5.52\cdot10^{-4}$ & 1.55 & $1.12\cdot10^{-2}$ & 1.18 & 21 & 21 \\
        512  & 65  & 444 & $2.47\cdot10^{-5}$ & 4.48 & $4.27\cdot10^{-3}$ & 1.39 & 27 & 30 \\
        1024 & 103 & 445 & $8.76\cdot10^{-7}$ & 4.82 & $2.13\cdot10^{-3}$ & 1.00 & 28 & 32 \\
        2048 & 163 & 445 & $4.87\cdot10^{-7}$ & 0.85 & $1.07\cdot10^{-3}$ & 1.00 & 30 & 34 \\
        4096 & 257 & 445 & $5.92\cdot10^{-8}$ & 3.04 & $5.33\cdot10^{-4}$ & 1.00 & 31 & 36 \\
        \bottomrule
    \end{tabular}
\end{table}

\begin{table}[ht!]
    \label{tab:TC2_PN_FEM_exponential}
    \caption{Discretization $L^2$- and $W^2_G$-errors, with convergence rates, and number of iterations ($N_{\mathrm{it}}$) of \Cref{alg:dynamical_thresholding} for the $P_N$-FEM method, with $J$ elements in space and polynomials of maximum degree $N$ in angle, applied to test case \cref{eq:test_case_2} with $\sigma_k = \exp(-k^2)$. Last two columns: final rank of the output of the algorithm and of its back transformation through the preconditioner.}
    \centering\small\setlength\tabcolsep{0.75em}
    \begin{tabular}{c c c c c c c | c c} 
	   \toprule
	   \multicolumn{9}{c}{$P_N$-FEM method, $\e = 10^{-7}$}\\
	   \cmidrule{1-9}
        $J$ & $N$ & $N_{\mathrm{it}}$ & $\norm{u_2-u_{J,N}}_{L^2(\Omega)}$ & rate & $\norm{u_2-u_{J,N}}_{W_G^2(\Omega)}$ & rate & $\mathtt{r}(\WW^{\e})$ & $\mathtt{r}(\UU^\e)$ \\
        \midrule
        128  & 27  & 337 & $1.70\cdot10^{-5}$ & 2.00 & $5.16\cdot10^{-3}$ & 1.00 & 11 & 14 \\
        256  & 41  & 335 & $4.25\cdot10^{-6}$ & 2.00 & $2.58\cdot10^{-3}$ & 1.00 & 13 & 17 \\
        512  & 65  & 335 & $1.07\cdot10^{-6}$ & 1.99 & $1.29\cdot10^{-3}$ & 1.00 & 13 & 19 \\
        1024 & 103 & 334 & $2.74\cdot10^{-7}$ & 1.97 & $6.46\cdot10^{-4}$ & 1.00 & 15 & 21 \\
        2048 & 163 & 333 & $9.42\cdot10^{-8}$ & 1.54 & $3.23\cdot10^{-4}$ & 1.00 & 16 & 23 \\
        4096 & 257 & 334 & $6.75\cdot10^{-8}$ & 0.48 & $1.61\cdot10^{-4}$ & 1.00 & 18 & 26 \\
        \bottomrule
    \end{tabular}
\end{table}

The corresponding results for the $S_N$-FEM method are reported in \Cref{tab:TC2_SN_FEM_algebraic} and \Cref{tab:TC2_SN_FEM_exponential}. Similar comments as for the $P_N$-FEM method apply.

\begin{table}[ht!]
    \label{tab:TC2_SN_FEM_algebraic}
    \caption{Discretization $L^2-$ and $W^2_G-$errors, with convergence rates, and number of iterations ($N_{\mathrm{it}}$) of \Cref{alg:dynamical_thresholding} for the $S_N$-FEM method, with $J$ elements in space and $N$ subdivisions in angle, applied to test case \cref{eq:test_case_2} with $\sigma_k = k^{-3}$. Last two columns: final rank of the output of the algorithm and of its back transformation through the preconditioner.}
    \centering\small\setlength\tabcolsep{0.75em}
    \begin{tabular}{c c c c c c c | c c} 
	   \toprule
	   \multicolumn{9}{c}{$S_N$-FEM method, $\e = 10^{-7}$}\\
	   \cmidrule{1-9}
        $J$ & $N$ & $N_{\mathrm{it}}$ & $\norm{u_2-u_{J,N}}_{L^2(\Omega)}$ & rate & $\norm{u_2-u_{J,N}}_{W_G^2(\Omega)}$ & rate & $\mathtt{r}(\WW^{\e})$ & $\mathtt{r}(\UU^\e)$ \\
        \midrule
        128  & 128  & 429 & $7.37\cdot10^{-3}$ & 1.00 & $2.41\cdot10^{-2}$ & 1.00 & 31 & 36 \\
        256  & 256  & 439 & $3.69\cdot10^{-3}$ & 1.00 & $1.21\cdot10^{-2}$ & 1.00 & 33 & 39 \\
        512  & 512  & 429 & $1.84\cdot10^{-3}$ & 1.00 & $6.03\cdot10^{-3}$ & 1.00 & 34 & 42 \\
        1024 & 1024 & 429 & $9.21\cdot10^{-4}$ & 1.00 & $3.01\cdot10^{-3}$ & 1.00 & 36 & 45 \\
        \bottomrule
    \end{tabular}
\end{table}

For the $S_N$-method, we report in \Cref{tab:TC2_SN_FEM_algebraic_adaptive_tolerance} and \Cref{tab:TC2_SN_FEM_exponential_adaptive_tolerance} the results for an iterative scheme with tolerance dependent on the mesh, i.e. $\e = 0.1/J$; see also the previous subsection for a motivation. These results show that the same optimal convergence rates can be achieved without being too greedy on the tolerance, keeping at the same time the ranks of the iterates smaller than the cases with $\e = 10^{-7}$.
Moreover, the increase in the ranks upon mesh refinement is small.

\begin{table}[ht!]
    \label{tab:TC2_SN_FEM_algebraic_adaptive_tolerance}
    \caption{Discretization $L^2-$ and $W^2_G-$errors, with convergence rates, and number of iterations ($N_{\mathrm{it}}$) of \Cref{alg:dynamical_thresholding} for the $S_N$-FEM method, with $J$ elements in space and $N$ subdivisions in angle, applied to test case \cref{eq:test_case_2} with $\sigma_k = k^{-3}$ and Richardson tolerance adaptively chosen as $0.1/J$. Last two columns: final rank of the output of the algorithm and of its back transformation through the preconditioner.}
    \centering\small\setlength\tabcolsep{0.75em}
    \begin{tabular}{c c c c c c c | c c} 
	   \toprule
	   \multicolumn{9}{c}{$S_N$-FEM method, $\e = 0.1/J$}\\
	   \cmidrule{1-9}
        $J$ & $N$ & $N_{\mathrm{it}}$ & $\norm{u_2-u_{J,N}}_{L^2(\Omega)}$ & rate & $\norm{u_2-u_{J,N}}_{W_G^2(\Omega)}$ & rate & $\mathtt{r}(\WW^{\e})$ & $\mathtt{r}(\UU^\e)$ \\
        \midrule
        128  & 128  & 208 & $7.37\cdot10^{-3}$ & 1.00 & $2.41\cdot10^{-2}$ & 1.00 & 23 & 35 \\
        256  & 256  & 226 & $3.69\cdot10^{-3}$ & 1.00 & $1.21\cdot10^{-2}$ & 1.00 & 25 & 38 \\
        512  & 512  & 242 & $1.84\cdot10^{-3}$ & 1.00 & $6.03\cdot10^{-3}$ & 1.00 & 26 & 40 \\
        1024 & 1024 & 260 & $9.21\cdot10^{-4}$ & 1.00 & $3.01\cdot10^{-3}$ & 1.00 & 27 & 43 \\
        \bottomrule
    \end{tabular}
\end{table}

\begin{table}[ht!]
    \label{tab:TC2_SN_FEM_exponential}
    \caption{Discretization $L^2-$ and $W^2_G-$errors, with convergence rates, and number of iterations ($N_{\mathrm{it}}$) of \Cref{alg:dynamical_thresholding} for the $S_N$-FEM method, with $J$ elements in space and $N$ subdivisions in angle, applied to test case \cref{eq:test_case_2} with $\sigma_k = \exp(-k^2)$. Last two columns: final rank of the output of the algorithm and of its back transformation through the preconditioner.}
    \centering\small\setlength\tabcolsep{0.75em}
    \begin{tabular}{c c c c c c c | c c} 
	   \toprule
	   \multicolumn{9}{c}{$S_N$-FEM method, $\e = 10^{-7}$}\\
	   \cmidrule{1-9}
        $J$ & $N$ & $N_{\mathrm{it}}$ & $\norm{u_2-u_{J,N}}_{L^2(\Omega)}$ & rate & $\norm{u_2-u_{J,N}}_{W_G^2(\Omega)}$ & rate & $\mathtt{r}(\WW^{\e})$ & $\mathtt{r}(\UU^\e)$ \\
        \midrule
        128  & 128  & 297 & $2.62\cdot10^{-3}$ & 1.00 & $7.30\cdot10^{-3}$ & 1.00 & 17 & 26 \\
        256  & 256  & 297 & $1.31\cdot10^{-3}$ & 1.00 & $3.65\cdot10^{-3}$ & 1.00 & 19 & 29 \\
        512  & 512  & 297 & $6.55\cdot10^{-4}$ & 1.00 & $1.83\cdot10^{-3}$ & 1.00 & 20 & 32 \\
        1024 & 1024 & 298 & $3.27\cdot10^{-4}$ & 1.00 & $9.13\cdot10^{-4}$ & 1.00 & 22 & 35 \\
        \bottomrule
    \end{tabular}
\end{table}

\begin{table}[ht!]
    \label{tab:TC2_SN_FEM_exponential_adaptive_tolerance}
    \caption{Discretization $L^2-$ and $W^2_G-$errors, with convergence rates, and number of iterations ($N_{\mathrm{it}}$) of \Cref{alg:dynamical_thresholding} for the $S_N$-FEM method, with $J$ elements in space and $N$ subdivisions in angle, applied to test case \cref{eq:test_case_2} with $\sigma_k = \exp(-k^2)$ and Richardson tolerance adaptively chosen as $0.1/J$. Last two columns: final rank of the output of the algorithm and of its back transformation through the preconditioner.}
    \centering\small\setlength\tabcolsep{0.75em}
    \begin{tabular}{c c c c c c c | c c} 
	   \toprule
	   \multicolumn{9}{c}{$S_N$-FEM method, $\e = 0.1/J$}\\
	   \cmidrule{1-9}
        $J$ & $N$ & $N_{\mathrm{it}}$ & $\norm{u_2-u_{J,N}}_{L^2(\Omega)}$ & rate & $\norm{u_2-u_{J,N}}_{W_G^2(\Omega)}$ & rate & $\mathtt{r}(\WW^{\e})$ & $\mathtt{r}(\UU^\e)$ \\
        \midrule
        128  & 128  & 127 & $2.62\cdot10^{-3}$ & 1.00 & $7.30\cdot10^{-3}$ & 1.00 & 9 & 25 \\
        256  & 256  & 138 & $1.31\cdot10^{-3}$ & 1.00 & $3.65\cdot10^{-3}$ & 1.00 & 9 & 28 \\
        512  & 512  & 150 & $6.55\cdot10^{-4}$ & 1.00 & $1.83\cdot10^{-3}$ & 1.00 & 10 & 31 \\
        1024 & 1024 & 162 & $3.27\cdot10^{-4}$ & 1.00 & $9.13\cdot10^{-4}$ & 1.00 & 12 & 33 \\
        \bottomrule
    \end{tabular}
\end{table}

\begin{figure}[ht!]
    \centering
    \label{fig:TC2_plots}
	\includegraphics[width=.49\textwidth]{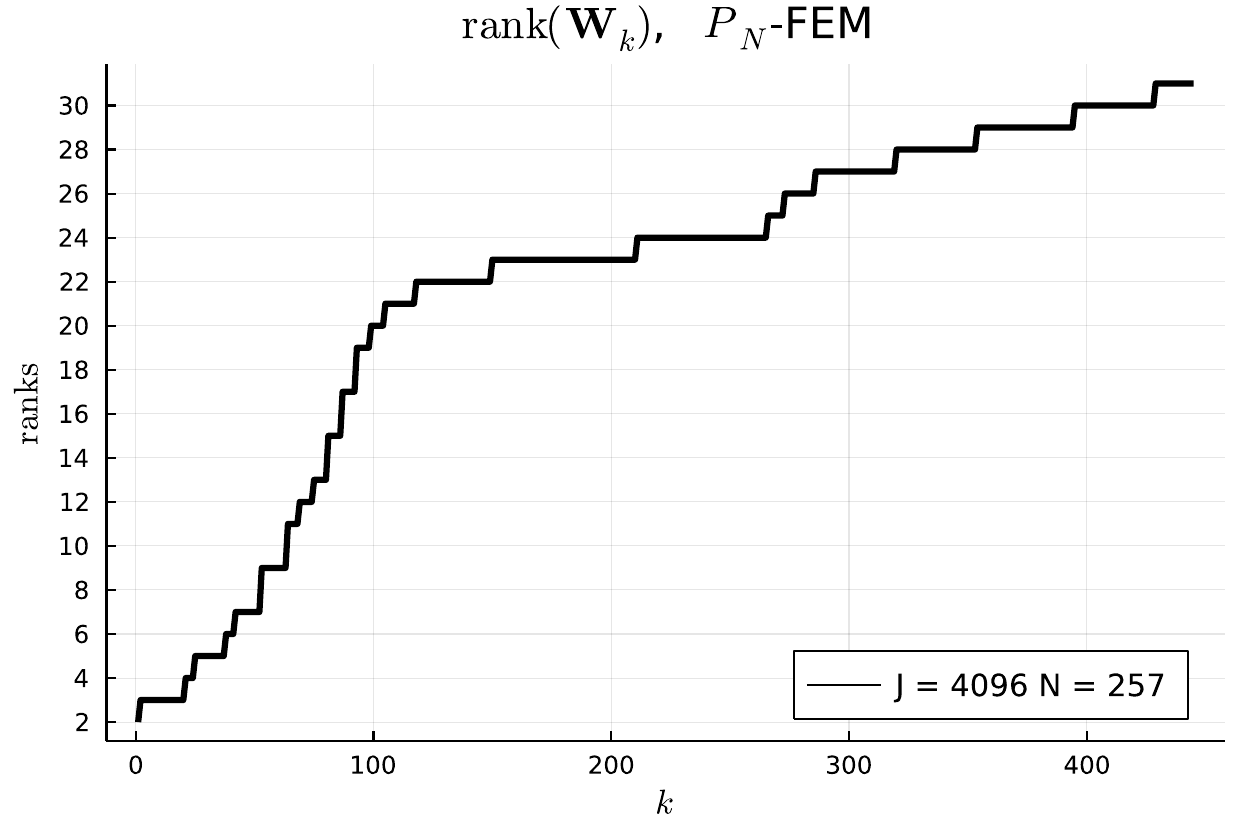}
	\includegraphics[width=.49\textwidth]{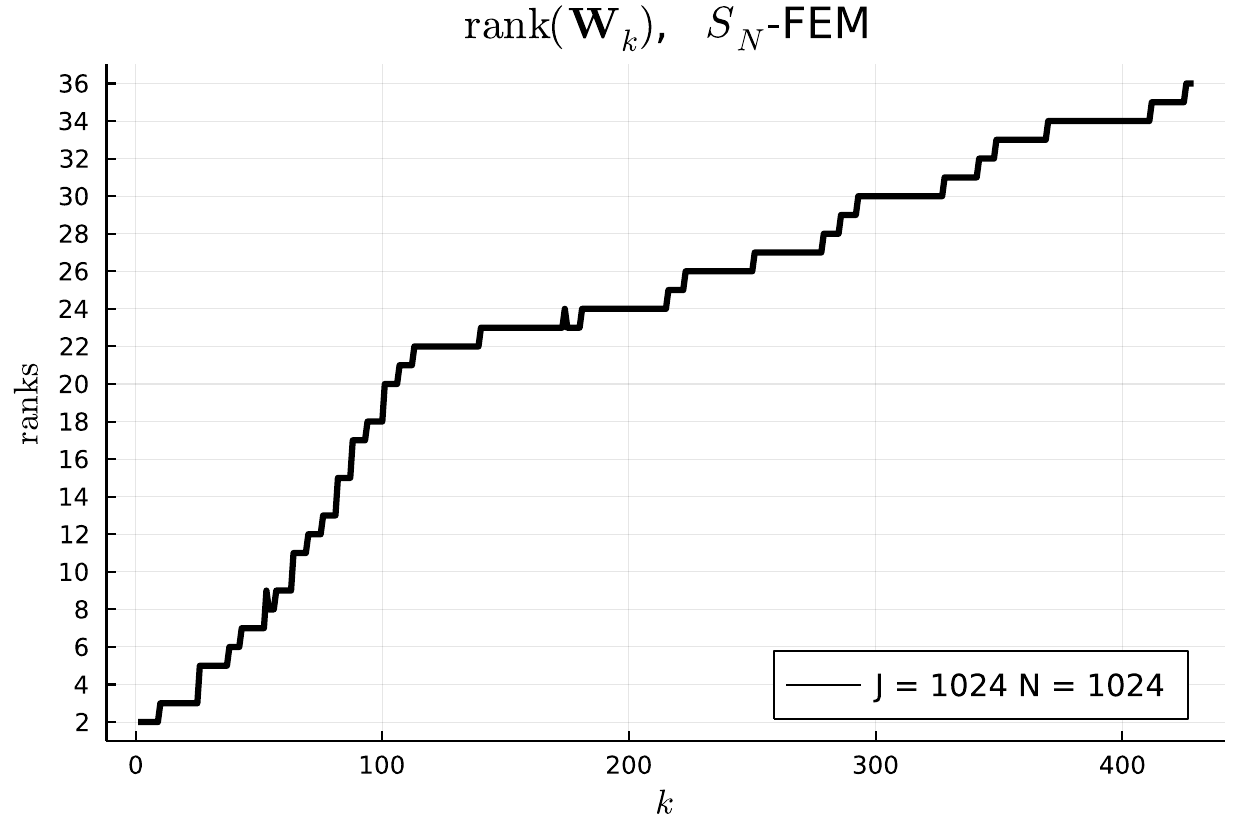}\\
	\includegraphics[width=.49\textwidth]{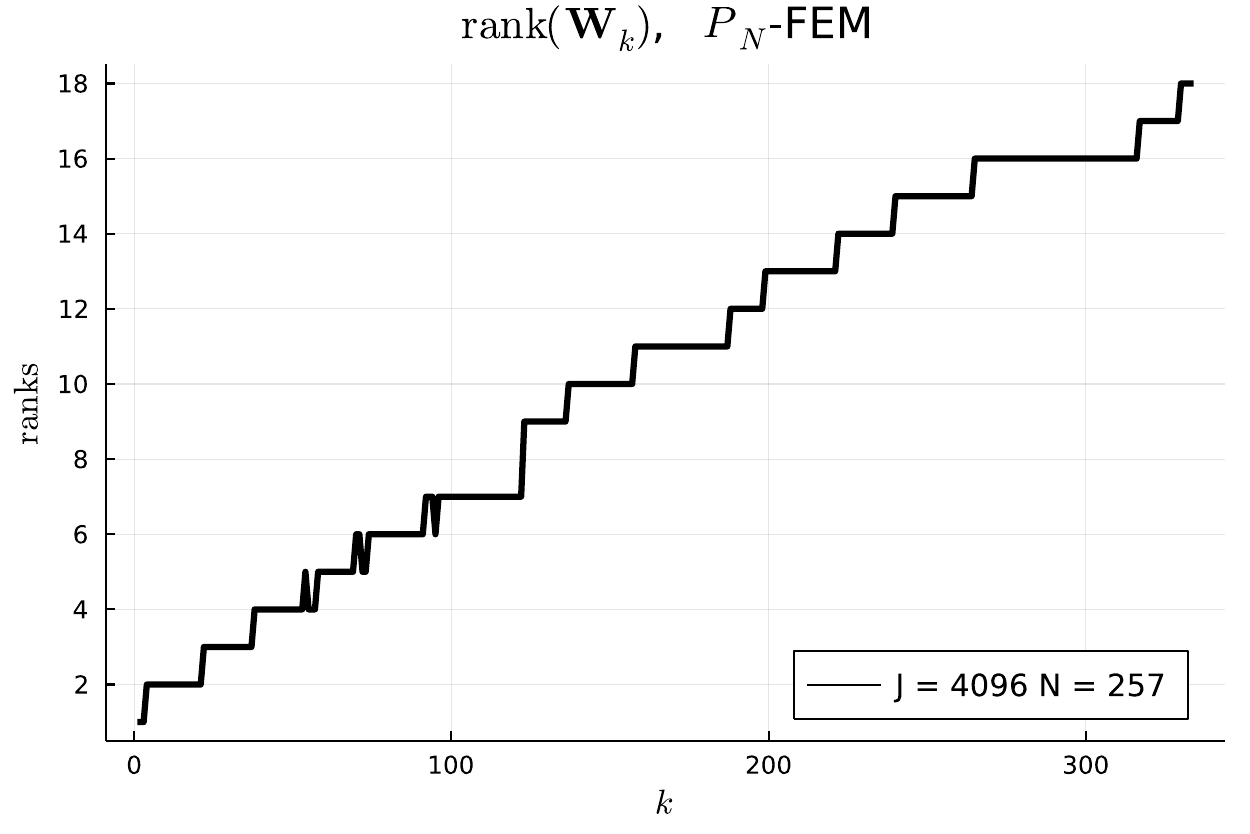}
    \includegraphics[width=.49\textwidth]{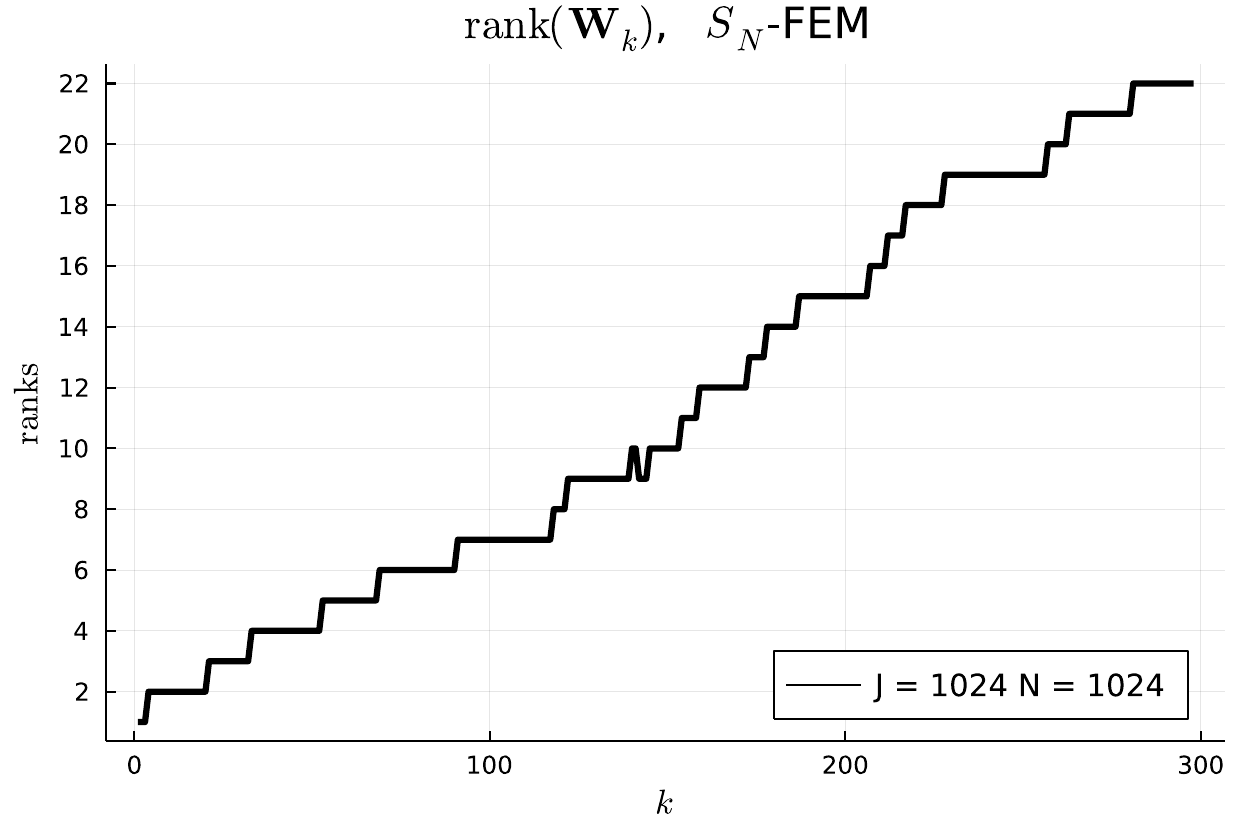}
	\caption{Behaviour of iterates ranks for the $P_N$-FEM and $S_N$-FEM methods applied to test case \cref{eq:test_case_2} with algebraic (top row) and exponential (bottom row) decay of the singular values.}
\end{figure}

\subsection{A physical problem}
\label{sec:A_physical_problem}
In this last example we test our method on a test case inspired by an imaging application. Consider an infrared beam (wavelength $\lambda_{\mathrm{ir}} = \SI{800}{\nano\meter}$) hitting the human body perpendicular to the skin surface. We assume a Gaussian-like inflow specific intensity on the left part of the slab, vanishing on the right part, i.e.,
\begin{equation*}
    g(0, \mu) = \alpha e^{-(1-\mu)^2/\beta}, \qquad g(1, \mu) = 0.
\end{equation*}
We assume there are no internal sources of radiation, $q \equiv 0$. Let us consider a slab of total length $\SI{20}{\milli\meter}$ constituted by three layers: skin, blood, and muscle. We choose the scattering and absorption parameters to be piecewise constant functions
\begin{equation}
    \label{eq:test_case_3}
    \begin{aligned}
        \sigma_s(z) & = 36.52\chi_{[0, 0.75)}(z) + 32.27\chi_{[0.75, 0.875)}(z) + 5.20\chi_{[0.875, 1]}(z) \\
        \sigma_a(z) & = 0.52\chi_{[0, 0.75)}(z) + 8.31\chi_{[0.75, 0.875)}(z) + 0.60\chi_{[0.875, 1]}(z),
    \end{aligned}
\end{equation}
which reflects realistic width for the layers ($\sim \SI{15}{\milli\meter}$ for skin, $\sim \SI{2.5}{\milli\meter}$ for blood and $\sim \SI{2.5}{\milli\meter}$ for muscle), cf., e.g., \cite{ARC05}. Given this construction and following \cite{ZL07}, realistic values for $\alpha$ and $\beta$ are $\alpha = 2.4$ and $\beta = 2500$.
These optical parameters lead to $\rho=0.9996$, when using $\epsilon=1/10$ to set up the preconditioner.

For this test case no analytic solution is available. However, due to the strong concentration of the source term $g$ around $\mu=1$, we expect a high value of $N$ is required in the $P_N$-FEM method to obtain accurate solutions.
We run our algorithm with a Richardson tolerance set to $\e = 10^{-4}$, expecting an iteration count of $\log(\varepsilon)/\log(\rho)\approx 23\,000$. As shown in \cref{tab:TC3_PN_SN_FEM} and in the top-left plots of \cref{fig:TC3_plots_PN} and \cref{fig:TC3_plots_SN}, we confirm the ability of our method to keep the ranks of the iterates small during the procedure. Decay of the residuals (bottom-left plots) allows to control the convergence of the method. The bottom-right plots show the decay of the singular values of the output $\WW^\e$ , as well as the one of its back-transformed $u_{J,N}$ through the preconditioner.  

\begin{table}[ht!]
    \label{tab:TC3_PN_SN_FEM}
    \caption{Number of iterations ($N_{\mathrm{it}}$) of \Cref{alg:dynamical_thresholding} for the $P_N$-FEM (left) and $S_N$-FEM (right) methods (with $J$ and $N$ discretization parameters according to \Cref{sec:PN-FEM_method} and \Cref{sec:SN-FEM_method}, respectively) applied to test case \cref{eq:test_case_3}, and final rank of the output of the algorithm and of its back transformation through the preconditioner.}
    \centering\small\setlength\tabcolsep{0.75em}
    \begin{tabular}{c c c c c} 
	   \toprule
	   \multicolumn{5}{c}{$P_N$-FEM method}\\
	   \cmidrule{1-5}
        $J$ & $N$ & $N_{\mathrm{it}}$ & $\mathtt{r}(\WW^{\e})$ & $\mathtt{r}(\UU^\e)$ \\
        \midrule
        128  & 27  & 15154 & 12 & 14 \\
        256  & 41  & 15553 & 13 & 21 \\
        512  & 65  & 15591 & 14 & 26 \\
        1024 & 103 & 15698 & 15 & 29 \\
        2048 & 163 & 15805 & 17 & 31 \\ 
        4096 & 257 & 15792 & 18 & 35 \\ 
        \bottomrule
    \end{tabular}
    \quad
    \begin{tabular}{c c c c c} 
	   \toprule
	   \multicolumn{5}{c}{$S_N$-FEM method}\\
	   \cmidrule{1-5}
        $J$ & $N$ & $N_{\mathrm{it}}$ & $\mathtt{r}(\WW^{\e})$ & $\mathtt{r}(\UU^\e)$ \\
        \midrule
        128  & 128  & 13857 & 15 & 33 \\
        256  & 256  & 14041 & 17 & 37 \\
        512  & 512  & 14147 & 18 & 40 \\
        1024 & 1024 & 14236 & 19 & 44 \\
        %
        \bottomrule
    \end{tabular}
\end{table}

\setlength{\textfloatsep}{5pt plus 1.0pt minus 2.0pt}

\begin{figure}[ht!]
    \centering
    \label{fig:TC3_plots_PN}
	\includegraphics[width=.45\textwidth]{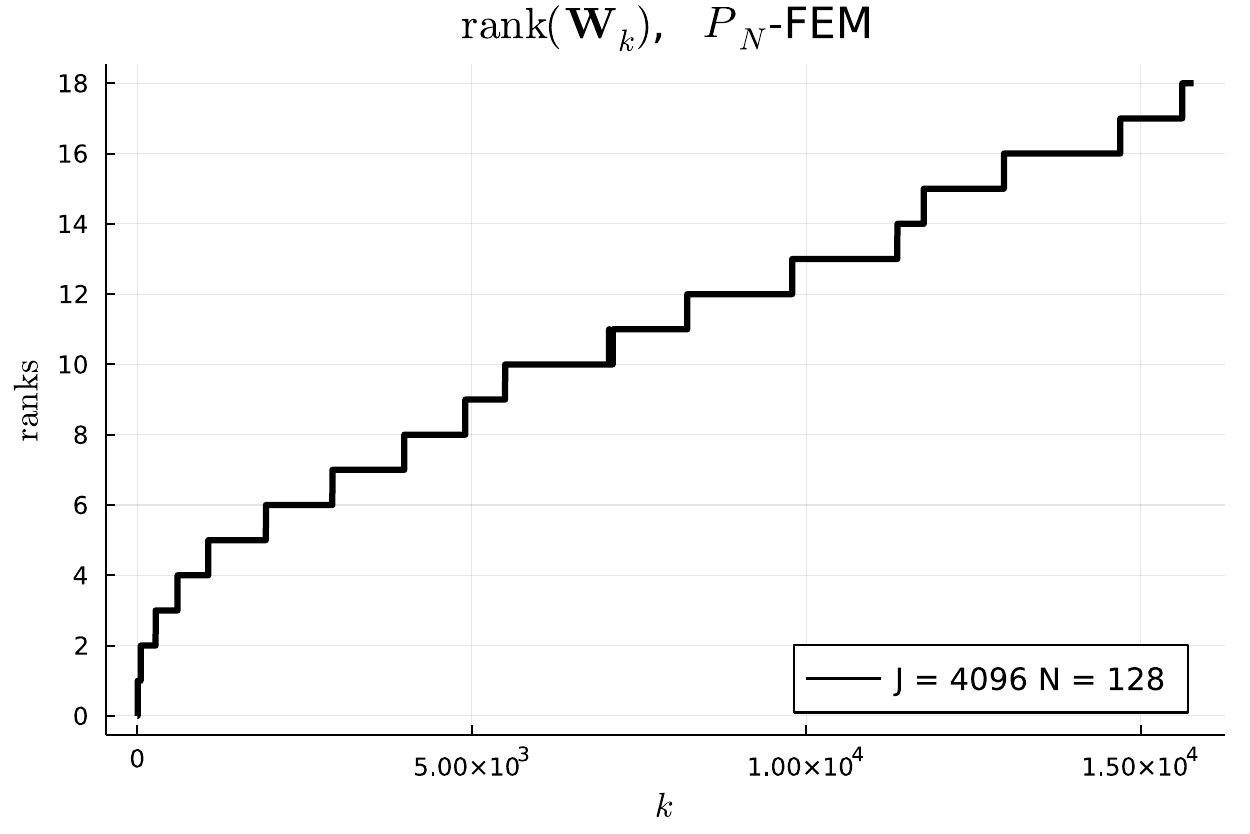}
	\includegraphics[width=.45\textwidth]{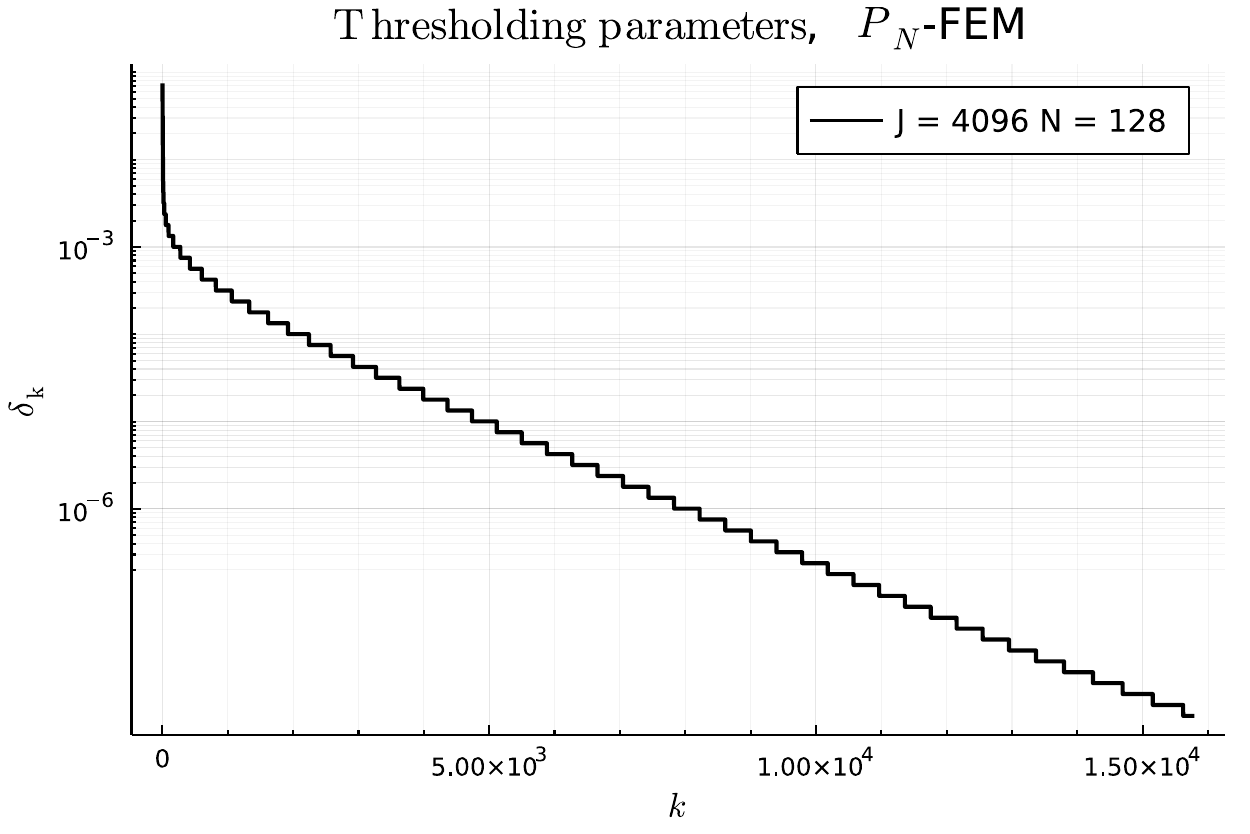}\\
	\includegraphics[width=.45\textwidth]{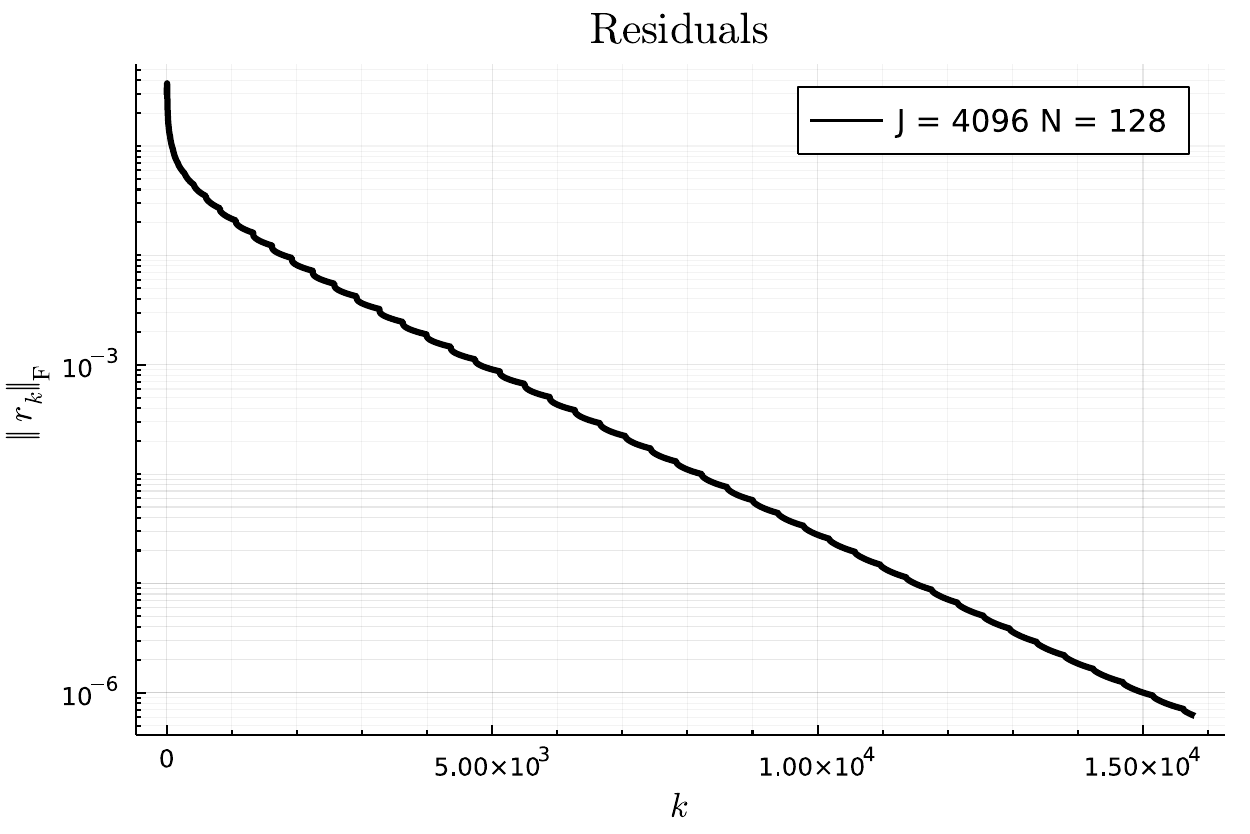}
    \includegraphics[width=.45\textwidth]{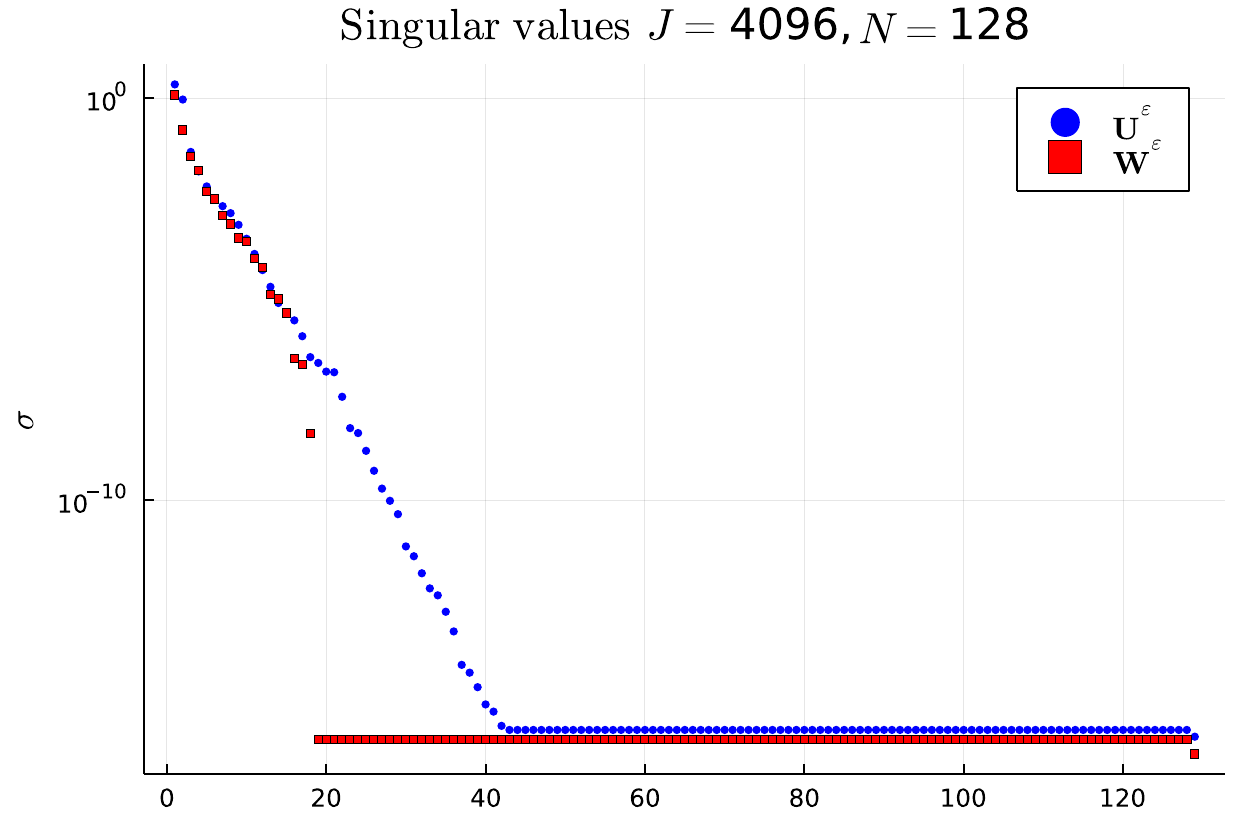}
	\caption{Behavior of iterates ranks (top left), thresholding parameter (top right), Frobenius norm of the residuals (bottom left) and singular values of both $\WW^{\e}$ and $\UU^\e$ (bottom right), for the $P_N$-FEM method applied to test case \cref{eq:test_case_3}.}
\end{figure}

\begin{figure}[ht!]
    \centering
    \label{fig:TC3_plots_SN}
	\includegraphics[width=.45\textwidth]{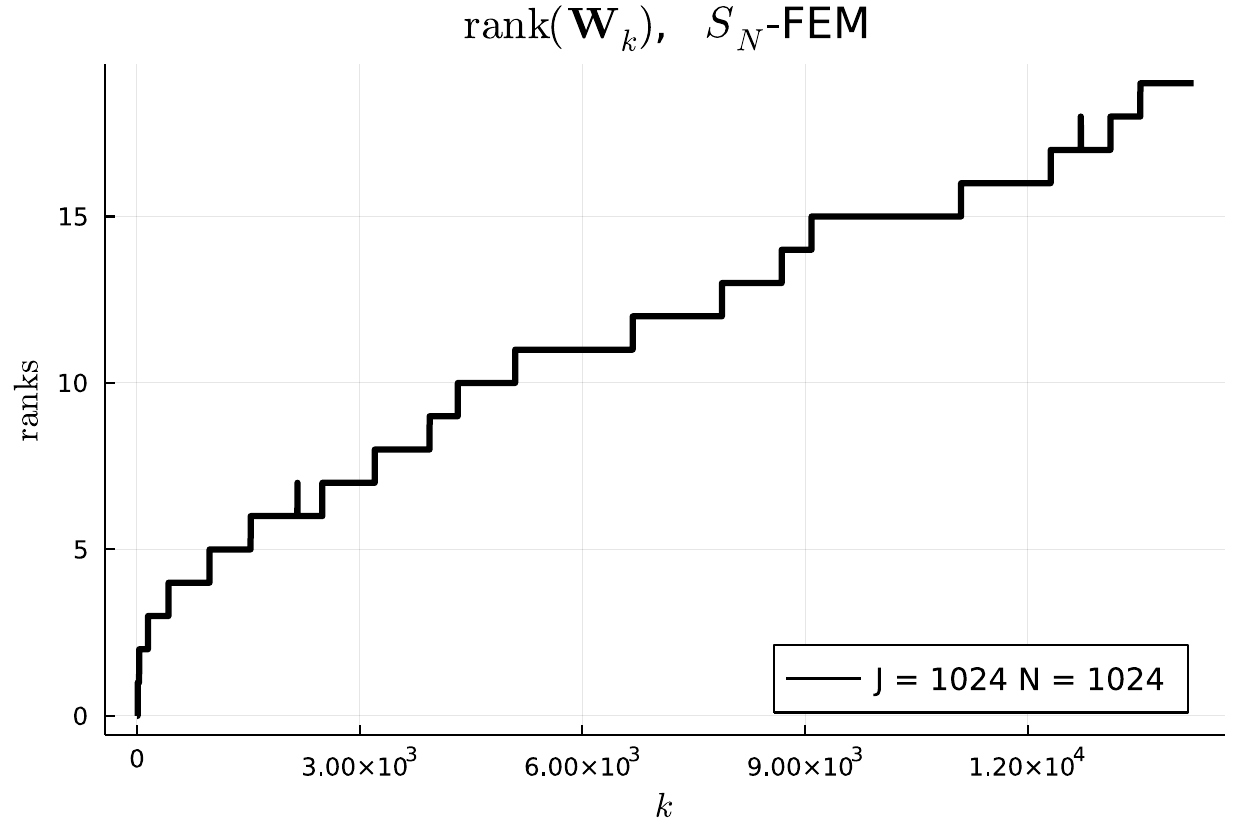}
	\includegraphics[width=.45\textwidth]{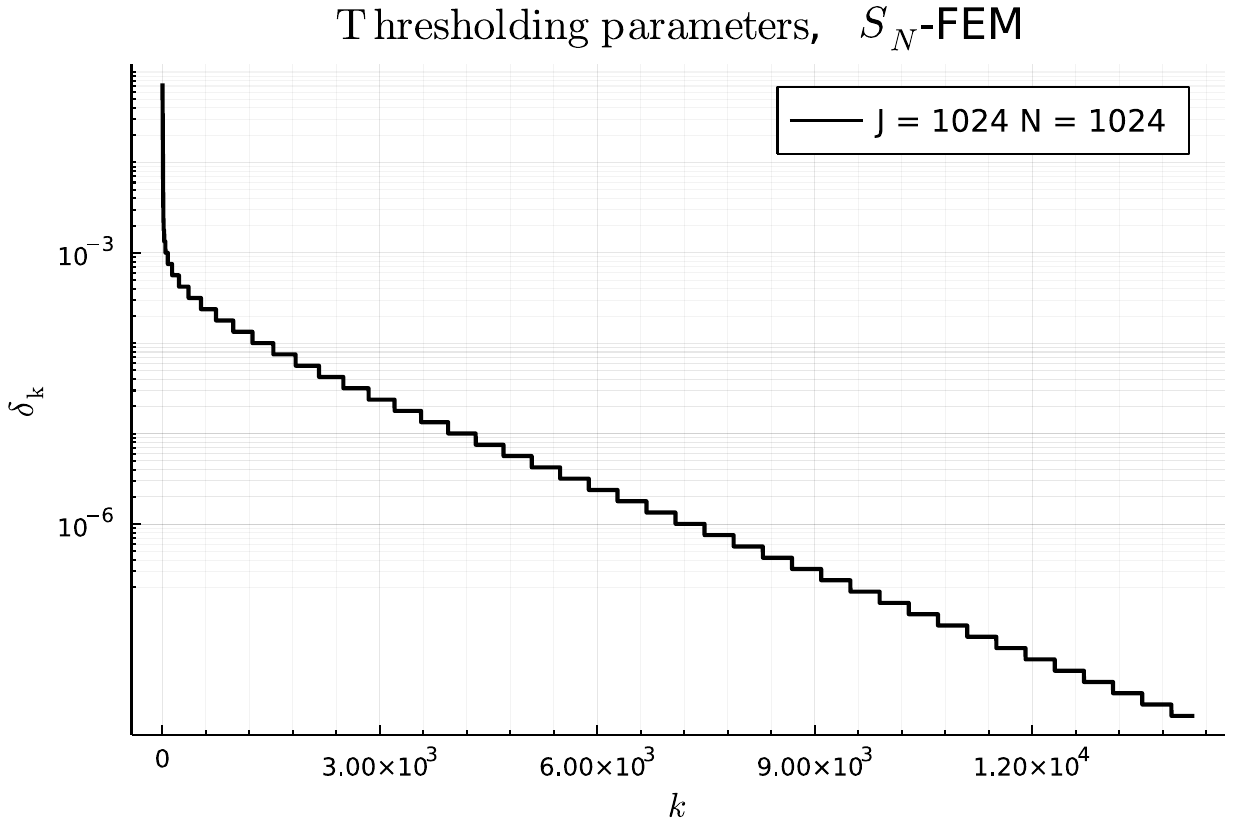}\\
	\includegraphics[width=.45\textwidth]{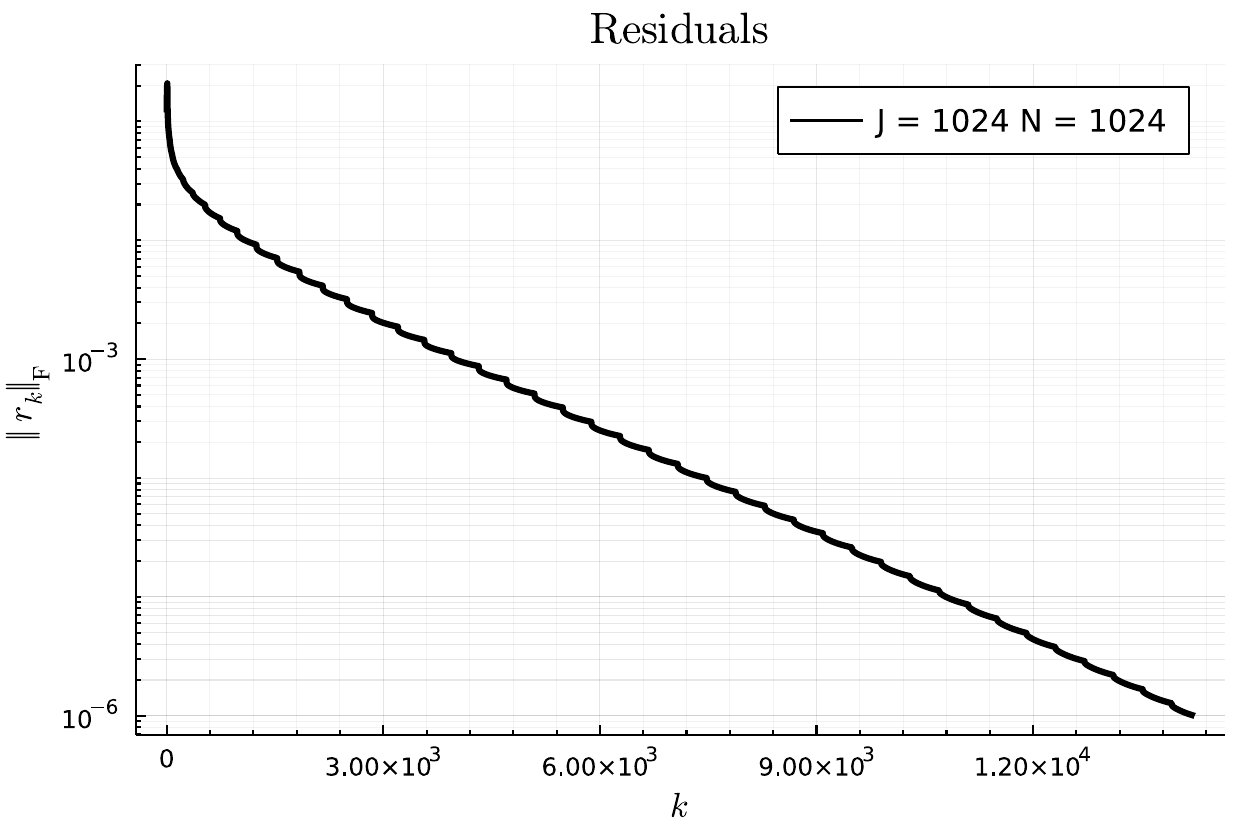}
    \includegraphics[width=.45\textwidth]{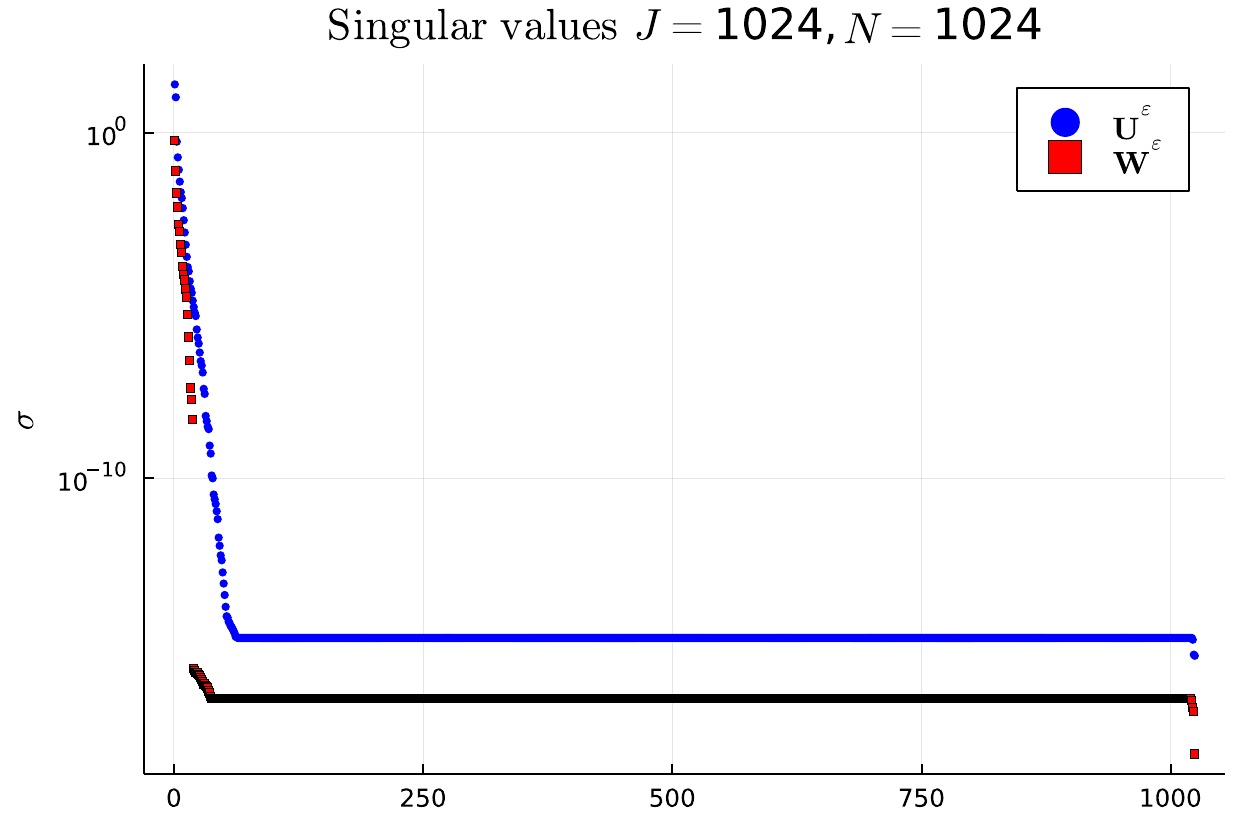}
	\caption{Behavior of iterates ranks (top left), thresholding parameter (top right), Frobenius norm of the residuals (bottom left) and singular values of both $\WW^{\e}$ and $\UU^\e$ (bottom right), for the $S_N$-FEM method applied to test case \cref{eq:test_case_3}.}
\end{figure}

\section{Conclusions and possible extensions}
\label{sec:conclusion}
We have developed a rigorous low-rank framework for the even-parity formulation of the stationary monochromatic radiative transfer equation in slab geometry, which can be applied to many discretization schemes as exemplified by considering widely used $P_N$ and $S_N$ schemes. Our framework employs iterative computations that efficiently maintain low ranks for the iterates.
To keep the total number of iterations low, we have devised a preconditioner in Kronecker-sum format that is based on exponential sum approximations. 
Moreover, the preconditioner allows control over errors in energy norm by performing computations in corresponding Euclidean spaces. For the exemplary considered discretizations, we made all hyperparameters, such as the choice of the ranks for the preconditioner, explicit.
We showed that inexact but accurate application of the preconditioned linear operator allows us to further reduce intermediate ranks, at the expense of somewhat increased iteration counts. 
To overcome the slow convergence for diffusion dominated problems like the test case studied in \Cref{sec:A_physical_problem}, one could investigate low-rank updates computed in a manner that is similar to more established acceleration schemes, e.g., as in \cite{AL02,PS20,DPS22}.

\new{It is tempting to accelerate the iterative scheme by replacing the investigated preconditioned Richardson iteration by other preconditioned Krylov space methods in low-rank format \cite{KT11,BG13,D13}. However, orthogonality properties, which are essential for fast Krylov space methods such as the conjugate gradient method, are usually not preserved under rank truncation; hence, such methods are not guaranteed to perform better in terms of iteration count \cite{AU20}. Moreover, it is more difficult to control the ranks of iterates than for simpler iterative schemes, and avoiding sharp rank growth especially in the initial steps of Krylov space methods can be challenging; see, e.g., \cite[Ch.~4]{T12} and \cite{KPT14}. We leave a more detailed investigation of these trade-offs for future work.}

\section*{Acknowledgements}
R.B. and M.S. acknowledge support by the Dutch Research Council (NWO) via grant OCENW.KLEIN.183.

\newpage
\bibliographystyle{siamplain}
\bibliography{references_FINAL}

\begin{thebibliography}{10}

\bibitem{AL02}
{\sc M.~L. Adams and E.~W. Larsen}, {\em Fast iterative methods for
  discrete-ordinates particle transport calculations}, Progress in Nuclear
  Energy, 40(1) (2002), pp.~3--159,
  \url{https://doi.org/https://doi.org/10.1016/S0149-1970(01)00023-3}.

\bibitem{A98}
{\sc V.~Agoshkov}, {\em Boundary value problems for transport equations},
  Model. Simul. Sci. Eng. Technol., Birkh\"{a}user Boston, Inc., Boston, MA,
  1998, \url{https://doi.org/10.1007/978-1-4612-1994-1}.

\bibitem{AGH16}
{\sc B.~Ahmedov, M.~A. Grepl, and M.~Herty}, {\em Certified reduced-order
  methods for optimal treatment planning}, Math. Models Methods Appl. Sci., 26
  (2016), pp.~699--727, \url{https://doi.org/10.1142/S0218202516500159}.

\bibitem{ARC05}
{\sc G.~Alexandrakis, F.~R. Rannou, and C.~F. Arion}, {\em Tomographic
  bioluminescence imaging by use of a combined optical-pet (opet) system: a
  computer simulation feasibility study}, Phys. Med. Biol., 51 (2005),
  pp.~391--409, \url{https://doi.org/10.1080/10407790600964583}.

\bibitem{AU20}
{\sc M.~Ali and K.~Urban}, {\em H{T}-{AWGM}: a hierarchical {T}ucker-adaptive
  wavelet {G}alerkin method for high-dimensional elliptic problems}, Adv.
  Comput. Math., 46 (2020), pp.~Paper No. 59, 34,
  \url{https://doi.org/10.1007/s10444-020-09797-9}.

\bibitem{AS09}
{\sc S.~R. Arridge and J.~C. Schotland}, {\em Optical tomography: forward and
  inverse problems}, Inverse Problems, 25 (2009), pp.~123010, 59,
  \url{https://doi.org/10.1088/0266-5611/25/12/123010}.

\bibitem{B23}
{\sc M.~Bachmayr}, {\em Low-rank tensor methods for partial differential
  equations}, Acta Numer., 32 (2023), pp.~1--121,
  \url{https://doi.org/10.1017/S0962492922000125}.

\bibitem{BD16}
{\sc M.~Bachmayr and W.~Dahmen}, {\em Adaptive low-rank methods: problems on
  {S}obolev spaces}, SIAM J. Numer. Anal., 54 (2016), pp.~744--796,
  \url{https://doi.org/10.1137/140978223}.

\bibitem{BS17}
{\sc M.~Bachmayr and R.~Schneider}, {\em Iterative methods based on soft
  thresholding of hierarchical tensors}, Found. Comput. Math., 17 (2017),
  pp.~1037--1083, \url{https://doi.org/10.1007/s10208-016-9314-z}.

\bibitem{BG13}
{\sc J.~Ballani and L.~Grasedyck}, {\em A projection method to solve linear
  systems in tensor format}, Numer. Linear Algebra Appl., 20 (2013),
  pp.~27--43, \url{https://doi.org/10.1002/nla.1818},
  \url{https://doi.org/10.1002/nla.1818}.

\bibitem{BBPS23}
{\sc R.~Bardin, F.~Bertrand, O.~Palii, and M.~Schlottbom}, {\em A phase-space
  discontinuous galerkin approximation for the radiative transfer equation in
  slab geometry}, Computational Methods in Applied Mathematics,  (2024),
  \url{https://doi.org/10.1515/cmam-2023-0090}.

\bibitem{BM10}
{\sc G.~Beylkin and L.~Monz\'{o}n}, {\em Approximation by exponential sums
  revisited}, Appl. Comput. Harmon. Anal., 28 (2010), pp.~131--149,
  \url{https://doi.org/10.1016/j.acha.2009.08.011}.

\bibitem{CZ67}
{\sc K.~M. Case and P.~F. Zweifel}, {\em Linear transport theory},
  Addison-Wesley Publishing Co., Reading, Mass.-London-Don Mills, Ont., 1967.

\bibitem{C60}
{\sc S.~Chandrasekhar}, {\em Radiative transfer}, Dover Publications, Inc., New
  York, 1960.

\bibitem{DGM18}
{\sc W.~Dahmen, F.~Gruber, and O.~Mula}, {\em An adaptive nested source term
  iteration for radiative transfer equations}, Math. Comp., 89 (2020),
  pp.~1605--1646, \url{https://doi.org/10.1090/mcom/3505}.

\bibitem{DV98}
{\sc R.~A. DeVore}, {\em Nonlinear approximation}, Acta Numer., 7 (1998),
  pp.~51--150, \url{https://doi.org/10.1017/S0962492900002816}.

\bibitem{D13}
{\sc S.~V. Dolgov}, {\em T{T}-{GMRES}: solution to a linear system in the
  structured tensor format}, Russian J. Numer. Anal. Math. Modelling, 28
  (2013), pp.~149--172, \url{https://doi.org/10.1515/rnam-2013-0009}.

\bibitem{DES21}
{\sc J.~D{\"o}lz, H.~Egger, and M.~Schlottbom}, {\em A model reduction approach
  for inverse problems with operator valued data}, Numerische Mathematik, 148
  (2021), pp.~889--917, \url{https://doi.org/10.1007/s00211-021-01224-5},
  \url{http://dx.doi.org/10.1007/s00211-021-01224-5}.

\bibitem{DPS22}
{\sc J.~D\"olz, O.~Palii, and M.~Schlottbom}, {\em On robustly convergent and
  efficient iterative methods for anisotropic radiative transfer}, J. Sci.
  Comput., 90 (2022), pp.~Paper No. 94, 28,
  \url{https://doi.org/10.1007/s10915-021-01757-9},
  \url{https://doi.org/10.1007/s10915-021-01757-9}.

\bibitem{DM79}
{\sc J.~J. Duderstadt and W.~R. Martin}, {\em Transport theory}, A
  Wiley-Interscience Publication, John Wiley \& Sons, New
  York-Chichester-Brisbane, 1979.

\bibitem{ES12}
{\sc H.~Egger and M.~Schlottbom}, {\em A mixed variational framework for the
  radiative transfer equation}, Math. Models Methods Appl. Sci., 22 (2012),
  pp.~1150014, 30, \url{https://doi.org/10.1142/S021820251150014X}.

\bibitem{ES19}
{\sc H.~Egger and M.~Schlottbom}, {\em A perfectly matched layer approach for
  {$P_N$}-approximations in radiative transfer}, SIAM J. Numer. Anal., 57
  (2019), pp.~2166--2188, \url{https://doi.org/10.1137/18M1172521},
  \url{https://doi.org/10.1137/18M1172521}.

\bibitem{EKKMQ24}
{\sc L.~Einkemmer, K.~Kormann, J.~Kusch, R.~G. McClarren, and J.-M. Qiu}, {\em
  A review of low-rank methods for time-dependent kinetic simulations},
  arxi:2412.0591,  (2024).

\bibitem{Khaza2024}
{\sc H.~{El Kahza}, W.~Taitano, J.-M. Qiu, and L.~Chac{\'o}n}, {\em
  Krylov-based adaptive-rank implicit time integrators for stiff problems with
  application to nonlinear fokker-planck kinetic models}, Journal of
  Computational Physics, 518 (2024), p.~113332,
  \url{https://doi.org/https://doi.org/10.1016/j.jcp.2024.113332},
  \url{https://www.sciencedirect.com/science/article/pii/S0021999124005801}.

\bibitem{E98}
{\sc K.~F. Evans}, {\em The spherical harmonics discrete ordinate method for
  three-dimensional atmospheric radiative transfer}, J. Atmospheric Sci., 55
  (1998), pp.~429--446,
  \url{https://doi.org/10.1175/1520-0469(1998)055<0429:TSHDOM>2.0.CO;2}.

\bibitem{GHS07}
{\sc T.~Gantumur, H.~Harbrecht, and R.~Stevenson}, {\em An optimal adaptive
  wavelet method without coarsening of the iterands}, Math. Comp., 76 (2007),
  pp.~615--629, \url{https://doi.org/10.1090/S0025-5718-06-01917-X}.

\bibitem{GG08}
{\sc W.~Gautschi and C.~Giordano}, {\em Luigi {G}atteschi's work on asymptotics
  of special functions and their zeros}, Numer. Algorithms, 49 (2008),
  pp.~11--31, \url{https://doi.org/10.1007/s11075-008-9208-5}.

\bibitem{GS11b}
{\sc K.~Grella and C.~Schwab}, {\em Sparse discrete ordinates method in
  radiative transfer}, Comput. Methods Appl. Math., 11 (2011), pp.~305--326,
  \url{https://doi.org/10.2478/cmam-2011-0017}.

\bibitem{GS11a}
{\sc K.~Grella and C.~Schwab}, {\em Sparse tensor spherical harmonics
  approximation in radiative transfer}, J. Comput. Phys., 230 (2011),
  pp.~8452--8473, \url{https://doi.org/10.1016/j.jcp.2011.07.028}.

\bibitem{GuoEmaQiu2024}
{\sc W.~Guo, J.~F. Ema, and J.-M. Qiu}, {\em A local macroscopic conservative
  (lomac) low rank tensor method with the discontinuous galerkin method for the
  vlasov dynamics}, Communications on Applied Mathematics and Computation, 6
  (2024), pp.~550--575, \url{https://doi.org/10.1007/s42967-023-00277-7}.

\bibitem{GuoQiu2022}
{\sc W.~Guo and J.-M. Qiu}, {\em A low rank tensor representation of linear
  transport and nonlinear vlasov solutions and their associated flow maps},
  Journal of Computational Physics, 458 (2022), p.~111089,
  \url{https://doi.org/https://doi.org/10.1016/j.jcp.2022.111089}.

\bibitem{GuoQiu2024}
{\sc W.~Guo and J.-M. Qiu}, {\em A conservative low rank tensor method for the
  vlasov dynamics}, SIAM Journal on Scientific Computing, 46 (2024),
  pp.~A232--A263, \url{https://doi.org/10.1137/22m1473960}.

\bibitem{KochLubich2007}
{\sc O.~Koch and C.~Lubich}, {\em Dynamical low‐rank approximation}, SIAM
  Journal on Matrix Analysis and Applications, 29 (2007), pp.~434--454,
  \url{https://doi.org/10.1137/050639703}.

\bibitem{KL15}
{\sc J.~K\'{o}ph\'{a}zi and D.~Lathouwers}, {\em A space-angle {DGFEM} approach
  for the {B}oltzmann radiation transport equation with local angular
  refinement}, J. Comput. Phys., 297 (2015), pp.~637--668,
  \url{https://doi.org/10.1016/j.jcp.2015.05.031}.

\bibitem{KPT14}
{\sc D.~Kressner, M.~Plešinger, and C.~Tobler}, {\em A preconditioned low-rank
  {CG} method for parameter-dependent {L}yapunov matrix equations}, Numer.
  Linear Algebra Appl., 21 (2014), pp.~666--684,
  \url{https://doi.org/10.1002/nla.1919},
  \url{https://doi.org/10.1002/nla.1919}.

\bibitem{KT11}
{\sc D.~Kressner and C.~Tobler}, {\em Low-rank tensor {K}rylov subspace methods
  for parametrized linear systems}, SIAM J. Matrix Anal. Appl., 32 (2011),
  pp.~1288--1316, \url{https://doi.org/10.1137/100799010},
  \url{https://doi.org/10.1137/100799010}.

\bibitem{KS23}
{\sc J.~Kusch and P.~Stammer}, {\em A robust collision source method for rank
  adaptive dynamical low-rank approximation in radiation therapy}, ESAIM Math.
  Model. Numer. Anal., 57 (2023), pp.~865--891,
  \url{https://doi.org/10.1051/m2an/2022090}.

\bibitem{LT03}
{\sc S.~Larrson and V.~Thom{\'e}e}, {\em Partial Differential Equations with
  Numerical Methods}, Texts in Applied Mathematics, Springer Berlin, 2003.

\bibitem{L97}
{\sc E.~W. Larsen}, {\em The nature of transport calculations used in radiation
  oncology}, Transport Theor. Stat., 26 (1997), pp.~739--763,
  \url{https://doi.org/10.1080/00411459708224421}.

\bibitem{MWTLS17}
{\sc X.~Meng, S.~Wang, G.~Tang, J.~Li, and C.~Sun}, {\em Stochastic parameter
  estimation of heterogeneity from crosswell seismic data based on the mont
  carlo radiative transfer theory}, J. Geophys. Eng., 14 (2017), pp.~621--633,
  \url{https://doi.org/10.1088/1742-2140/aa6130}.

\bibitem{PS20}
{\sc O.~Palii and M.~Schlottbom}, {\em On a convergent {DSA} preconditioned
  source iteration for a {DGFEM} method for radiative transfer}, Comput. Math.
  Appl., 79 (2020), pp.~3366--3377,
  \url{https://doi.org/10.1016/j.camwa.2020.02.002}.

\bibitem{PMF20}
{\sc Z.~Peng, R.~G. McClarren, and M.~Frank}, {\em A low-rank method for
  two-dimensional time-dependent radiation transport calculations}, J. Comput.
  Phys., 421 (2020), pp.~109735, 18,
  \url{https://doi.org/10.1016/j.jcp.2020.109735}.

\bibitem{RPBSN18}
{\sc M.~Rustamzhon, D.~Press, G.~K. Baskaran, S.~Sadeghi, and S.~Nizamoglu},
  {\em Unravelling radiative energy transfer in solid-state lighting}, J. Appl.
  Phys., 123 (2018), \url{https://doi.org/10.1063/1.5008922}.

\bibitem{SY17}
{\sc S.~Scholz and H.~Yserentant}, {\em On the approximation of electronic
  wavefunctions by anisotropic {G}auss and {G}auss-{H}ermite functions}, Numer.
  Math., 136 (2017), pp.~841--874,
  \url{https://doi.org/10.1007/s00211-016-0856-4}.

\bibitem{STS17}
{\sc K.~Stamnes, G.~Thomas, and J.~Stamnes}, {\em Radiative transfer in the
  atmosphere and ocean}, Modeling and Simulation in Science, Engineering and
  Technology, Cambridge University Press, 2nd~ed., 2017,
  \url{https://doi.org/10.1017/9781316148549}.

\bibitem{T12}
{\sc C.~Tobler}, {\em Low-rank Tensor Methods for Linear Systems and Eigenvalue
  Problems}, PhD thesis, ETH Z{\"u}rich, 2012.

\bibitem{T50}
{\sc F.~G. Tricomi}, {\em Sugli zeri dei polinomi sferici ed ultrasferici},
  Ann. Mat. Pura Appl. (4), 31 (1950), pp.~93--97,
  \url{https://doi.org/10.1007/BF02428258}.

\bibitem{WHS08}
{\sc G.~Widmer, R.~Hiptmair, and C.~Schwab}, {\em Sparse adaptive finite
  elements for radiative transfer}, J. Comput. Phys., 227 (2008),
  pp.~6071--6105, \url{https://doi.org/10.1016/j.jcp.2008.02.025}.

\bibitem{Y20}
{\sc H.~Yserentant}, {\em On the expansion of solutions of {L}aplace-like
  equations into traces of separable higher dimensional functions}, Numer.
  Math., 146 (2020), pp.~219--238,
  \url{https://doi.org/10.1007/s00211-020-01138-8}.

\bibitem{ZL07}
{\sc J.~M. Zhao and L.~H. Liu}, {\em Second-order radiative transfer equation
  and its properties of numerical solution using the finite-element method},
  Numer. Heat Transf. B, 51 (2007), pp.~391--409,
  \url{https://doi.org/10.1080/10407790600964583}.

\end{thebibliography}

\end{document}